\newtheorem{theorem}{Theorem}   
\newtheorem{lemma}{Lemma}
\newtheorem{proposition}{Proposition}
\newtheorem{example}{Example}
\newtheorem{remark}{Remark}
\newtheorem{definition}{Definition}
\newtheorem{assumption}{Assumption}
\newcommand{\norm}[1]{\left\lVert#1\right\rVert}
\newcommand\inner[2]{\left\langle #1, #2 \right\rangle}
\newcommand{\calH}{\mathcal{H}}
\newcommand{\calU}{\mathcal{U}}
\newcommand{\calV}{\mathcal{V}}
\newcommand{\calW}{\mathcal{W}}
\newcommand{\calY}{\mathcal{Y}}
\newcommand{\calX}{\mathcal{X}}
\newcommand{\calZ}{\mathcal{Z}}
\newcommand{\calL}{\mathcal{L}}
\newcommand{\calB}{\mathcal{B}}
\newcommand{\inv}{^{-1}}
\newcommand{\sq}{^{\frac{1}{2}}}
\def\BibTeX{{\rm B\kern-.05em{\sc i\kern-.025em b}\kern-.08em
    T\kern-.1667em\lower.7ex\hbox{E}\kern-.125emX}}
\begin{document}
\title{Kernel-based models for system analysis}
\author{Henk J. van Waarde and Rodolphe Sepulchre
\thanks{Henk van Waarde is with the Automatic Control Laboratory, ETH Z\"urich, Switzerland. Rodolphe Sepulchre is with the Control Group, University of Cambridge, UK. (hvanwaarde@ethz.ch, rs771@cam.ac.uk). \\
This work was completed while the first author was a postdoctoral research associate at Cambridge. The work was supported by the European Research Council under the Advanced ERC Grant Agreement Switchlet n. 670645.}}

\maketitle

\begin{abstract}
This paper introduces a computational framework to identify nonlinear input-output operators that fit a set of system trajectories while satisfying incremental integral quadratic constraints. The data fitting algorithm is thus regularized by suitable input-output properties required for system analysis  and control design. This biased identification problem is shown to admit the tractable solution of a regularized least squares problem when formulated in a suitable reproducing kernel Hilbert space. The kernel-based framework is a departure from the prevailing state-space framework. It is motivated by  fundamental limitations of nonlinear state-space models at combining the fitting requirements of data-based modeling with the input-output requirements of system analysis and physical modeling. 
\end{abstract}

\begin{IEEEkeywords}
Modeling, system identification, nonlinear systems, machine learning, identification for control.
\end{IEEEkeywords}

\section{Introduction}
\label{s:introduction}

Machine learning  has dramatically improved our ability to
algorithmically train nonlinear models from input-output data. A key challenge
for a broader application of machine learning to system design  
is to make the learning task compatible with system analysis and control
design. It is  one thing to guarantee that a model regresses a given
data set of  input-output trajectories, but it is another thing to
ensure that plugging this model in a feedback loop, or more generally
in an interconnected network,  comes with robustness and performance
guarantees traditionally associated to control system design.
This challenge is not confined to control design. It has pervaded
much recent research in machine learning, with motivations 
including the training of recurrent neural networks \cite{Erichson2021}, that is, neural
networks that include feedback loops, the design of training algorithms
robust  to adversarial attacks \cite{Eykholt2018} and, more generally, 
the ubiquitous need of regularizing data fitting with a priori knowledge \cite{Scholkopf2001}.

The purpose of the present paper is to bridge the data-based learning problem
with the  input-output analysis of control theory. 
Input-output analysis is a classical topic. It was  pioneered in the 1960s by Sandberg  \cite{Sandberg1964}, Zames \cite{Zames1966,Zames1966b} and Willems \cite{Willems1969}.
It advocates to ground the system analysis of an interconnected system in the incremental input-output
properties of the subsystems. One of the building blocks of this theory is the small-gain theorem, that determines the incremental
input-output gain of a feedback interconnection from the incremental gains of each subsystem in the loop. 
The focus  on {\it incremental} system properties, e.g.  {\it incremental} input-output gains,  is central to the present paper. 
Incremental properties hold for arbitrary deviations from {\it arbitrary} trajectories, in contrast to non-incremental properties
that only hold for arbitrary deviations from a {\it specific} trajectory.  Only for linear systems do system properties imply incremental system properties.
The emphasis on incremental properties goes back to the PhD thesis of Zames [7], and the theory of  incremental input-output analysis
is covered in  the classical textbook of Desoer and Vidyasagar \cite{Desoer1975}. However, incremental input-output analysis has lost its importance in the subsequent decades, for reasons further discussed in the paper.

In today's language of system theory, incremental input-output properties are best characterized by {\it incremental}
integral quadratic constraints (iIQCs), which express the positivity of a suitable inner product between {\it differences} of
input and output trajectories. Those constraints have a close relationship with the monotonicity and nonexpansiveness properties
of operator theory \cite{Bauschke2011}, which have become central to analysis and design questions in convex optimization \cite{Ryu2022}. 
IQCs were introduced in the seminal paper \cite{Megretski1997}, where they were shown to provide a unifying and computationally tractable framework for system analysis.
To date, they have been mostly used to study non-incremental system properties. The paper \cite{Kulkarni2002} is an early caveat of the non-trivial gap that separates IQC analysis from  incremental IQC analysis. 

The objective of the present paper is to explore a potential bridge between data-based modeling and system analysis. 
We address the basic question of fitting a given data set of input-output trajectories with a model
that satisfies a given incremental integral quadratic constraint. Our main result is to demonstrate that this question has a tractable answer within
the theory of reproducing kernel Hilbert spaces \cite{Aronszajn1950,Scholkopf2001,Micchelli2005}.  Via a specific linear (scattering) transformation, the general problem is reformulated as the 
specific problem of learning a model with small incremental gain, which itself boils down to a standard regularized least squares problem within a suitable Hilbert space. 
The significance of the result is that it combines the computationally tractable framework of kernel-based methods with the versatility
and generality of system analysis via incremental integral quadratic constraints. 

Kernel-based modeling is a departure from the state-space modeling framework that currently
prevails in nonlinear system theory and in data-based learning. This departure is motivated
by important limitations of nonlinear state-space models to provide
dynamical models that encode suitable incremental input-output
properties. This limitation is not widely acknowledged, but it is
particularly explicit in circuit theory, where fundamental physical input-output properties such as incremental passivity
fail to be retained in the simplest state-space models of nonlinear circuit elements. The difficulty of applying system analysis to the biophysical nonlinear
circuit models of neuroscience is in fact a very motivation of the present paper. It will be further illustrated by examples in the paper.

The use of kernel-based models in system theory is not novel. The theory of reproducing kernel spaces has been primarily exploited in  system identification
 \cite{Pillonetto2010, Pillonetto2014}. Those results have led to a renaissance of the field 
\cite{Pillonetto2011,Dinuzzo2015,Bottegal2016,
Bouvrie2017,Bouvrie2017b,Ramaswamy2018,
Khosravi2019,Dallalibera2021} and are today considered as a paradigm shift \cite{Ljung2020}. 
But the use of kernel-based system identification in system analysis and control design
has so far remained limited because it faces the same challenge as machine learning: identified models can serve control design
only if they come with suitable input-output properties. The present paper provides an avenue to achieve this objective, indicating
which types of kernels are suitable for system analysis. Incidentally, the most successful stable spline kernels of system identification \cite{Pillonetto2014} 
do not meet the required assumptions of the present paper, motivating new developments to bridge kernel-based system identification
and control design, that is, to advance the important topic of {\it identification for control} \cite{Gevers1995,Lindqvist2001,Bombois2004,Gevers2005}.

Contraction theory, pioneered in
the seminal paper \cite{Lohmiller1998}, has been the paradigm of choice to study incremental properties of {\it closed} dynamical systems, described by
ordinary differential equations without inputs. While contraction theory has been very successful at solving an increasing set of
nonlinear control problems,  we currently lack an analog framework for input-output analysis. This is in sharp contrast with the theory of linear systems,
where dissipativity theory provides the required bridge between input-output properties  and their state-space realization. 
Again, it is the limited success of  {\it incremental} dissipativity theory in addressing that objective to date that motivates the kernel-based framework of this paper. 
Kernels are regarded as a surrogate of state-space models to provide a tractable computational framework for system analysis
of incremental properties.

The organization of this paper is as follows. In Section~\ref{s:preliminaries} we give an overview of the notation, and we provide preliminary definitions on integral quadratic constraints and dissipativity (Section~\ref{s:preliminariesdissipativity}). In Section~\ref{s:preliminariesscattering} we explain how incremental integral quadratic constraints for arbitrary quadratic supply rates can be reduced to incremental gain properties via \emph{scattering} \cite{vanderSchaft2017}. We also point out the relation to nonexpansive and monotone operators in Section~\ref{s:preliminariesmonotone}. 

Next, in Section~\ref{s:problem} we state the problem: to identify operators from data that satisfy causality properties and incremental integral quadratic constraints. Section~\ref{s:regularizedleastsquares} then discusses the solution concept, namely to solve a regularized least squares problem over a suitable reproducing kernel Hilbert space. 

The subsequent sections contain several results that show how particular choices of reproducing kernels and regularization parameters ensure that the least squares solution satisfies given system-theoretic properties. For example, Section~\ref{s:nonexpansive} focuses on the nonexpansiveness property. Section~\ref{s:nonexpansivetodissipative} shows how to recover operators satisfying incremental integral quadratic constraints from nonexpansive ones. Thereafter, Section~\ref{s:causalandincdis} discusses the selection of kernels for causal and incrementally dissipative operators. In Section~\ref{s:potassiumcurrent} we apply the results of previous sections to the identification of a monotone model of the potassium current. Also kernels for non-incremental system-theoretic properties are discussed in Section~\ref{s:otherproperties}. 
In Section~\ref{s:conclusions} we provide our conclusions. Finally, in the Appendix we give an extensive review of relevant results within the theory of reproducing kernel Hilbert spaces of operators. 

\section{Preliminaries}
\label{s:preliminaries}

Let $\calX$ and $\calZ$ be real Banach spaces with norms $\norm{\cdot}_\calX$ and $\norm{\cdot}_\calZ$, respectively. We denote the collection of all \emph{linear operators} from $\calX$ to $\calZ$ by $\calL(\calX,\calZ)$. The class of \emph{bounded} linear operators from $\calX$ to $\calZ$ is denoted by $\calB(\calX,\calZ)$, and consists of all $A \in \calL(\calX,\calZ)$ for which there exists a constant $c \in \mathbb{R}$ such that $\norm{A(x)}_\calZ \leq c \norm{x}_\calX$. We denote the operator norm of $A \in \calB(\calX,\calZ)$ by $\norm{A}_{\calB(\calX,\calZ)}$. If $\calX = \calZ$ we simply use the notation $\calL(\calX)$ and $\calB(\calX)$. The identity operator in $\calB(\calX)$ is denoted by $I$. 

Next, let $\calX, \calZ$ be real Hilbert spaces with inner product $\inner{\cdot}{\cdot}_\calX$ and $\inner{\cdot}{\cdot}_\calZ$. We use $A^*$ to denote the \emph{adjoint} of $A \in \calB(\calX,\calZ)$, i.e., $A^*$ is the unique operator in $\calB(\calZ,\calX)$ for which
$$
\inner{y}{A(x)}_\calZ = \inner{A^*(y)}{x}_\calX
$$
for all $x\in\calX$ and $y\in \calZ$. An operator $A\in\calB(\calX)$ is called \emph{self-adjoint} if $A = A^*$. It is called \emph{positive} if $\inner{x}{Ax}_\calX \geq 0$ for all $x \in \calX$. If $A \in \calB(\calX)$ is a self-adjoint positive operator then there exists a unique self-adjoint positive $A^{\frac{1}{2}} \in \calB(\calX)$, called the \emph{square root}, such that $A = A^{\frac{1}{2}}A^{\frac{1}{2}}$ \cite[p. 265]{Riesz1956}.

Let $\mathbb{R}^+$ denote the nonnegative reals and let $\calV$ be a finite dimensional inner product space with inner product $\inner{\cdot}{\cdot}_\calV$ and induced norm $\norm{\cdot}_\calV$. The set of \emph{square integrable functions} $L_2(\mathbb{R}^+,\calV)$ consists of all measurable $f: \mathbb{R}^+ \to \calV$ such that 
\begin{equation}
\label{L2definition}
\int_{0}^\infty \norm{f(t)}_\calV^2 dt < \infty.
\end{equation}
By identifying functions that are equal except for a set of Lebesgue measure zero, $L_2(\mathbb{R}^+,\calV)$ becomes a Hilbert space with inner product  
$$
\inner{f}{g}_{L_2(\mathbb{R}^+,\calV)} := \int_0^\infty \inner{f(t)}{g(t)}_\calV dt.
$$
The space $L_2([0,\tau],\calV)$ can be defined similarly, by replacing the improper integral \eqref{L2definition} by an integral from $0$ to $\tau$. The Hilbert space of \emph{square summable sequences} $\ell_2(\mathbb{N},\calV)$ consists of all sequences $s:\mathbb{N} \to \calV$ such that 
\begin{equation}
\label{definitionl2}
\sum_{k = 0}^\infty \norm{s(k)}_\calV^2 < \infty.
\end{equation}
The space $\ell_2(\{0,\dots,\tau\},\calV)$ can be defined similarly. Sometimes we will drop the arguments and simply write $L_2$ and $\ell_2$ when this does not lead to confusion.

\subsection{Integral quadratic constraints and dissipativity}
\label{s:preliminariesdissipativity}

We denote time by $\mathbb{T}$, which we assume to be either the half line $\mathbb{R}^+$ or $[0,\tau]$ for continuous-time, or the natural numbers $\mathbb{N}$ or $\{0,1,\dots,\tau\}$ in the discrete case. Compatible with $\mathbb{T}$, we let $\calU$ be either the space $L_2(\mathbb{T},\mathbb{R}^m)$ in continuous-time or $\ell_2(\mathbb{T},\mathbb{R}^m)$ in discrete-time. Similarly, $\calY$ is either $L_2(\mathbb{T},\mathbb{R}^p)$ or $\ell_2(\mathbb{T},\mathbb{R}^p)$.
For $f:\mathbb{T} \to \mathbb{R}^k$ we will use $P_T f : \mathbb{T} \to \mathbb{R}^k$ to denote the truncated signal 
\begin{equation}
\label{definitionPT}
P_T f(t) = \begin{cases} f(t) & \text{ if } 0 \leq t \leq T \\
0 & \text{ if } t > T. \end{cases} 
\end{equation} 
Note that since $\calU$ is either $L_2$ or $\ell_2$, we have that $P_T u \in \calU$ for all $T \in \mathbb{T}$ and $u \in \calU$. To define the relevant notions of dissipativity, we introduce the \emph{supply rate} $s_\Phi : \mathbb{R}^m \times \mathbb{R}^p \to \mathbb{R}$ as 
\begin{equation}
\label{supplyfunction}
s_\Phi(x_1,x_2) := \begin{bmatrix}
x_1 \\ x_2
\end{bmatrix}^\top \Phi \begin{bmatrix}
x_1 \\ x_2
\end{bmatrix},
\end{equation}
where $\Phi \in \mathbb{R}^{(m+p)\times(m+p)}$ is symmetric. 
\begin{definition}
\label{d:allsystemproperties}
Consider the supply rate $s_\Phi$ in \eqref{supplyfunction} and the operator $H:\calU\to\calY$. We say that $H$:
\begin{enumerate}[label=(\roman*)]
\item \label{i:incavnonneg} satisfies an \emph{incremental integral quadratic constraint} if\footnote{In discrete-time, all integrals of Definition~\ref{d:allsystemproperties} should be replaced by sums and the terminology ``sum quadratic constraint" would be more appropriate.}
\begin{equation}
\label{incrementalaveragenonnegativity}
\int_{\mathbb{T}} s_\Phi(u(t)-v(t),y(t)-z(t)) \: dt \geq 0,
\end{equation}
for all $u,v \in \calU$ and $(y,z) = (H(u),H(v))$.
\item \label{i:incdis} is \emph{incrementally dissipative} if
\begin{equation}
\label{incrementaldissipativity}
\int_0^T s_\Phi(u(t)-v(t),y(t)-z(t)) \: dt \geq 0
\end{equation} 
for all $T \in \mathbb{T}$, all $u,v\in\calU$ and $(y,z) = (H(u),H(v))$.
\item \label{i:causal} is \emph{causal} if 
\begin{equation}
\label{causal}
P_T(H(u)) = P_T(H(P_T u ))
\end{equation} 
for all $u \in \calU$ and $T \in \mathbb{T}$.
\item \label{i:avnonneg} satisfies an \emph{integral quadratic constraint} if
\begin{equation}
\label{averagenonnegativity}
\int_{\mathbb{T}} s_\Phi(u(t),y(t)) \: dt \geq 0,
\end{equation}
for all $u \in \calU$ and $y = H(u)$.
\item \label{i:dis} is \emph{dissipative} if
\begin{equation}
\label{dissipativity}
\int_0^T s_\Phi(u(t),y(t)) \: dt \geq 0
\end{equation}
for all $T \in \mathbb{T}$, all $u\in\calU$ and $y = H(u)$.
\end{enumerate}
\end{definition}

It is straightforward to prove that for causal operators $H$, the properties \ref{i:incavnonneg} and \ref{i:incdis}, and the properties \ref{i:avnonneg} and \ref{i:dis} are equivalent, see related arguments in \cite{Desoer1975,vanderSchaft2017}. In the special case that $m = p$ and
\begin{equation}
\label{supplypassivity}
\Phi = \begin{bmatrix}
0 & I \\ I & 0
\end{bmatrix},
\end{equation} 
(incremental) dissipativity boils down to the notion of \emph{(incremental) passivity}, see e.g. \cite[p. 184]{Desoer1975}. Moreover, in the special case that 
$$
\Phi = \begin{bmatrix}
\delta I & 0 \\ 0 & -I
\end{bmatrix}
$$
for $\delta > 0$, the properties \ref{i:incdis} and \ref{i:dis} retrieve the notions of \emph{finite (incremental) $L_2$-gain}, see \cite[Def. 2.1.5]{vanderSchaft2017}. We note that we consider $L_2$ or $\ell_2$ as input and output spaces instead of their extended spaces, as done in \cite{Desoer1975,vanderSchaft2017}. This has the disadvantage of allowing a somewhat smaller class of signals but, as we will see, it enables the application of the powerful theory of reproducing kernels (that requires Hilbert spaces). The terminology of integral quadratic constraint (IQC) originates from Megretski and Rantzer \cite{Megretski1997}, see also \cite{Veenman2013,Lessard2016}. We additionally point out that for particular choices of $\Phi$, \eqref{incrementalaveragenonnegativity} can be regarded as a \emph{monotonicity} or \emph{nonexpansiveness} property of the operator $H$ on the Hilbert space $\calU$; these concepts arise naturally in convex analysis and optimization \cite{Rockafellar2009,Bauschke2011,Ryu2022} and we will discuss this further in Section~\ref{s:preliminariesmonotone}.

\subsection{Scattering}
\label{s:preliminariesscattering}

We make the following blanket assumption on $\Phi$.

\begin{assumption}
\label{a:Phi}
The symmetric matrix $\Phi \in \mathbb{R}^{(m+p)\times(m+p)}$ is nonsingular, has $p$ negative eigenvalues and (thus) $m$ positive eigenvalues.
\end{assumption}

Note that both examples of $\Phi$ mentioned in Section~\ref{s:preliminariesdissipativity} satisfy Assumption~\ref{a:Phi}. This allows us to factor $\Phi$ as 
\begin{equation}
\label{factorPhi}
\Phi = M^\top \underbrace{\begin{bmatrix}
I & 0 \\ 0 & -I
\end{bmatrix}}_{=:\Sigma}
M,
\end{equation} 
where $M \in \mathbb{R}^{(m+p)\times(m+p)}$ is a nonsingular matrix. By Assumption~\ref{a:Phi}, the identity in $\Sigma$ is of dimension $m$, while the minus identity block has dimension $p$. Compatible with the partitioning of $\Sigma$, we partition $M$ as 
\begin{equation}
\label{matrixM}
M = \begin{bmatrix}
M_{11} & M_{12} \\ M_{21} & M_{22}
\end{bmatrix},
\end{equation}
where $M_{11}\in\mathbb{R}^{m\times m}$, $M_{12} \in \mathbb{R}^{m \times p}$, $M_{21} \in \mathbb{R}^{p \times m}$ and $M_{22} \in \mathbb{R}^{p \times p}$. With some abuse of terminology, we regard $M_{11}$ as an operator on $\calU$ by defining $M_{11}f \in \calU$ as the function $t \mapsto M_{11}f(t)$ for all $t \in \mathbb{T}$. Similar terminology is used for $M_{21}$, $M_{12}$ and $M_{22}$. 

\begin{proposition}
\label{p:scattering}
Consider $\Phi$ as in Assumption~\ref{a:Phi} and factored as in \eqref{factorPhi}. Let $R: \calU \to \calY$ be such that $M_{11}+M_{12}R$ is invertible. Define the operator $S: \calU \to \calY$ as
\begin{equation}
\label{operatorS}
S := (M_{21}+M_{22}R)(M_{11}+M_{12}R)\inv.
\end{equation}
Let $(u,y),(v,z) \in \calU \times \calY$ be related by
$(v,z) = M(u,y)$. 
Then we have that
\begin{enumerate}[label=(\roman*)]
\item \label{i:MN} $N_{11}+N_{12}S$ is invertible and
\begin{equation}
\label{operatorT}
R = (N_{21}+N_{22}S)(N_{11}+N_{12}S)\inv,
\end{equation} 
where 
\begin{equation}
\label{matrixN}
N = \begin{bmatrix}
N_{11} & N_{12} \\
N_{21} & N_{22}
\end{bmatrix} := M\inv
\end{equation}
is partitioned compatible with \eqref{matrixM}. 
\item \label{i:yTu} $y = R(u)$ if and only if $z = S(v)$.
\item \label{i:TsPhi} $R$ satisfies \eqref{incrementalaveragenonnegativity} with supply rate $s_\Phi$ if and only if $S$ satisfies \eqref{incrementalaveragenonnegativity} with supply rate $s_\Sigma$.
\end{enumerate}
\end{proposition}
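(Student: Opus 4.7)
The plan is to treat all three parts as consequences of the single algebraic fact $NM = I$, which splits into four block identities ($N_{11}M_{11}+N_{12}M_{21}=I$, $N_{21}M_{11}+N_{22}M_{21}=0$, $N_{11}M_{12}+N_{12}M_{22}=0$, $N_{21}M_{12}+N_{22}M_{22}=I$). Because $M_{ij}$ and $N_{ij}$ act pointwise, these identities carry over verbatim to the (possibly nonlinear) composite operators on $\calU$ and $\calY$, so the whole proof reduces to bookkeeping with these four relations together with the definition \eqref{operatorS}. I do not expect any genuine obstacle; the only thing to watch is that $R$ is not assumed linear, so every manipulation must go through evaluation on an argument, not through formal operator algebra.

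For \ref{i:MN}, I would pick an arbitrary $v\in\calU$, define $u:=(M_{11}+M_{12}R)\inv(v)$, and then observe that by \eqref{operatorS} we have $S(v)=M_{21}u+M_{22}R(u)$ while $v=M_{11}u+M_{12}R(u)$. Substituting these into $(N_{11}+N_{12}S)(v)$ and collecting the coefficients of $u$ and $R(u)$ yields exactly $u$ by the first two block identities of $NM=I$. This shows $(N_{11}+N_{12}S)=(M_{11}+M_{12}R)\inv$, which in particular proves invertibility. An analogous substitution into $(N_{21}+N_{22}S)(v)$, using now the last two block identities, gives $R(u)$, and composing with $(N_{11}+N_{12}S)\inv=M_{11}+M_{12}R$ yields \eqref{operatorT}.

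For \ref{i:yTu}, the forward direction is immediate: if $y=R(u)$ then $v=(M_{11}+M_{12}R)(u)$ and $z=(M_{21}+M_{22}R)(u)$, so that $z=S(v)$ by \eqref{operatorS}. The converse uses part \ref{i:MN}: since $(v,z)=M(u,y)$ is equivalent to $(u,y)=N(v,z)$, the same argument with the roles of $M,R$ and $N,S$ swapped gives $y=R(u)$ from $z=S(v)$.

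For \ref{i:TsPhi}, I would note that the factorization \eqref{factorPhi} together with $(v,z)=M(u,y)$ yields the pointwise identity
\begin{equation*}
s_\Phi\bigl(u(t)-u'(t),y(t)-y'(t)\bigr)=s_\Sigma\bigl(v(t)-v'(t),z(t)-z'(t)\bigr)
\end{equation*}
for any two trajectory pairs, using linearity of $M$ on differences. Integrating over $\mathbb{T}$ converts the iIQC in $s_\Phi$ for $R$ into the iIQC in $s_\Sigma$ for $S$. To conclude the equivalence I use part \ref{i:MN}, which tells me that the map $u\mapsto v=(M_{11}+M_{12}R)(u)$ is a bijection of $\calU$ onto itself, so quantifying over all $(u,u')\in\calU\times\calU$ for $R$ is the same as quantifying over all $(v,v')\in\calU\times\calU$ for $S$ via part \ref{i:yTu}.
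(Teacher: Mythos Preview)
Your proposal is correct and follows essentially the same route as the paper. The only difference is presentational: the paper packages your four block identities into the single relation $\begin{bmatrix} I \\ S \end{bmatrix} = M \begin{bmatrix} I \\ R \end{bmatrix}(M_{11}+M_{12}R)\inv$ and multiplies through by $N$, whereas you expand this componentwise; both arrive at $(N_{11}+N_{12}S)=(M_{11}+M_{12}R)\inv$ and proceed identically from there. One small slip: in part \ref{i:MN} the identities you actually use to get $u$ are the first and \emph{third} in your list (the top-row blocks of $NM=I$), not ``the first two''.
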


\begin{proof}
We first prove \ref{i:MN}. By definition,
$$
\begin{bmatrix}
I \\ S
\end{bmatrix} = M\begin{bmatrix}
I \\ R
\end{bmatrix} (M_{11}+M_{12}R)\inv,
$$
and thus
\begin{equation}
\label{relationNMST}
N\begin{bmatrix}
I \\ S
\end{bmatrix} = \begin{bmatrix}
I \\ R
\end{bmatrix} (M_{11}+M_{12}R)\inv.
\end{equation}
The first block of \eqref{relationNMST} yields $N_{11}+N_{12}S = (M_{11}+M_{12}R)\inv$ so we conclude that $N_{11}+N_{12}S$ is invertible. Moreover, from the second block, we conclude that $R$ satisfies \eqref{operatorT}.

Next, we prove \ref{i:yTu}. Suppose that $y = R(u)$. Then $v = M_{11}u+M_{12}R(u)$ so that $(M_{11}+M_{12}R)\inv(v) = u$. Thus, $z = (M_{21}+M_{22}R)(u) = S(v)$. 

Conversely, suppose that $z = S(v)$. By definition of $v$ and $z$, and by \eqref{relationNMST} we have
$$
\begin{bmatrix} u \\ y \end{bmatrix} = N\begin{bmatrix}
v \\ z
\end{bmatrix} = \begin{bmatrix}
I \\ R
\end{bmatrix} (M_{11}+M_{12}R)\inv(v),
$$
and therefore $y = R(u)$. 

Finally, we note that \ref{i:TsPhi} follows directly from \ref{i:yTu} using the relation \eqref{factorPhi}. This proves the proposition.
\end{proof}

\begin{remark}
If $M$ is given by
\begin{equation}
\label{Mpassivity}
M = \frac{\sqrt{2}}{2} \begin{bmatrix}
I & I \\ I & -I
\end{bmatrix}
\end{equation}
it is often referred to as the \emph{scattering transform}, and in this case $S = (I-R)(I+R)\inv$ is called the \emph{scattering operator} of $R$. The relation between (incremental) passivity of $R$ and (incremental) $L_2$-gain of its scattering operator is classical, see e.g. \cite[Sec. 2.4]{vanderSchaft2017}.
\end{remark}

\subsection{Nonexpansive and monotone operators}
\label{s:preliminariesmonotone}
Note that $S$ satisfies \eqref{incrementalaveragenonnegativity} with supply $s_\Sigma$ if and only if 
\begin{equation}
\label{nonexpansiveL2}
\norm{S(u)-S(v)}_{\calY} \leq \norm{u-v}_{\calU}
\end{equation}
for all $u,v \in \calU$. Clearly, the inequality \eqref{nonexpansiveL2} can be considered for operators on general normed spaces, not just $L_2$ or $\ell_2$. In convex analysis and optimization, \eqref{nonexpansiveL2} is typically referred to as a \emph{nonexpansiveness} property, see e.g. \cite{Bauschke2011}. We recall the relevant definitions in what follows. 

\begin{definition}
\label{d:nonexpansive}
Let $\calX$ and $\calZ$ be Banach spaces. The operator $R : D \subseteq \calX \to \calZ$ is 
\begin{itemize}
\item \emph{Lipschitz continuous} if there exists a nonnegative constant $\ell \in \mathbb{R}$ such that 
$$
\norm{R(x)-R(y)}_\calZ \leq \ell \norm{x-y}_\calX
$$
for all $x,y \in D$.
\item \emph{Nonexpansive} if it is Lipschitz continuous with constant $\ell = 1$, i.e.,
$$
\norm{R(x)-R(y)}_\calZ \leq \norm{x-y}_\calX
$$
for all $x,y \in D$.
\item \emph{Contractive} if it is Lipschitz continuous with $\ell < 1$.
\end{itemize}
If $\calX = \calZ$ then $R$ is called
\begin{itemize}  
\item \emph{Firmly nonexpansive} if 
\begin{align*}
&\norm{R(x)-R(y)}^2_\calX + \norm{(I-R)(x)-(I-R)(y)}^2_\calX \\ \leq &\norm{x-y}^2_\calX
\end{align*}
for all $x,y \in D$. 
\end{itemize}
If, in addition, $\calX$ is a Hilbert space then $R$ is called 
\begin{itemize}
\item \emph{monotone} if 
$$
\inner{x-y}{R(x)-R(y)}_\calX \geq 0
$$
for all $x,y \in D$. 
\end{itemize}
\end{definition}

Nonexpansiveness and monotonicity\footnote{Note that we focus on the operator-theoretic notion of monotonicity. Other concepts of monotonicity arise in the context of state-space systems, which has recently received attention in data-based studies as well \cite{Kawano2020,Revay2021}.} are important concepts in convex optimization. Indeed, the subdifferential of a proper convex function is monotone. Under suitable conditions, this implies that gradient descent algorithms converge \cite{Ryu2022}, which can be shown by resorting to nonexpansiveness properties of the involved operator. It should be clear that monotonicity boils down to an incremental IQC with passive supply \eqref{supplypassivity} in the special case of $\calX = L_2$. Within monotone operator theory, the relation between monotonicity and nonexpansiveness of the ``scattering operator" is well-known, and this forms an analogy to the discussion in Section~\ref{s:preliminariesscattering}. In fact, if $I+R$ is invertible then by \cite[Prop. 23.9(ii)]{Bauschke2011}, $R$ is monotone if and only if its resolvent $(I+R)\inv$ is firmly nonexpansive. Moreover, by \cite[Prop. 4.2(i)-(iii)]{Bauschke2011}, $(I+R)\inv$ is firmly nonexpansive if and only if $(I-R)(I+R)\inv$ is nonexpansive.

\section{Problem formulation}
\label{s:problem}

Let $\mathbb{T}$, $\calU$ and $\calY$ be as in Section~\ref{s:preliminariesdissipativity}. Consider a data set consisting of a collection of $n$ pairs of trajectories:
\begin{equation}
\label{data}
(u_i,y_i) \in \calU\times \calY \text{ for } i = 1,2,\dots,n.
\end{equation}
At a high level, the problem considered in this paper is the following: find an operator $\hat{H}:\calU \to \calY$ that fits the data, i.e., approximately maps each $u_i$ to $y_i$, while ensuring that $\hat{H}$ satisfies a \emph{system-theoretic property} $\Pi$. 

Given their importance in nonlinear system analysis, we will focus on incremental system properties. This means that $\Pi$ is chosen as one of the properties\footnote{In Section~\ref{s:otherproperties}, we will also briefly discuss the case of (non-incremental) dissipativity, i.e., items \ref{i:avnonneg} and \ref{i:dis} of Definition~\ref{d:allsystemproperties}.} \ref{i:incavnonneg}, \ref{i:incdis} or \ref{i:causal} of Definition~\ref{d:allsystemproperties}. The proposed problem is thus to bias the identification task in such a way that the identified model satisfies an incremental IQC, or incremental dissipativity/causality properties.

One of the motivations of this problem is to incorporate prior knowledge of the physics of the plant in the identification process. In biophysical systems, such prior knowledge appears in the context of the Hodgkin-Huxley system \cite{Hodgkin1952}, describing the electrical characteristics of an excitable cell. All experimental data point towards the \emph{monotonicity} of the so-called potassium ion channel of this system. Nonetheless, it turns out that existing state-space models of this system \cite{Hodgkin1952} are \emph{not} monotone, a fact that we prove in Section~\ref{s:potassiumcurrent}. As we will see, the results of this paper are directly applicable to identify monotone systems.

Another philosophy behind the proposed problem is that we wish to estimate models that are directly useful for system analysis and control. Dissipativity is a cornerstone of the analysis of feedback systems \cite{Desoer1975} and plays an important role in the stability and optimality of model predictive controllers \cite{Diehl2011,Muller2015}. The considered problem is therefore well-aligned with the \emph{identification for control} movement \cite{Gevers1995,Lindqvist2001,Bombois2004,Gevers2005}, that recognized the need for \emph{goal-oriented} identification.   

We summarize the problem as follows.\\
\noindent 
\textbf{Problem}: Consider the data \eqref{data} and let $\Pi$ be a system-theoretic property. Compute an operator $\hat{H} : \calU \to \calY$ that
\begin{enumerate}[label=(\roman*)]
\item fits the data, i.e., ensures that the error
$$
\sum_{i=1}^n \norm{y_i - \hat{H}(u_i)}_{\calY}
$$
is small; and
\item satisfies the property $\Pi$. \label{i:satisfynonnegative}
\end{enumerate}

We will formalize the notion of ``small error" by introducing a suitable cost functional in Section~\ref{s:regularizedleastsquares}.

\section{Regularized least squares}
\label{s:regularizedleastsquares}

By Proposition~\ref{p:scattering}\ref{i:yTu} and \ref{i:TsPhi}, the problem of identifying an operator satisfying \eqref{incrementalaveragenonnegativity} is equivalent to identifying a nonexpansive operator using transformed data samples. As such, the problem described in Section~\ref{s:problem} boils down to devising an identification approach that can take into account nonexpansiveness and causality properties. 

We will provide such an approach by resorting to the theory of \emph{reproducing kernels}, see e.g. \cite{Aronszajn1950,Scholkopf2001,Micchelli2005}. This means that we seek an operator $\hat{H} \in \calH$, where $\calH$ is a so-called \emph{reproducing kernel Hilbert space} \emph{(RKHS)}. We refer to the Appendix for a detailed review of reproducing kernel Hilbert spaces of operators. Within the theory of RKHSs, a well-established identification approach \cite{Pillonetto2010,Pillonetto2011,
Dinuzzo2015,Bottegal2016,Bouvrie2017,
Ramaswamy2018,Blanken2020,Hamzi2021} is to select $\hat{H}$ as the solution to the \emph{regularized least squares problem}
\begin{equation}
\label{regLSpf}
\min_{H \in \calH} \left( \sum_{i=1}^n \norm{y_i - H(u_i)}^2_\calY + \gamma \norm{H}_\calH^2 \right).
\end{equation}
Regularized least squares (and more generally, kernel methods) play an important role in machine learning algorithms such as support vector machines \cite{Boser1992,Suykens1999,Scholkopf2001}. Note that the first term in \eqref{regLSpf} promotes a good fit of the data (i.e., a low \emph{empirical risk}). The second term promotes a ``low complexity" model, in the form of an operator with small norm. The parameter $\gamma > 0$ balances between these two terms. An additional benefit of regularization is well-posedness of the problem \eqref{regLSpf}: its solution exists and is unique for any RKHS $\calH$ and any $\gamma >0$, see Appendix~\ref{s:regLS}. 

Once we have agreed upon the identification approach \eqref{regLSpf}, the problem becomes how to choose $\calH$ and $\gamma$ so that the estimated operator $\hat{H}$ is causal and/or nonexpansive. The main contributions of this paper are to prove that such choices exist, and to provide concrete guidelines on how to select such spaces and parameters.

\section{Identifying nonexpansive operators}
\label{s:nonexpansive}

In this section we introduce an approach to identify nonexpansive operators using the theory of reproducing kernels. Throughout this section, we let $\calU$ and $\calY$ be real Hilbert spaces. We note that relevant special cases of this are the spaces $L_2$ and $\ell_2$ discussed in Section~\ref{s:preliminariesdissipativity}. In addition, let $\calH$ be a real reproducing kernel Hilbert space of operators from $\calU$ to $\calY$. We remind the reader that all relevant definitions and results about RKHSs can be found in the Appendix. Towards our goal of identifying nonexpansive operators, the following lemma is instrumental. It can be interpreted as the incremental version of Lemma~\ref{l:boundHu}\ref{l:boundHub} in Appendix~\ref{s:basicdefinitionsRKHS}.

\begin{lemma}
\label{l:boundHuHv}
Let $\calH$ be a reproducing kernel Hilbert space of operators from $\calU$ to $\calY$. Denote its kernel by $K\!:\!\calU \times \calU \!\to\! \calB(\calY)$. For all $u,v \in \calU$ and all $H \in \calH$ we have that 
\begin{equation}
\label{ineqHuHv}
\begin{aligned}
\!\!\!\!\!&\norm{H(u) - H(v)}_\calY \leq \\ \!\!\!\!\!&\norm{H}_\calH \norm{K(u,u)-K(u,v)-K(v,u)+K(v,v)}_{\calB(\calY)}^{\frac{1}{2}}.
\end{aligned}
\end{equation}
\end{lemma}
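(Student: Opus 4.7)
The plan is to mimic the non-incremental bound of Lemma~\ref{l:boundHu}\ref{l:boundHub} from the appendix, but applied to the difference $w := H(u) - H(v) \in \calY$, and then to exploit the operator-valued reproducing property twice: once to push $w$ into an inner product on $\calH$, and once to evaluate the resulting $\calH$-norm in terms of the kernel values at the four pairs $(u,u),(u,v),(v,u),(v,v)$.

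First, assume $w \neq 0$ (otherwise the inequality is trivial). Using the reproducing property $\langle H(u), y\rangle_\calY = \langle H, K(\cdot,u) y\rangle_\calH$ that holds for every $H \in \calH$, $u\in\calU$, $y\in\calY$, I would write
\begin{equation*}
\norm{w}_\calY^2 = \inner{H(u)-H(v)}{w}_\calY = \inner{H}{\bigl(K(\cdot,u) - K(\cdot,v)\bigr) w}_\calH.
\end{equation*}
Applying the Cauchy--Schwarz inequality in $\calH$ then yields
\begin{equation*}
\norm{w}_\calY^2 \leq \norm{H}_\calH \, \bigl\lVert \bigl(K(\cdot,u) - K(\cdot,v)\bigr) w\bigr\rVert_\calH.
\end{equation*}

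Next, I would evaluate the $\calH$-norm on the right. Using the identity $\inner{K(\cdot,u')a}{K(\cdot,u'')b}_\calH = \inner{a}{K(u',u'') b}_\calY$ (which is itself a direct consequence of the reproducing property, applied to the element $H = K(\cdot,u'')b\in\calH$), the four cross terms expand to
\begin{equation*}
\bigl\lVert \bigl(K(\cdot,u) - K(\cdot,v)\bigr) w\bigr\rVert_\calH^2 = \inner{w}{\bigl(K(u,u)-K(u,v)-K(v,u)+K(v,v)\bigr) w}_\calY.
\end{equation*}
Bounding this inner product by the operator norm of the combination, we get
\begin{equation*}
\bigl\lVert \bigl(K(\cdot,u) - K(\cdot,v)\bigr) w\bigr\rVert_\calH \leq \norm{K(u,u)-K(u,v)-K(v,u)+K(v,v)}_{\calB(\calY)}^{\frac{1}{2}} \norm{w}_\calY.
\end{equation*}
Substituting back and dividing both sides by $\norm{w}_\calY$ delivers the claimed inequality \eqref{ineqHuHv}.

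No step is really difficult; the only bookkeeping subtlety is keeping straight which side of the kernel takes the $\calY$-vector and which takes the $\calU$-point. The combination $K(u,u)-K(u,v)-K(v,u)+K(v,v)$ is automatically self-adjoint and positive on $\calY$ (it is the Gram of $K(\cdot,u)-K(\cdot,v)$ via the identity above), so its operator norm and its square root behave as required, and Assumption-free boundedness on $\calY$ comes from the definition of an operator-valued kernel.
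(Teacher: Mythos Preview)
Your proof is correct and follows essentially the same route as the paper: apply the reproducing property to $w=H(u)-H(v)$, use Cauchy--Schwarz in $\calH$, and then expand $\norm{(K(\cdot,u)-K(\cdot,v))w}_\calH^2$ via the reproducing property into the four kernel terms. The only cosmetic difference is that the paper introduces the operator $K_u\in\calB(\calY,\calH)$ and passes through the intermediate bound $\norm{K_u-K_v}_{\calB(\calY,\calH)}$, whereas you work directly with the specific vector $w$; the underlying argument is identical.
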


\vspace{2mm}

\begin{proof}
For any $u \in \calU$, define the linear operator $K_u : y \mapsto K(\cdot,u)y$ as in Lemma~\ref{l:boundHu}. We know from the proof of Lemma~\ref{l:boundHu} that $K_u \in \calB(\calY,\calH)$. Therefore, for any $u,v\in\calU$, 
\begin{equation*}
\begin{aligned}
\inner{H(u)-H(v)}{H(u)-H(v)}_\calY &= \\ \inner{H(u)-H(v)}{H(u)}_\calY - \inner{H(u)-H(v)}{H(v)}_\calY 
&= \\ \inner{H}{K_u(H(u)-H(v))}_\calH - \inner{H}{K_v(H(u)-H(v))}_\calH 
&= \\ \inner{H}{(K_u-K_v)(H(u)-H(v))}_\calH 
&\leq \\ \norm{H}_\calH \norm{K_u-K_v}_{\calB(\calY,\calH)} \norm{H(u)-H(v)}_\calY,
\end{aligned}
\end{equation*}
where we have used the reproducing property \eqref{repprop} for the second equality. This implies that
\begin{equation}
\label{ineqHuHv2}
\norm{H(u)-H(v)}_\calY \leq \norm{H}_\calH \norm{K_u-K_v}_{\calB(\calY,\calH)}.
\end{equation}
Moreover, using \eqref{repprop} again, we see that for all $y \in \calY$:
\begin{equation*}
\begin{aligned}
\norm{(K_u-K_v)y}_{\calH}^2 = \inner{(K_u-K_v)y}{(K_u-K_v)y}_\calH 
&= \\ \inner{(K_u-K_v)y}{K_u y}_\calH - \inner{(K_u-K_v)y}{K_v y}_\calH 
&= \\ \inner{y}{(K(u,u)-K(u,v))y}_\calY - \inner{y}{(K(v,u)-K(v,v))y}_\calY 
&= \\ \inner{y}{(K(u,u)-K(u,v) -K(v,u)+K(v,v))y}_\calY 
&\leq \\ \norm{y}^2_\calY \norm{K(u,u)-K(u,v) -K(v,u)+K(v,v)}_{\calB(\calY)}.
\end{aligned}
\end{equation*}
It follows that $\norm{K_u-K_v}_{\calB(\calY,\calH)}$ is less than or equal to
\begin{align*}
\norm{K(u,u)-K(u,v) -K(v,u)+K(v,v)}_{\calB(\calY)}^{\frac{1}{2}}.
\end{align*}
Combining this with \eqref{ineqHuHv2} yields \eqref{ineqHuHv}, as desired. 
\end{proof}

In view of \eqref{ineqHuHv}, it is tempting to impose the following condition on $K$:
\begin{equation}
\label{nonexpansivekernel}
\norm{K(u,u)-K(u,v)-K(v,u)+K(v,v)}_{\calB(\calY)}^{\frac{1}{2}} \leq \norm{u-v}_\calU.
\end{equation}
Indeed, if \eqref{nonexpansivekernel} holds then every $H\in \calH$ would satisfy $\norm{H(u) - H(v)}_\calY \leq \norm{H}_\calH \norm{u-v}_\calU$. That is, $H$ is Lipschitz continuous and nonexpansive if, in addition, $\norm{H}_\calH \leq 1$. We will call kernels that satisfy \eqref{nonexpansivekernel} \emph{nonexpansive}. Note that this notion of nonexpansiveness is not the same as the one in Definition~\ref{d:nonexpansive}, because it involves kernels, that is, mappings of \emph{two variables} on $\calU$. Nonetheless, the term ``nonexpansive" is natural also in the context of kernels because it is intimately related to Definition~\ref{d:nonexpansive} through \emph{feature maps}. Indeed, let $K:\calU\times\calU\to\calB(\calY)$ be symmetric positive semidefinite. Then, by Theorem~\ref{t:featuremap} in Appendix~\ref{s:featuremaps}, $K(u,v) = \phi(u)^* \phi(v)$ for some $\phi: \calU \to \calB(\calY,\calW)$, where $\calW$ is a Hilbert space. Next, we rewrite
\begin{equation*}
\begin{aligned}
K(u,u)-K(u,v)-K(v,u)+K(v,v)&
= \\ (\phi(u) - \phi(v))^*(\phi(u) - \phi(v))&.
\end{aligned}
\end{equation*}
By \cite[Fact 2.18(ii)]{Bauschke2011}, this means that
\begin{align*}
\norm{K(u,u)-K(u,v)-K(v,u)+K(v,v)}_{\calB(\calY)}^{\frac{1}{2}}& = \\ \norm{\phi(u) - \phi(v)}_{\calB(\calY,\calW)}&.
\end{align*}
Therefore, $K$ is nonexpansive if and only if all its associated feature maps are nonexpansive in the sense of Definition~\ref{d:nonexpansive}.

\begin{definition}
\label{d:nonexpansivekernel}
A mapping $K: \calU \times \calU \to \calB(\calY)$ is \emph{nonexpansive} if \eqref{nonexpansivekernel} holds for all $u,v \in \calU$.
\end{definition}

At this point, it is important to realize that the nonexpansiveness property is \emph{not} satisfied by every symmetric positive semidefinite kernel. As such, $K$ has to be designed judiciously so that \eqref{nonexpansivekernel} holds. To illustrate this further, we consider the following (non)example.

\begin{example}
\label{e:nonexpansivespline}
A class of kernels that has great success in linear system identification are the so-called \emph{stable spline kernels}, see e.g. \cite{Pillonetto2014}. In fact, these kernels have remarkable performance when applied to the problem of estimating impulse response functions \cite[Sec. 13.5]{Pillonetto2014}. However, as we will see next, these kernels do not possess the nonexpansive property and are therefore less suited for identification of nonexpansive (and thus, dissipative) systems. We focus on the first order stable spline kernel, defined by 
\begin{equation}
\label{stablespline}
K(u,v) = e^{-\beta \max\{u,v\}}, \:\: u,v \in \calU := \mathbb{R}^+,
\end{equation}
where $\beta > 0$, see \cite[Eq. (79)]{Pillonetto2014}. A simple calculation shows that
\begin{equation}
\label{stablesplinenonexpansive}
|K(u,u)+K(v,v)-2K(u,v)| = |e^{-\beta u} - e^{-\beta v}|.
\end{equation} 
Now, fix any $v \in \calU$ and suppose $u \geq v$. Define $x:= u-v$. Then \eqref{stablesplinenonexpansive} is equal to
$$
f(x) = e^{-\beta v} (1-e^{-\beta x}),
$$
so that $\frac{d}{dx}f(0) = \beta e^{-\beta v} > 0$. However, $\frac{d}{dx} x^2|_{x=0} = 0$. This means that for $x > 0$ sufficiently small, \eqref{nonexpansivekernel} is violated. We conclude that \eqref{stablespline} is not nonexansive for any $\beta >0$. 
\end{example}

Despite the fact that stable spline kernels do not satisfy Definition~\ref{d:nonexpansivekernel}, it turns out that several other kernels \emph{are}, in fact, nonexpansive. 

\begin{proposition}
\label{p:nonexpansivekernels}
The following (scalar-valued) symmetric positive semidefinite kernels $k: \calU \times \calU \to \mathbb{R}$ are nonexpansive:
\begin{itemize}
\item The bilinear kernel \eqref{bilinearkernel}.
\item The Gaussian kernel \eqref{Gaussiankernel} whenever $\sigma \geq \sqrt{2}$. 
\item The ``scaled" Laplacian kernel 
\begin{equation}
\label{scaledLaplaciankernel}
k(u,v) = (1+\norm{u-v}_\calU) e^{-\norm{u-v}_\calU}.
\end{equation}
\item The kernel
\begin{equation}
\label{fractionkernel}
k(u,v) = (c + \norm{u-v}_\calU^2)^{-d}
\end{equation} 
where $c \geq 0$ and $d > 0$ are reals satisfying $2d \leq c^{d+1}$. 
\end{itemize} 

In addition, $K: \calU \times \calU \to \calB(\calY)$ is both symmetric positive semidefinite and nonexpansive if
\begin{itemize}
\item $K(u,v) = k(u,v) R$ where $k : \calU \times \calU \to \mathbb{R}$ is a symmetric positive semidefinite and nonexpansive kernel, and $R \in \calB(\calY)$ is self-adjoint and positive with $\norm{R}_{\calB(\calY)} \leq 1$. 
\item $K(u,v) = \sum_{i = 1}^r \alpha_i K_i(u,v)$, where $K_i : \calU \times \calU \to \calB(\calY)$ is nonexpansive symmetric positive semidefinite, $\alpha_i \geq 0$ for all $i = 1,2\dots,r$ and $\sum_{i = 1}^r \alpha_i \leq 1$. 
\item $K(u,v) = R L(u,v) R^*$, where $L:\calU \times \calU \to \mathbb{R}$ is a symmetric positive semidefinite and nonexpansive kernel, and $R \in \calB(\calY)$ satisfies $\norm{R}_{\calB(\calY)} \leq 1$. 
\end{itemize}
\end{proposition}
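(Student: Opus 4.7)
The claim splits naturally into the four scalar statements and the three operator-valued constructions. For a scalar symmetric positive semidefinite kernel $k$, the feature-map identity derived in the paragraph preceding the proposition gives $k(u,u) - 2k(u,v) + k(v,v) = \norm{\phi(u) - \phi(v)}^2 \geq 0$, so the operator norm appearing in \eqref{nonexpansivekernel} collapses to this nonnegative scalar and the nonexpansiveness condition becomes the single inequality
\begin{equation*}
k(u,u) - 2k(u,v) + k(v,v) \leq \norm{u-v}_\calU^2.
\end{equation*}
In each of the four listed cases the left-hand side depends on $u,v$ only through $t := \norm{u-v}_\calU$, so the proof reduces to a one-variable inequality on $[0,\infty)$ that I would verify by elementary calculus.

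Specifically, the bilinear kernel yields equality directly from the expansion of $\norm{u-v}^2$. For the Gaussian kernel, the left-hand side equals $2(1-e^{-t^2/\sigma^2})$ (modulo the convention for $\sigma$), and the elementary bound $1-e^{-x}\leq x$ yields exactly the stated threshold $\sigma \geq \sqrt{2}$. For the scaled Laplacian kernel I would set $g(t) := t^2 - 2\bigl(1-(1+t)e^{-t}\bigr)$ and check $g(0)=0$ together with $g'(t) = 2t(1-e^{-t}) \geq 0$ on $[0,\infty)$. For the fraction kernel, setting $s := t^2$ and $h(s) := s - 2c^{-d} + 2(c+s)^{-d}$, one has $h(0)=0$ and $h'(s) = 1 - 2d(c+s)^{-d-1}$, which is nonnegative at $s=0$ precisely when $2d \leq c^{d+1}$ and then stays nonnegative since $h'$ is increasing in $s$.

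For the operator-valued constructions, the key observation is that the discrepancy operator $\Delta(u,v) := K(u,u) - K(u,v) - K(v,u) + K(v,v)$ is always self-adjoint and positive, by the same feature-map rewrite used for the scalar case. In case (a), $\Delta(u,v) = \bigl[k(u,u)-2k(u,v)+k(v,v)\bigr]R$ with $R \in \calB(\calY)$ positive self-adjoint, so $\norm{\Delta(u,v)}_{\calB(\calY)}^{1/2} \leq \norm{u-v}_\calU \cdot \norm{R}_{\calB(\calY)}^{1/2} \leq \norm{u-v}_\calU$. In case (b), linearity of $\Delta$ in $K$ and the triangle inequality give $\norm{\Delta(u,v)}_{\calB(\calY)} \leq \sum_i \alpha_i \norm{\Delta_{K_i}(u,v)}_{\calB(\calY)} \leq \bigl(\sum_i \alpha_i\bigr)\norm{u-v}_\calU^2 \leq \norm{u-v}_\calU^2$. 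In case (c), $\Delta(u,v) = [L(u,u)-2L(u,v)+L(v,v)]\,RR^*$ with $\norm{RR^*}_{\calB(\calY)} = \norm{R}_{\calB(\calY)}^2 \leq 1$, and the same argument closes. The accompanying positive semidefiniteness of the composite kernels in (a)--(c) is standard (scaling by a positive operator, conic combinations, and congruence preserve positivity of operator-valued kernels).

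The only subtlety is the square root in \eqref{nonexpansivekernel}: the numerical thresholds $\sigma \geq \sqrt{2}$, $\norm{R}_{\calB(\calY)}\leq 1$, and $\sum_i \alpha_i \leq 1$ all arise precisely from matching this square root against the unweighted $\norm{u-v}_\calU$ on the right-hand side. Beyond this bookkeeping, every estimate in the proof reduces to elementary calculus or standard operator-norm inequalities, so I do not anticipate any deeper obstacle.
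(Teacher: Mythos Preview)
Your proposal is correct and follows essentially the same approach as the paper: reducing each scalar case to a one-variable calculus inequality in $t=\norm{u-v}_\calU$ (or $s=t^2$), and handling the three operator-valued constructions via the triangle/submultiplicativity of the operator norm applied to the discrepancy $\Delta(u,v)$. The specific derivative computations you give (e.g., $g'(t)=2t(1-e^{-t})$ for the scaled Laplacian and the monotonicity of $h'$ for the fraction kernel) match the paper's arguments almost line for line.
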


\begin{proof}
It is clear that the bilinear kernel is nonexpansive, because for this kernel \eqref{nonexpansivekernel} holds with equality. To prove the statement for the Gaussian kernel, note that the squared left hand side of \eqref{nonexpansivekernel} equals
$$
2-2e^{-\frac{\norm{u-v}_\calU^2}{\sigma^2}}, 
$$
because the latter expression is nonnegative for all $u,v \in \calU$. By defining $x:= \norm{u-v}_\calU^2$, it thus suffices to prove that $f(x) := 2-2e^{-\frac{x}{\sigma^2}} \leq x$ for all $x \geq 0$. Note that $f(0) = 0$, thus this inequality holds at $0$. Furthermore, $\frac{df(x)}{dx} = \frac{2}{\sigma^2} e^{-\frac{x}{\sigma^2}}$, which is smaller or equal to $1$ for all $x \geq 0$ if $\sigma^2 \geq 2$. Therefore, we conclude that the Gaussian kernel is nonexpansive for $\sigma \geq \sqrt{2}$. 

Next, we note that both kernels \eqref{scaledLaplaciankernel} and \eqref{fractionkernel} are symmetric positive semidefinite by Proposition~\ref{p:scalarkernels}\ref{itemradial} because $(1+x)e^{-\sqrt{x}}$ and $(c+x)^{-d}$ are completely monotone functions. We will now prove that they are also nonexpansive, starting with the scaled Laplacian kernel. The squared left hand side of \eqref{nonexpansivekernel} equals
$$
2-2 (1+\norm{u-v}_\calU) e^{-\norm{u-v}_\calU},
$$
because the latter expression is nonnegative for all $u,v\in \calU$. Thus, by defining $x := \norm{u-v}_\calU$, it suffices to prove that $f(x):= 2-2(1+x)e^{-x} \leq x^2$ for all $x \geq 0$. Clearly, this inequality is satisfied for $x = 0$. In addition, we note that $\frac{df(x)}{dx} = 2x e^{-x} \leq 2x$ for all $x \geq 0$. As such, \eqref{scaledLaplaciankernel} is nonexpansive.
Next, we consider \eqref{fractionkernel}. For this kernel, the squared left hand side of \eqref{nonexpansivekernel} boils down to 
$$
2c^{-d} -2(c + \norm{u-v}_\calU^2)^{-d},
$$
because the latter is nonnegative for all $u,v\in\calU$. Therefore, similar as before, it suffices to show that $f(x) := 2c^{-d} -2(c + x)^{-d} \leq x$ for all $x \geq 0$. Clearly, this holds for $x = 0$. In addition, note that $\frac{df(x)}{dx} = 2d(c+x)^{-d-1}$, and thus, $\frac{df(0)}{dx} = 2d c^{-d-1} \leq 1$ by the hypothesis on $c$ and $d$. Moreover, since $\frac{d^2 f(x)}{dx^2} = -2d(d+1)(c+x)^{-d-2} \leq 0$ for all $x \geq 0$, we have that $\frac{df(x)}{dx} \leq 1$ for all $x \geq 0$ and thus $f(x) \leq x$ for all $x \geq 0$. We conclude that \eqref{fractionkernel} is nonexpansive if $c$ and $d$ satisfy $2d \leq c^{d+1}$. 

Finally, we prove the last three statements. Let $K(u,v) = k(u,v) R$ with $k$ a scalar-valued nonexpansive kernel and $R$ self-adjoint, positive and $\norm{R}_{\calB(\calY)} \leq 1$. By Proposition~\ref{p:separablekernels}, $K$ is symmetric positive semidefinite. In addition, 
\begin{align*}
\norm{K(u,u)-K(u,v)-K(v,u)+K(v,v)}_{\calB(\calY)} &\leq \\ |k(u,u)-k(u,v)-k(v,u)+k(v,v)| \norm{R}_{\calB(\calY)}
&\leq \\ \norm{u-v}_\calU^2&,
\end{align*}
that is, $K$ is nonexpansive. Next, let $K(u,v) = \sum_{i = 1}^r \alpha_i K_i(u,v)$, where each $K_i$ is nonexpansive, $\alpha_i \geq 0$ and $\sum_{i=1}^r \alpha_i \leq 1$. Clearly, $K$ is symmetric positive semidefinite by repeated application of the first item of Proposition~\ref{p:combiningkernels}. Moreover, 
\begin{align*}
\norm{K(u,u)-K(u,v)-K(v,u)+K(v,v)}_{\calB(\calY)} 
&\leq \\ \sum_{i=1}^r \alpha_i \norm{K_i(u,u)-K_i(u,v)-K_i(v,u)+K_i(v,v)}_{\calB(\calY)} 
&\leq \\ \sum_{i=1}^r \alpha_i \norm{u-v}_\calU^2 
&\leq \\ \norm{u-v}_\calU^2&,
\end{align*}
therefore $K$ is nonexpansive. 

Finally, consider $K(u,v) = R L(u,v) R^*$, where $L$ is nonexpansive and $\norm{R}_{\calB(\calY)} \leq 1$. In this case, 
\begin{align*}
\norm{K(u,u)-K(u,v)-K(v,u)+K(v,v)}_{\calB(\calY)} 
&\leq \\ \norm{R}_{\calB(\calY)}^2 \norm{L(u,u)-L(u,v)-L(v,u)+L(v,v)}_{\calB(\calY)} &\leq \\ \norm{u-v}_\calU^2&,
\end{align*}
again showing that $K$ is nonexpansive. 
\end{proof}

In the following theorem we summarize the main result of this section, which shows how nonexpansive kernels give rise to nonexpansive estimated operators. 

\begin{theorem}
\label{t:nonexpansive}
Let $\bar{u} := (u_1,u_2,\dots,u_n) \in \calU^n$ and $\bar{y} := (y_1,y_2,\dots,y_n) \in \calY^n$ be data. 
Consider a symmetric positive semidefinite kernel $K: \calU\times \calU \to \calB(\calY)$, and let $\calH$ be its associated reproducing kernel Hilbert space. Assume that $K$ is nonexpansive. Then the following statements hold:
\begin{enumerate}[label=(\alph*)]
\item Every $H \in \calH$ is Lipschitz continuous. \label{t:nonexpansive1}
\item The solution $\hat{H} \in \calH$ to the regularized least squares problem \eqref{regLSpf} has norm
$$
\norm{\hat{H}}_\calH = \norm{G\sq(G+\gamma I)\inv \bar{y}}_{\calY^n},
$$
where $G :\calY^n \to \calY^n$ is the Gram operator associated with $K$ and $\bar{u}$.
\label{t:nonexpansivenormHhat}
\item Thus, $\hat{H}$ is nonexpansive if $\gamma > 0$ satisfies
\begin{equation}
\label{conditiongamma}
\norm{G\sq(G+\gamma I)\inv \bar{y}}_{\calY^n} \leq 1.
\end{equation}
\label{t:nonexpansive2}
\end{enumerate}  
\end{theorem}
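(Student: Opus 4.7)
The theorem decomposes into three claims that build on one another: part (a) converts the RKHS-norm into a Lipschitz constant, part (b) evaluates that norm for the regularized least squares minimizer in closed form, and part (c) combines the two. I would prove them in this order. For part (a), the plan is to apply Lemma~\ref{l:boundHuHv} and then use the assumed nonexpansiveness of $K$. The lemma bounds $\norm{H(u)-H(v)}_\calY$ by $\norm{H}_\calH$ times the square-root expression appearing on the right-hand side of \eqref{nonexpansivekernel}, and by hypothesis that expression is at most $\norm{u-v}_\calU$. Hence every $H\in\calH$ is Lipschitz with constant $\norm{H}_\calH$, which settles the claim.

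For part (b), I would invoke the representer theorem for regularized least squares in an operator-valued RKHS (Appendix~\ref{s:regLS}). It guarantees a unique minimizer of the form $\hat{H} = \sum_{i=1}^n K_{u_i} c_i$, where $K_{u_i}\!:\calY\to\calH$ is the section operator $y\mapsto K(\cdot,u_i)y$ used in the proof of Lemma~\ref{l:boundHuHv}, and where the coefficient vector $c=(c_1,\ldots,c_n)\in\calY^n$ is characterised by the normal equation $(G+\gamma I)c = \bar{y}$. Applying the reproducing property \eqref{repprop} to compute $\norm{\hat{H}}_\calH^2$ gives
$$
\norm{\hat{H}}_\calH^2 = \sum_{i,j=1}^n \inner{c_i}{K(u_i,u_j)c_j}_\calY = \inner{c}{Gc}_{\calY^n} = \norm{G\sq c}_{\calY^n}^2,
$$
and substituting $c = (G+\gamma I)\inv\bar{y}$ yields the claimed expression.

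Part (c) is then an immediate corollary: the condition \eqref{conditiongamma} is exactly $\norm{\hat{H}}_\calH \leq 1$, so by part (a) the operator $\hat{H}$ is Lipschitz with constant at most one, i.e., nonexpansive. The main technical point is making the manipulation in part (b) rigorous when $\calY$ is infinite-dimensional. In that case $G$ is a bounded, self-adjoint, positive operator on the Hilbert space $\calY^n$ rather than a matrix, and moving from $\norm{G\sq c}_{\calY^n}^2$ to $\norm{G\sq (G+\gamma I)\inv\bar{y}}_{\calY^n}^2$ relies on $G\sq$ commuting with $(G+\gamma I)\inv$. This follows from the continuous functional calculus for self-adjoint operators, which is the same machinery already invoked in the preliminaries when defining the square root of a positive operator.
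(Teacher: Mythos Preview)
Your proposal is correct and follows essentially the same route as the paper: Lemma~\ref{l:boundHuHv} plus nonexpansiveness of $K$ for (a), the representer theorem and reproducing property to get $\norm{\hat H}_\calH^2 = \inner{c}{Gc}_{\calY^n}$ for (b), and the combination for (c). One small remark: the final comment about needing $G\sq$ to commute with $(G+\gamma I)\inv$ is unnecessary, since the passage from $\norm{G\sq c}_{\calY^n}$ to $\norm{G\sq(G+\gamma I)\inv\bar y}_{\calY^n}$ is a direct substitution of $c=(G+\gamma I)\inv\bar y$ and requires no commutation.
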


\begin{proof}
We first prove statement \ref{t:nonexpansive1}. By Lemma~\ref{l:boundHuHv}, we have that \eqref{ineqHuHv} holds for any $u,v \in \calU$ and $H \in \calH$. Moreover, because $K$ is nonexpansive, we conclude that 
$$
\norm{H(u) - H(v)}_\calY \leq \norm{H}_\calH \norm{u-v}_\calU,
$$
that is, $H$ is Lipschitz continuous.

Next, we prove \ref{t:nonexpansivenormHhat}. Let $\bar{c} := (c_1,c_2,\dots,c_n) \in \calY^n$ be the unique solution to \eqref{lineqregLS}, i.e., $\bar{c} = (G+\gamma I)\inv\bar{y}$.  Note that by \eqref{estimateregLS} and the reproducing property,
\begin{align*}
\inner{\hat{H}}{\hat{H}}_\calH &= \sum_{i = 1}^n \inner{\hat{H}}{K(\cdot,u_i)c_i}_\calH \\
&= \sum_{i = 1}^n \inner{c_i}{\hat{H}(u_i)}_\calY \\
&= \sum_{i = 1}^n \inner{c_i}{\sum_{j=1}^n K(u_i,u_j)c_j}_\calY \\
&= \inner{\bar{c}}{G\bar{c}}_{\calY^n} \\
&= \norm{G\sq(G+\gamma I)\inv \bar{y}}_{\calY^n}^2.
\end{align*}
As such, \ref{t:nonexpansivenormHhat} holds. Now, to prove \ref{t:nonexpansive2}, by \eqref{conditiongamma} the norm of $\hat{H}$ is less than or equal to $1$, which shows that $\hat{H}$ is nonexpansive. This proves the theorem. 
\end{proof}

\begin{remark}
Clearly, the condition \eqref{conditiongamma} can always be satisfied by selecting a sufficiently large $\gamma > 0$. Indeed,
\begin{align*}
\norm{G\sq(G+\gamma I)\inv \bar{y}}_{\calY^n} \leq \frac{1}{\gamma} \norm{G}_{\calB\left(\calY^n\right)}^{\frac{1}{2}} \norm{\bar{y}}_{\calY^n},
\end{align*}
where we have used \cite[Ch. 11, Thm. 12]{Bollobas1999} to obtain an upper bound on $\norm{(G+\gamma I)\inv}_{\calB(\calY^n)}$.
\end{remark}

By Theorem~\ref{t:nonexpansive} we take the standpoint that identifying a nonexpansive operator involves regularized least squares with i) a suitable (nonexpansive) kernel, and ii) a suitable regularization parameter $\gamma$. Of course, by \eqref{regLSpf} there is a clear trade-off between the data misfit and the norm of $\hat{H}$. As such, in practice it is often recommended to search for a $\gamma$ that ensures that \eqref{conditiongamma} holds (almost) with equality. This means that nonexpansiveness of $\hat{H}$ is guaranteed, but $\gamma$ is not chosen too large so that $\hat{H}$ still fits the data well. In some cases, it turns out to be beneficial to select $\gamma$ such that \eqref{conditiongamma} holds \emph{strictly}, ensuring that $\hat{H}$ is a contraction; we will discuss this in Section~\ref{s:nonexpansivetodissipative}. Further numerical aspects of the approach are discussed in Section~\ref{s:potassiumcurrent} where we treat numerical examples. 

\section{From nonexpansiveness to incremental IQC}
\label{s:nonexpansivetodissipative}

Returning to the problem discussed in Section~\ref{s:problem}, the status quo is that Theorem~\ref{t:nonexpansive} shows how to identify a nonexpansive operator $S$ in \eqref{operatorS}. The question that we address in this section is how to retrieve the original operator $R$ satisfying \eqref{incrementalaveragenonnegativity}, given $S$. In general, this problem is challenging because computing the inverse of a generic nonlinear operator is hard. Nonetheless, we will see that the properties on $S$ imposed by Theorem~\ref{t:nonexpansive} simplify matters and allow us to simulate $R$ effectively using fixed point algorithms. 

To facilitate the analysis, throughout this section we assume that $\calU$ and $\calY$ are spaces of functions as in Section~\ref{s:problem}. Suppose that the operator $S$ is obtained using the procedure of Theorem~\ref{t:nonexpansive}\ref{t:nonexpansive2}. We will additionally assume that $\gamma$ is selected so that \eqref{conditiongamma} holds strictly, meaning that $S$ is a \emph{contraction}. 

\begin{theorem}
\label{t:fixedpoint}
Suppose that $S:\calU \to \calY$ is a contraction with Lipschitz constant $\ell \in [0,1)$. Consider the matrix $M$ in \eqref{matrixM} and its inverse $N$ in \eqref{matrixN}. Assume that $N_{11}$ is invertible and 
\begin{equation}
\label{conditioneps}
\epsilon := \ell \norm{N_{11}\inv N_{12}}_{\calB(\calY,\calU)} < 1.
\end{equation} 
Then $N_{11}+N_{12}S$ is invertible so $R:\calU \to \calY$ in \eqref{operatorT} is well-defined. Moreover, for any $u^* \in \calU$, the output $y^* = R(u^*)$ can be computed as 
\begin{equation}
\label{computedoutputT}
y^* = (N_{21}+N_{22}S)(v^*).
\end{equation} 
Here $v^*$ is obtained via the Picard iteration $v^* = \lim_{k \to \infty} v^k$, where $v^0 \in \calU$ is arbitrary and
\begin{equation}
\label{picard}
v^{k+1} = N_{11}\inv u^* - N_{11}\inv N_{12} S(v^k) \text{ for } k \geq 0.
\end{equation} 
In addition, for any $k \geq 0$ we have that 
\begin{equation}
\label{convergencerate}
\norm{v^k - v^*}_\calU \leq \epsilon^k \norm{v^0 - v^*}_\calU.
\end{equation} 
\end{theorem}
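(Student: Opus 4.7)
The plan is to recognize that the equation $(N_{11}+N_{12}S)(v)=u^*$ can be rewritten as a fixed-point equation $v = T(v)$, where
\[
T(v) := N_{11}\inv u^* - N_{11}\inv N_{12}\, S(v),
\]
and then invoke the Banach fixed-point theorem. The key computation is to bound the Lipschitz constant of $T$: for any $v_1,v_2\in\calU$,
\[
\norm{T(v_1)-T(v_2)}_\calU \leq \norm{N_{11}\inv N_{12}}_{\calB(\calY,\calU)}\,\norm{S(v_1)-S(v_2)}_\calY \leq \epsilon\,\norm{v_1-v_2}_\calU,
\]
using that $S$ has Lipschitz constant $\ell$ and the definition of $\epsilon$ in \eqref{conditioneps}. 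Since $\epsilon<1$, $T$ is a contraction on the Banach space $\calU$, so by the Banach fixed-point theorem $T$ has a unique fixed point $v^*\in\calU$, the Picard iterates \eqref{picard} converge to $v^*$ from any starting point $v^0$, and the geometric rate \eqref{convergencerate} is exactly the standard conclusion of that theorem.

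Next I would deduce invertibility of $N_{11}+N_{12}S$ from the fixed-point statement. Surjectivity is immediate: for any $u^*\in\calU$, the fixed point $v^*$ of $T$ satisfies $(N_{11}+N_{12}S)(v^*)=u^*$ by rearranging $T(v^*)=v^*$. Injectivity follows because if $(N_{11}+N_{12}S)(v_1)=(N_{11}+N_{12}S)(v_2)=u^*$, then both $v_1$ and $v_2$ are fixed points of the same $T$ associated to $u^*$, hence equal by uniqueness. This shows $(N_{11}+N_{12}S)\inv$ exists as an operator $\calU\to\calU$, so $R$ defined in \eqref{operatorT} is well-defined, and in particular $v^*=(N_{11}+N_{12}S)\inv(u^*)$. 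Substituting into \eqref{operatorT} gives the computation formula \eqref{computedoutputT}.

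The argument is essentially routine once the contraction estimate is in place; the only real obstacle is verifying that $T$ maps $\calU$ into itself and that the operator-norm chain used in the Lipschitz bound is valid with the mixed $\calB(\calY,\calU)$ norm. This comes down to the fact that $N_{11}\inv N_{12}$ acts from $\calY$ to $\calU$ (since $N_{12}$ is applied pointwise to a $\calY$-valued signal and then $N_{11}\inv$ to a $\calU$-valued signal), so the composition $N_{11}\inv N_{12}\, S$ does send $\calU$ to $\calU$. Everything else — existence and uniqueness of the fixed point, the geometric convergence rate $\epsilon^k$, and the identification $y^*=R(u^*)$ via \eqref{operatorT} — is a direct application of the Banach fixed-point theorem combined with Proposition~\ref{p:scattering}\ref{i:MN}.
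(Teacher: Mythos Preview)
Your proposal is correct and follows essentially the same approach as the paper: define the map $T(v)=N_{11}\inv u^*-N_{11}\inv N_{12}S(v)$, verify it is an $\epsilon$-contraction on $\calU$, and invoke the Banach fixed-point theorem to obtain existence, uniqueness, and the geometric rate, from which invertibility of $N_{11}+N_{12}S$ and formula \eqref{computedoutputT} follow. The only cosmetic difference is that the paper argues invertibility directly from ``$u^*$ arbitrary and $v^*$ unique,'' whereas you spell out surjectivity and injectivity separately; the reference to Proposition~\ref{p:scattering}\ref{i:MN} at the end is unnecessary, since \eqref{computedoutputT} follows immediately from the definition \eqref{operatorT} of $R$.
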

\vspace{2mm}    

\begin{proof}
The proof is a consequence of the Banach fixed point theorem. Define $F:\calU\to\calU$ as 
$$
F: v \mapsto N_{11}\inv u^* - N_{11}\inv N_{12} S(v),
$$ 
for any given $u^* \in \calU$. Since $S$ is a contraction,
\begin{align*}
\norm{F(u)-F(v)}_{\calU} &= \norm{N_{11}\inv N_{12} S(u)-N_{11}\inv N_{12} S(v)}_\calU  \\
&\leq \ell \norm{N_{11}\inv N_{12}}_{\calB(\calY,\calU)} \norm{u-v}_\calU,
\end{align*} 
so $F$ is also a contraction by \eqref{conditioneps}. Therefore, by Banach fixed point theorem \cite[Thm. 1.48]{Bauschke2011}, the operator $F$ has a unique fixed point $v^* = \lim_{k \to \infty} v^k$, where $v^k$ is defined in \eqref{picard}. Also, by the same result, the rate of convergence is given in \eqref{convergencerate}. By definition of fixed point, $(N_{11}+N_{12}S)(v^*) = u^*$. Since $u^*$ was arbitrary and $v^*$ is unique for any $u^*$, this means that $N_{11}+N_{12}S$ is invertible. By definition of $R$ in \eqref{operatorT}, we see that $y^* = R(u^*)$ is given by \eqref{computedoutputT}. 
\end{proof}

We note that for the two common examples of $\Phi$ given in Section~\ref{s:problem}, the matrix $N$ is given by 
$$
\frac{\sqrt{2}}{2}
\begin{bmatrix}
I & I \\ I & -I
\end{bmatrix} \text{ and } \begin{bmatrix}
\frac{1}{\sqrt{\delta}} I & 0 \\ 0 & I
\end{bmatrix},
$$
respectively, which both satisfy the assumption \eqref{conditioneps}.

\section{Causality and incremental dissipativity}
\label{s:causalandincdis}
In this section, we focus on kernels for causality and incremental dissipativity. Throughout, we assume that $\calU$ and $\calY$ are the function spaces as in Section~\ref{s:preliminariesdissipativity}.

\subsection{Causality}

We first turn our attention to causality. We emphasize that (non-)causality has been investigated before in a linear setting \cite{Dinuzzo2015,Blanken2020}. Here we study causality for systems that are not necessarily linear. Recall that $P_T$ is defined in \eqref{definitionPT}. The operator $P_T$ is bounded on $\calU$ since $\norm{P_T u}_\calU \leq \norm{u}_\calU$. By linearity, $P_T$ is thus \emph{continuous}. Now, in order to learn causal operators from data, we define the notion of \emph{causal kernels}.

\begin{definition}
A mapping $K: \calU \times \calU \to \calB(\calY)$ is \emph{causal} if $K(\cdot,u)y :\calU \to \calY$ is causal for all $u\in \calU$ and $y\in\calY$. 
\end{definition}

Causal kernels are relevant because the RKHS of a causal, symmetric positive semidefinite kernel contains only causal operators. In particular, this means that the solution $\hat{H} \in \calH$ to the regularized least squares problem \eqref{regLSpf} is causal if $\calH$ is generated by a causal kernel. This is proven in the following theorem. 

\begin{theorem}
\label{t:causal}
Let $K:\calU\times\calU \to\calB(\calY)$ be a causal and symmetric positive semidefinite kernel. Let $\calH$ be its associated reproducing kernel Hilbert space. Then any $H \in \calH$ is causal. 
\end{theorem}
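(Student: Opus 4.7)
The plan is to use the standard density argument for RKHS together with continuity of both point evaluation and the truncation operator $P_T$.

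First I would recall that, by the general construction of an RKHS (reviewed in the Appendix), the linear span
\[
\calH_0 := \lin\{K(\cdot,u)y : u \in \calU,\, y \in \calY\}
\]
is dense in $\calH$. Every generator $K(\cdot,u)y$ is causal by the hypothesis on $K$, and a finite linear combination of causal operators is manifestly causal (since $P_T$ is linear, so $P_T(\sum_i \alpha_i H_i(w)) = \sum_i \alpha_i P_T H_i(w)$ and we may apply causality term by term). Hence $\calH_0$ consists entirely of causal operators.

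Next, given an arbitrary $H \in \calH$, pick a sequence $H_n \in \calH_0$ with $\norm{H_n - H}_\calH \to 0$. By Lemma~\ref{l:boundHu}\ref{l:boundHub} (the non-incremental version of Lemma~\ref{l:boundHuHv}), for every fixed $w \in \calU$ we have
\[
\norm{H_n(w) - H(w)}_\calY \leq \norm{H_n - H}_\calH \, \norm{K(w,w)}_{\calB(\calY)}^{\frac{1}{2}},
\]
so $H_n(w) \to H(w)$ in $\calY$. Applying this at both $w = u$ and $w = P_T u$, and using that $P_T : \calY \to \calY$ is bounded (and thus continuous) by the estimate $\norm{P_T z}_\calY \leq \norm{z}_\calY$, we obtain
\[
P_T H_n(u) \to P_T H(u),\qquad P_T H_n(P_T u) \to P_T H(P_T u)
\]
in $\calY$. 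Since $H_n$ is causal, $P_T H_n(u) = P_T H_n(P_T u)$ for every $n$, and passing to the limit yields $P_T H(u) = P_T H(P_T u)$ for every $u \in \calU$ and every $T \in \mathbb{T}$, which is exactly causality of $H$.

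The only mild subtlety I anticipate is making sure the density statement above really gives approximation in the RKHS norm (this is exactly the content reviewed in the Appendix, so it is invoked rather than reproved), and being careful that the pointwise bound used comes from a non-incremental counterpart of Lemma~\ref{l:boundHuHv}, which the Appendix supplies. No incremental estimate is needed here, because causality is an equation of individual trajectories rather than of differences.
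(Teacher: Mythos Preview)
Your proof is correct and follows essentially the same route as the paper: both show that the dense subspace $\calH_1$ (your $\calH_0$) of finite combinations $\sum_i K(\cdot,u_i)y_i$ consists of causal operators by linearity of $P_T$, and then pass to the limit using continuity of $P_T$ together with the fact that norm convergence in $\calH$ implies pointwise convergence in $\calY$. The only cosmetic difference is that you make the pointwise convergence explicit via Lemma~\ref{l:boundHu}\ref{l:boundHub}, whereas the paper invokes the completion description of $\calH$ from the Appendix directly.
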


\begin{proof}
By linearity of $P_T$ is it clear that an operator of the form
$$
\sum_{i = 1}^n K(\cdot, u_i)y_i 
$$
is causal for any $u_i \in \calU$ and $y_i \in \calY$ ($i = 1,2,\dots,n$). In other words, the inner product space $\calH_1$, defined in Equation \eqref{H1} of the Appendix,
consists of causal operators. As discussed in Appendix~\ref{s:Moore-Aronszajn}, any operator $H\in\calH$ can be obtained as $H(u) = \lim_{i \to \infty} H_i(u)$ for all $u\in\calU$, where $H_1,H_2,\dots$ is a Cauchy sequence in $\calH_1$. This means that by continuity of $P_T$ and causality of $H_i$, we have that 
\begin{align*}
P_T(H(u)) &= P_T(\lim_{i \to \infty} H_i(u)) \\
&= \lim_{i \to \infty} P_T(H_i(u)) \\
&= \lim_{i \to \infty} P_T(H_i(P_T u)) \\
&= P_T(H(P_T u))
\end{align*}
for all $T \in \mathbb{T}$. That is, $H$ is causal. 
\end{proof}

Causal kernels can often be obtained by truncating the inputs of ``ordinary" kernels. We illustrate this in the special case of $\ell_2$ in the following example.

\begin{example}
Suppose that $\mathbb{T} = \{0,1,\dots,\tau\}$. Let $\calU = \ell_2(\mathbb{T},\mathbb{R}^m)$ and $\calY = \ell_2(\mathbb{T},\mathbb{R}^p)$. Let $K_t :\calU \times \calU \to \calB(\calY)$ be symmetric positive semidefinite for $t = 0,1,\dots,\tau$. We claim that the mapping $K:\calU\times \calU \to \calB(\calY)$, defined by 
$$
K(u,v)y = (K_0(P_0 u,P_0v)y(0),\dots,K_\tau(P_\tau u,P_\tau v)y(\tau)),
$$
is causal and symmetric positive semidefinite. We first show causality. Let $T \in \mathbb{T}$ and note that $P_T(K(u,v)y)$ is equal to
\begin{align*}
(K_0(P_0 u,P_0v)y(0),\dots,K_T(P_T u,P_T v)y(T),0,\dots,0) &= \\
P_T(K(P_T u,v)y)&
\end{align*}
for all $u \in \calU$ and $y\in \calY$. As such, $K$ is causal. Symmetry is a simple consequence of symmetry of $K_i$ ($i = 0,1,\dots,\tau$). Indeed, we see that
\begin{align*}
\inner{K(u,v)y}{z}_\calY &= \sum_{t=0}^\tau \inner{K_t(P_t u,P_t v)y(t)}{z(t)}_{\mathbb{R}^p} \\
&= \sum_{t=0}^\tau \inner{y(t)}{K_t(P_t v,P_t u) z(t)}_{\mathbb{R}^p} \\
&= \inner{y}{K(v,u)z}_\calY.
\end{align*}
Finally, to show positive semidefiniteness, let $n\in \mathbb{N}$, $u_i\in\calU$ and $y_i\in\calY$ for $i = 1,2,\dots,n$. Then the left hand side of \eqref{defpossemidef} can be rewritten as
\begin{align*}
\sum_{i = 1}^n \sum_{j=1}^n \sum_{t=0}^\tau \inner{y_i(t)}{K_t(P_t u_i,P_t u_j)y_j(t)}_{\mathbb{R}^p} &= \\
\sum_{t=0}^\tau \sum_{i = 1}^n \sum_{j=1}^n \inner{y_i(t)}{K_t(P_t u_i,P_t u_j)y_j(t)}_{\mathbb{R}^p} &\geq 0,
\end{align*}
making use of the fact that $K_t$ is positive semidefinite for all $t = 0,1,\dots,\tau$. We thus conclude that $K$ is causal and symmetric positive semidefinite. 
\end{example}

\subsection{Incremental dissipativity}
\label{s:incrementaldissipativity}

One approach to identify an incrementally dissipative operator is to select a kernel that imposes both causality and an  incremental IQC\footnote{This does mean that we restrict our attention to identifying \emph{causal} incrementally dissipative systems.}. Recall from Section~\ref{s:nonexpansivetodissipative} that we can impose incremental IQCs by identifying a contractive ``scattering" operator $S$. In fact, this ensures that $R$, given in \eqref{operatorT}, satisfies \eqref{incrementalaveragenonnegativity}. The question is now: under which conditions is $R$ \emph{causal} as well?

In general, the implication
\begin{equation}
\label{implicationcausality}
S \text{ causal and } N_{11}+N_{12}S \text{ invertible } \implies R \text{ causal}
\end{equation} 
does \emph{not} hold. In fact, in the special case that $M$ and $N$ are given by \eqref{Mpassivity}, the operator $R$ equals $R = 2(I+S)^{-1}-I$. Here, $(I+S)^{-1}$ can be regarded as the \emph{feedback system} obtained from operating $S$ in negative feedback with unit gain. The causality of such feedback systems is well-studied \cite{Willems1969,Saeks1970}. From this literature, counter examples to \eqref{implicationcausality} can be obtained, see for example \cite[Sec. 5.1]{Willems1969}.

Although \eqref{implicationcausality} thus does not hold in general, it turns out that this implication \emph{is} true if $S$ is a contraction. 

\begin{proposition}
\label{p:causalcontraction}
Consider the matrix $M$ in \eqref{matrixM} and its inverse $N$ in \eqref{matrixN}. Suppose that $S: \calU \to \calY$ is a causal contraction with constant $\ell \in [0,1)$, and assume that $N_{11}$ is invertible and \eqref{conditioneps} holds. Then the operator $R$ in \eqref{operatorT} exists and is causal. 
\end{proposition}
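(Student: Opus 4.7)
The plan is to leverage the Picard iteration constructed in Theorem~\ref{t:fixedpoint} together with the causality of $S$. Existence of $R$ is already granted by that theorem (the contraction hypothesis and \eqref{conditioneps} are exactly what is needed), so the only new content is causality. The key observation is that $N_{11}$, $N_{11}^{-1}$, $N_{12}$, $N_{21}$, $N_{22}$ all act pointwise in time, so they commute with the truncation operator $P_T$. Thus for any fixed $T\in\mathbb{T}$ I expect $P_T$ to ``pass through'' every step of the iteration, with the only nontrivial commutation being $P_T S = P_T S P_T$, which is precisely causality of $S$.

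Concretely, fix $u^*\in\calU$ and $T\in\mathbb{T}$. Let $v^k$ denote the Picard iterates from \eqref{picard} applied to $u^*$, and let $\tilde v^k$ denote the Picard iterates applied to $P_T u^*$, with the same initialization $v^0=\tilde v^0$. I claim, by induction on $k$, that
\[
P_T v^k = P_T \tilde v^k \quad\text{for all } k\ge 0.
\]
The base case is trivial. For the inductive step, I apply $P_T$ to \eqref{picard}, use linearity/pointwise action of $N_{11}^{-1}$ and $N_{11}^{-1}N_{12}$ to pull $P_T$ inside, and then invoke causality of $S$ to write $P_T S(v^k) = P_T S(P_T v^k)$ and $P_T S(\tilde v^k) = P_T S(P_T\tilde v^k)$, at which point the inductive hypothesis finishes the step. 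Note that $P_T P_T u^* = P_T u^*$, which is needed to match the two iterations at the $u^*$ term.

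Having established $P_T v^k = P_T \tilde v^k$ for all $k$, I would take limits: by Theorem~\ref{t:fixedpoint} we have $v^k\to v^*$ and $\tilde v^k\to \tilde v^*$ in $\calU$, and since $P_T$ is bounded (hence continuous) on $\calU$, this yields $P_T v^* = P_T \tilde v^*$. Finally, I compute $P_T R(u^*)$ using \eqref{computedoutputT}: pointwise action of $N_{21},N_{22}$ lets me pull $P_T$ inside, and causality of $S$ then gives
\[
P_T R(u^*) = N_{21} P_T v^* + N_{22} P_T S(P_T v^*),
\]
with the analogous expression for $P_T R(P_T u^*)$ in terms of $P_T \tilde v^*$. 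These two expressions agree because $P_T v^* = P_T \tilde v^*$, which establishes causality of $R$.

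I do not anticipate a serious obstacle: the argument is really just ``commute $P_T$ past everything in sight.'' The one place where care is needed is the limit step, to make sure that convergence in $\calU$ and continuity of $P_T$ genuinely transport the pointwise-in-$k$ identity to the limit; the contraction rate \eqref{convergencerate} makes this immediate. I would mention explicitly that the equal initialization $v^0=\tilde v^0$ is harmless because the fixed points $v^*$, $\tilde v^*$ are unique and thus independent of the initialization.
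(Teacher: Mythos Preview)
Your argument is correct, but it takes a different route from the paper. You prove causality of $R$ directly from the defining identity $P_T R(u^*)=P_T R(P_T u^*)$ by pushing $P_T$ through every step of the Picard iteration of Theorem~\ref{t:fixedpoint}, using induction on $k$ and then continuity of $P_T$ to pass to the limit. The paper instead uses the equivalent characterization ``$P_T u = P_T v \Rightarrow P_T R(u)=P_T R(v)$'' and argues \emph{without} iteration: writing $y=(N_{11}+N_{12}S)^{-1}(u)$ and $z=(N_{11}+N_{12}S)^{-1}(v)$, it derives the self-referential bound $\norm{P_T y - P_T z}_\calU \le \epsilon\,\norm{P_T y - P_T z}_\calU$ (via causality of $S$ and the contraction constant), which forces $P_T y = P_T z$; causality of $R$ then follows since compositions of causal maps are causal. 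Your approach has the virtue of reusing the Picard machinery already in place and is perhaps more transparent about \emph{why} causality propagates; the paper's approach is shorter and avoids the limit step altogether, yielding causality of $(N_{11}+N_{12}S)^{-1}$ in one line once the norm inequality is written down.
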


\begin{proof}
By Theorem~\ref{t:fixedpoint}, $R$ exists. Next, we prove causality. First, we note that $R$ is causal if and only if the implication 
\begin{equation}
\label{equivcausal}
P_T u = P_T v \implies P_T R(u) = P_T R(v)
\end{equation} 
holds for all $T \in \mathbb{T}$ and $u,v \in\calU$, see e.g. \cite{vanderSchaft2017}. 

Now, let $T \in \mathbb{T}$ and suppose that $u,v\in\calU$ are such that $P_T u = P_T v$. Define the outputs $y := (N_{11}+N_{12}S)^{-1}(u)$ and $z := (N_{11}+N_{12}S)^{-1}(v)$. Note that $u = (N_{11}+N_{12}S)(y)$ and $v = (N_{11}+N_{12}S)(z)$ so that
\begin{align*}
P_T N_{11} y + P_T N_{12} S(y) = P_T N_{11} z + P_T N_{12} S(z) 
\end{align*}
and 
\begin{align*}
P_T y -P_T z =  P_T N_{11}\inv N_{12}S(z) - P_T N_{11}\inv N_{12}S(y).
\end{align*}
This implies that $\norm{P_T y - P_T z}_\calU$ is upper bounded by
\begin{align*}
&\norm{N_{11}\inv N_{12}}_{\calB(\calY,\calU)} \norm{P_T S(y) - P_T S(z)}_\calY = \\
&\norm{N_{11}\inv N_{12}}_{\calB(\calY,\calU)} \norm{P_T S(P_T y) - P_T S(P_T z)}_\calY \leq \\
&\norm{N_{11}\inv N_{12}}_{\calB(\calY,\calU)} \norm{S(P_T y) - S(P_T z)}_\calY \leq \\ \ell &\norm{N_{11}\inv N_{12}}_{\calB(\calY,\calU)} \norm{P_T y - P_T z}_\calU.
\end{align*}
By \eqref{conditioneps}, we have $P_T y = P_T z$. As such, $(N_{11}+N_{12}S)^{-1}$ is causal by \eqref{equivcausal}. Because $S$ is causal, and the composition of two causal operators is again causal, this implies that $R$ in \eqref{operatorT} is causal. This proves the proposition. 
\end{proof}

By Proposition~\ref{p:causalcontraction}, the coup de gr\^{a}ce is that we can learn incrementally dissipative systems by identifying their causal contractive scatterings. We summarize this in the following result that follows from Theorems~\ref{t:nonexpansive} and \ref{t:causal} and Proposition~\ref{p:causalcontraction}.

\begin{theorem}
Consider a symmetric positive semidefinite kernel $K: \calU\times \calU \to \calB(\calY)$, and let $\calH$ be its associated reproducing kernel Hilbert space. Assume that $K$ is nonexpansive and causal. Then the following statements hold:
\begin{enumerate}[label=(\alph*)]
\item Every $H \in \calH$ is Lipschitz continuous and causal. \label{t:nonexpansivecausal1}
\item The solution $\hat{H} \in \calH$ to the regularized least squares problem \eqref{regLSpf} is contractive if $\gamma > 0$ satisfies \eqref{conditiongamma} with strict inequality.\label{t:nonexpansivecausal2}
\item Consider $\Phi$ satisfying Assumption \ref{a:Phi} and the matrix $M$ and its inverse $N$ in \eqref{matrixM} and \eqref{matrixN}. If \eqref{conditioneps} holds and \eqref{conditiongamma} is satisfied strictly, then the operator
$$
(N_{21}+N_{22}\hat{H})(N_{11}+N_{12}\hat{H})\inv
$$
is incrementally dissipative with respect to $s_\Phi$. 
\end{enumerate}  
\end{theorem}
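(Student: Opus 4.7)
The proof should essentially be a synthesis: parts (a) and (b) repackage the results of Theorem~\ref{t:nonexpansive} and Theorem~\ref{t:causal}, while part (c) chains Proposition~\ref{p:causalcontraction} together with Proposition~\ref{p:scattering}. I do not expect any genuinely new ideas to be needed; the work is in checking that the hypotheses line up.

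For part (a), I would simply observe that the nonexpansiveness of $K$ is exactly the hypothesis of Theorem~\ref{t:nonexpansive}\ref{t:nonexpansive1}, yielding Lipschitz continuity of every $H \in \calH$, while the causality of $K$ is exactly the hypothesis of Theorem~\ref{t:causal}, yielding causality of every $H \in \calH$. The two properties are about different aspects of $H$ and do not interact, so this is essentially a one-line argument.

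For part (b), I would invoke Theorem~\ref{t:nonexpansive}\ref{t:nonexpansivenormHhat} to write $\norm{\hat{H}}_\calH = \norm{G\sq(G+\gamma I)\inv \bar{y}}_{\calY^n}$. Strict inequality in \eqref{conditiongamma} then gives $\norm{\hat{H}}_\calH < 1$. Combining with Lemma~\ref{l:boundHuHv} and the nonexpansiveness of $K$ yields
\begin{equation*}
\norm{\hat{H}(u) - \hat{H}(v)}_\calY \leq \norm{\hat{H}}_\calH \norm{u-v}_\calU
\end{equation*}
for all $u,v \in \calU$, so $\hat{H}$ is a contraction with Lipschitz constant $\ell := \norm{\hat{H}}_\calH < 1$.

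For part (c), the setup of parts (a) and (b) gives a causal contraction $\hat{H}$ with constant $\ell < 1$, and under the standing hypothesis \eqref{conditioneps} Proposition~\ref{p:causalcontraction} applies with $S = \hat{H}$. This simultaneously guarantees that $N_{11}+N_{12}\hat{H}$ is invertible (so $R := (N_{21}+N_{22}\hat{H})(N_{11}+N_{12}\hat{H})\inv$ is well-defined) and that $R$ is causal. Since $\hat{H}$ is nonexpansive it satisfies the incremental IQC with supply $s_\Sigma$, so Proposition~\ref{p:scattering}\ref{i:TsPhi} transfers this to $R$ with supply $s_\Phi$, establishing \eqref{incrementalaveragenonnegativity}. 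Finally, causality of $R$ upgrades the incremental IQC \ref{i:incavnonneg} to incremental dissipativity \ref{i:incdis}, by the equivalence noted immediately after Definition~\ref{d:allsystemproperties}.

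The only subtlety worth double-checking is that the Lipschitz constant $\ell = \norm{\hat{H}}_\calH < 1$ obtained in part (b) is actually the same $\ell$ that feeds into condition \eqref{conditioneps} in Proposition~\ref{p:causalcontraction}. Strict inequality in \eqref{conditiongamma} is what guarantees $\ell < 1$, and the hypothesis that \eqref{conditioneps} holds is then stated separately in the theorem; so no hidden compatibility condition is needed. Once these bookkeeping details are in place, the three parts assemble essentially without calculation.
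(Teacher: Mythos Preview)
Your proposal is correct and follows exactly the route the paper intends: the paper's own proof is a single sentence stating that the theorem ``follows from Theorems~\ref{t:nonexpansive} and \ref{t:causal} and Proposition~\ref{p:causalcontraction},'' and your argument unpacks precisely this chain (with the additional explicit appeal to Proposition~\ref{p:scattering}\ref{i:TsPhi} and the causality-upgrades-IQC-to-dissipativity remark, both of which the paper leaves implicit). The only cosmetic point is that Proposition~\ref{p:scattering} is stated starting from $R$ and deriving $S$, whereas you are going from $S=\hat{H}$ to $R$; this reversal is justified by part~\ref{i:MN} of that proposition (which shows the correspondence is symmetric under $M\leftrightarrow N$), and you may wish to say so in one clause.
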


\section{Illustrative example}
\label{s:potassiumcurrent}

We consider a model of the potassium ion channel, which is a component of the Hodgkin-Huxley dynamical system \cite{Hodgkin1952} describing the electrical characteristics of an excitable cell. This \emph{conductance-based} model relates the current through the potassium channel and the voltage across the cell's membrane through a nonlinear conductance. It is described by
\begin{equation}
\begin{aligned}
\label{potassiumconductance}
\dot{x} &= \alpha(u)(1-x) - \beta(u)x, \:\: x(0) = 0 \\
y &= g x^4 (u-\bar{u}),
\end{aligned}
\end{equation}
where $x: \mathbb{R} \to \mathbb{R}$ denotes the gating (state) variable, $u: \mathbb{R} \to \mathbb{R}$ denotes the (input) voltage across the cell's membrane, and $y : \mathbb{R} \to \mathbb{R}$ is the (output) potassium current per unit area. The functions $\alpha$ and $\beta$ depend on the voltage but not explicitly on time. Both $\bar{u} \in \mathbb{R}$ and $g \in \mathbb{R}$ are constants, called the \emph{reversal potential} and \emph{maximal conductance}, respectively. 

Owing to the original proposal by Hodgkin and Huxley, the constants $g$ and $\bar{u}$ are chosen as $g = 36$ and $\bar{u} = 12$, while the functions $\alpha$ and $\beta$ are given by
\begin{align*}
\alpha(u) &= 0.01 \frac{u+10}{e^{\frac{u+10}{10}}-1} \\
\beta(u) &= 0.125 e^{\frac{u}{80}}.
\end{align*}

Hodgkin and Huxley determined these parameters and functions on the basis of a series of step response experiments where the voltage $u$ was kept at different constant values, ranging from $-6mV$ to $-109mV$, and the corresponding current $y$ was measured, see \cite[Part II]{Hodgkin1952}. Using the model \eqref{potassiumconductance}, we have reconstructed the input-output data of \cite{Hodgkin1952} and the results are displayed in Figure~\ref{fig:constantinputs}. 

\begin{figure}[h!]
\centering
\includegraphics[width=0.5\textwidth]{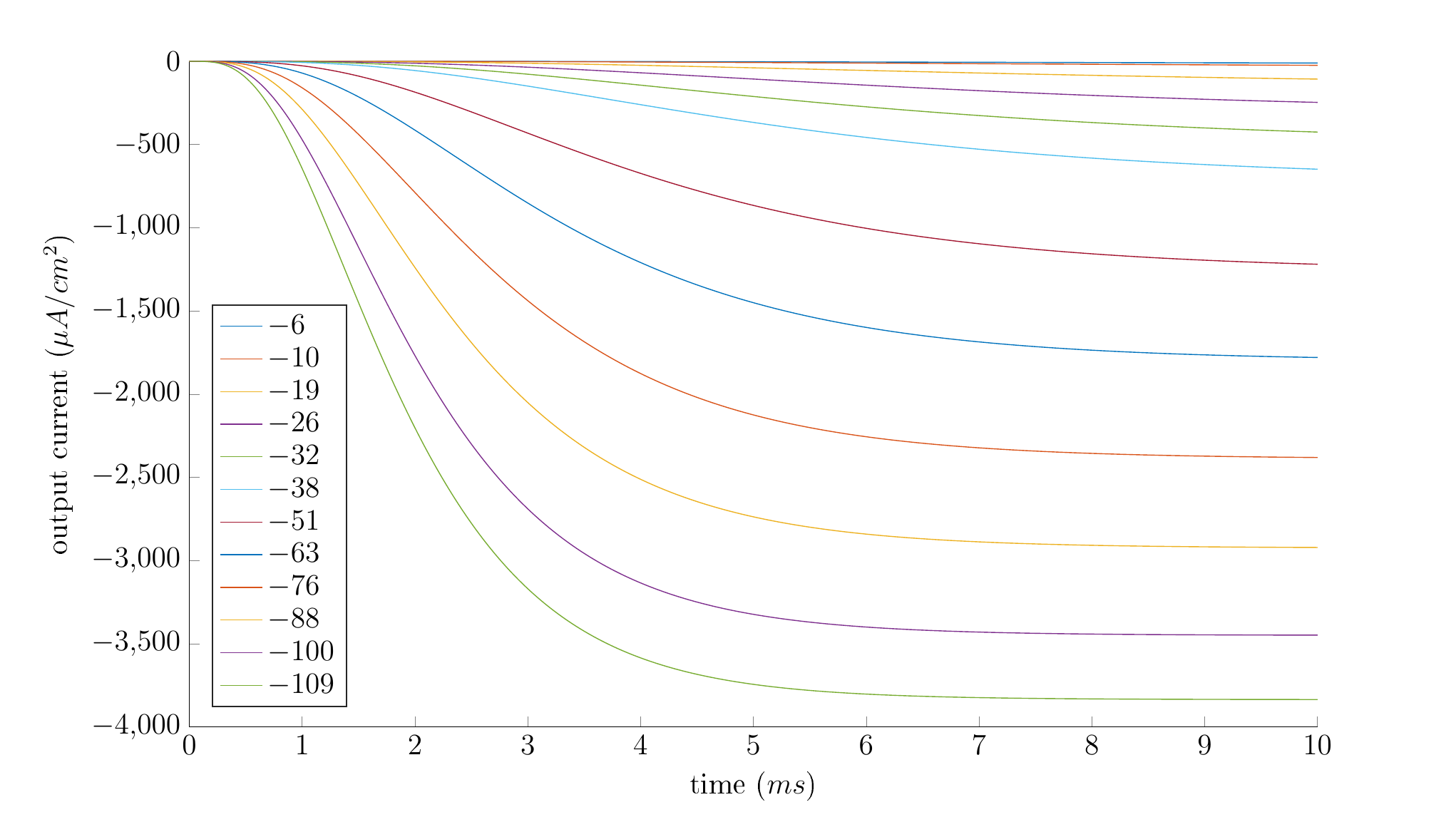}
\caption{Reconstruction of the experimental output currents measured by Hodgkin and Huxley \cite{Hodgkin1952} for various constant input voltages ranging from $-6 mV$ to $-109 mV$.}
\label{fig:constantinputs}
\end{figure}

These experimental data suggest that the input-output behavior of the potassium current defines a \emph{monotone} operator on $L_2([0,10],\mathbb{R})$, i.e. an operator satisfying the incremental IQC \eqref{incrementalaveragenonnegativity} with passive supply \eqref{supplypassivity}. This is the case because the output current $y_j$ corresponding to the constant input voltage $u_j$ satisfies $y_j(t) \leq y_k(t)$ for all $0\leq t \leq 10$ if $u_j \leq u_k$. 

It is interesting to note that the monotonicity property, suggested by all the experimental data, is \emph{not} carried over in Hodgkin and Huxley's model of the potassium current \eqref{potassiumconductance}. This fact can be demonstrated by injecting the two input voltages 
\begin{equation}
\label{V1V2}
\begin{aligned}
u_1(t) &= 25 \sin\left(\frac{12}{50} \pi t\right) \\
u_2(t) &= 25 \cos\left(\frac{14}{75} \pi t\right).
\end{aligned}
\end{equation} 

It can be readily verified that the inner product of $u_1-u_2$ and $y_1 - y_2$ is negative, more precisely, 
$$
\inner{u_1-u_2}{y_1-y_2}_{L_2} \approx -33.51.
$$
We therefore conclude that the model of the potassium current \eqref{potassiumconductance} is not monotone on $L_2([0,10],\mathbb{R})$.

Next, we will leverage the kernel-based approach of this paper to identify a model of the potassium current that \emph{is} monotone. For the sake of computation, we turn our attention to the sampled version of the problem. We take a sampling time of $0.5ms$, and study the evolution of the input and output of the potassium current over the time instants $t_0,t_1,\dots,t_{20}$, where $t_j = 0.5j$. We are thus interested in identifying an operator defined on $\ell_2(\{0,\dots,20\},\mathbb{R})$, mapping the voltage inputs $u(t_j)$ of the potassium channel to the current outputs $y(t_j)$ at the times $t_j$. To do this we will use the data in Figure~\ref{fig:constantinputs} at the time instants $t_j$ ($j=0,1,\dots,20$).

As an intermediate observation, we note that the sampled version of the model \eqref{potassiumconductance} is not monotone on $\ell_2$ either. This can once again be shown using the inputs \eqref{V1V2} restricted to the time instants $t_0,t_1,\dots,t_{20}$. 

We have observed better performance of kernel methods for normalized data. Therefore, we first preprocess the data by dividing all input samples by $a:= 978.7$ and all outputs by $b:=2.539\cdot 10^{4}$. Note that this will not affect our conclusions about monotonicity, for if $a,b >0$ then $R$ is a monotone operator if and only if
$$
\bar{R} : u \mapsto \frac{1}{b} R(au)
$$
is monotone\footnote{We will thus identify $\bar{R}$ and recover $R$ as $R(u) = b\bar{R}(\frac{1}{a}u)$.}. After scaling the data samples, we transform the input and output trajectories using the matrix $M$ in \eqref{Mpassivity} so that we are in the position to apply Theorem~\ref{t:nonexpansive}. We use a separable kernel of the form
$$
K(u,v) = (1+\norm{u-v}_{\ell_2}) e^{-\norm{u-v}_{\ell_2}} \cdot I,
$$
which is nonexpansive by Proposition~\ref{p:nonexpansivekernels}. We choose a value of $\gamma = 4.441\cdot 10^{-4}$ which implies that the left hand side of \eqref{conditiongamma} is $0.9903 < 1$. This results in an identified nonexpansive operator of the form \eqref{estimateregLS}. Lastly, we exploit the fixed point algorithm in Theorem~\ref{t:fixedpoint} to simulate the scattering operator of this identified system. The simulation results are reported in Figure~\ref{fig:identifiedoperator} for different constant input values. Note that the curves are obtained by interpolating between the output values at times $t_j$ ($j = 0,1,\dots,20$). For comparison, we also report the data samples using black crosses. 

We observe that the identified operator explains the data well, with a small misfit for larger input values and near-perfect reconstruction for smaller ones. Importantly, by Proposition~\ref{p:scattering} the identified operator is monotone.

\begin{figure}[h!]
\centering
\includegraphics[width=0.5\textwidth]{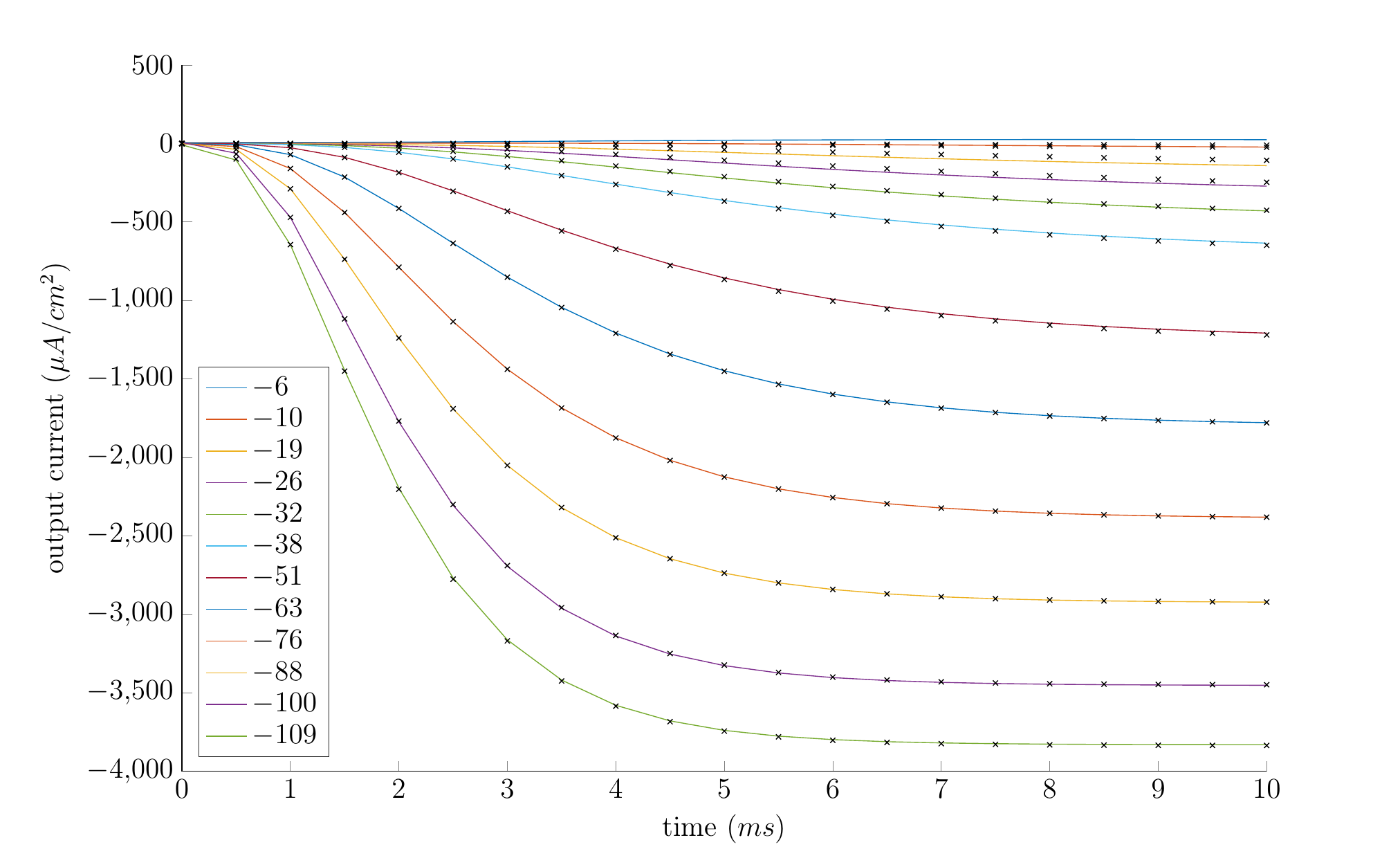}
\caption{Outputs of the identified operator for constant input voltages from $-6 mV$ to $-109 mV$ (in colors). The black crosses correspond to the data samples at times $t_0,t_1,\dots,t_{20}$.}
\label{fig:identifiedoperator}
\end{figure}

\section{Discussion}
\label{s:otherproperties}

\subsection{LTI systems versus trouble-making systems}

The kernel-based modeling framework 
studied in this paper is not necessarily advocated for linear system theory. 
The very success of  LTI system theory  is grounded in the availability of
a computational framework to derive state-space models with desired
input-output properties. Starting with the celebrated Kalman-Yakubovich-Popov
lemma, one of the most successful developments of control theory to date is the
LTI dissipativity theory of Willems \cite{Willems1972b}, that provides a solution to this very realization
problem via linear matrix inequalities. Dissipativity theory led to the foundation of algorithmic
control in the 1990s \cite{Boyd1994}. 

The philosophy of IQC theory is to leverage the LTI computational framework
to the analysis of interconnections that contain "trouble-making" systems in 
addition to the LTI models. Those trouble making systems  encompass everything that violates
the assumptions of finite-dimensional LTI models  (static nonlinearities, time-delays, rate and magnitude
actuator constraints, fading memory amplifiers,...). The characterization of their
input-output properties by IQCs is all what is needed to include them in the
analysis of the interconnected system.

The present paper can be regarded as an algorithmic method to impose incremental
integral quadratic constraints that encode a priori knowledge in a form that
is compatible with IQC system analysis. In that sense, the advocated framework
is input-output by nature and geared towards the decomposition of complex
systems as interconnections of simpler subsystems.

\subsection{Non-incremental system properties}

In the age of state-space modeling, the focus of nonlinear system analysis and design has drifted from incremental
to non-incremental system properties.  Nonlinear dissipativity theory \cite{Willems1972} has been flourishing, leading to system
designs with non-incremental properties such as passivity or finite gain. Classical nonlinear textbooks 
from the nineties (e.g. \cite{Nijmeijer1990,Sepulchre1997,
Isidori1985,Khalil2002}) are grounded in state-space modeling.
They emphasize the analysis  and design  of feedback systems with global but non-incremental properties,
with respect to a specific solution (usually a nominal  ``zero" equilibrium solution).

Data-driven modeling of (physical) dissipative state-space systems has proven to be difficult. In fact, even though established identification techniques such as prediction error and subspace methods have nonlinear extensions \cite{Ljung1978,Westwick1996,Ljung1999,Favoreel1999,Palanthandalam-Madapusi2005,Padoan2015}, these methods are not guaranteed to return dissipative models, even if the data-generating mechanism is known to be dissipative; see also \cite{Maciejowski1995} for similar problems involving stability. It turns out to be hard to take into account dissipativity in parametric identification methods, and the state-of-the-art is limited to linear time-invariant systems \cite{Rapisarda2011,Guta2016,Sivaranjan2019}.

The methods of this paper can also be used to identify systems satisfying (non-incremental) IQCs and dissipativity properties, i.e., items \ref{i:avnonneg} and \ref{i:dis} of Definition~\ref{d:allsystemproperties}.

Analogous to Proposition~\ref{p:scattering}, it can be shown that if $R$ is an operator for which $M_{11}+M_{12}R$ is invertible, then $R$ satisfies the IQC \ref{i:avnonneg} if and only if $S$ is \emph{bounded} with constant $1$, i.e.,
$$
\norm{S(u)}_\calY \leq \norm{u}_\calU
$$
for all $u \in \calU$. This means that   the story of Section~\ref{s:nonexpansive} can be mimicked to identify operators satisfying an IQC through their bounded scatterings. First, define the notion of bounded kernel as follows.

\begin{definition}
We say that $K:\calU\times\calU \to \calB(\calY)$ is \emph{bounded} if $\norm{K(u,u)}_{\calB(\calY)}^{\frac{1}{2}} \leq \norm{u}_\calU$ for all $u\in\calU$.
\end{definition}

Then, the following proposition shows how bounded operators can be identified. We omit the proof since it follows the same lines as the proof of Theorem~\ref{t:nonexpansive}, with the difference of invoking Lemma~\ref{l:boundHu}\ref{l:boundHub} instead of Lemma~\ref{l:boundHuHv}.

\begin{proposition}
\label{p:boundedoperator}
Let $\bar{u} := (u_1,u_2,\dots,u_n) \in \calU^n$ and $\bar{y} := (y_1,y_2,\dots,y_n) \in \calY^n$ be data. 
Consider a symmetric positive semidefinite kernel $K: \calU\times \calU \to \calB(\calY)$, and let $\calH$ be its associated reproducing kernel Hilbert space. Assume that $K$ is bounded. Then the following statements hold:
\begin{enumerate}[label=(\alph*)]
\item Every $H \in \calH$ is bounded.
\item If \eqref{conditiongamma} holds, then the solution $\hat{H} \in \calH$ to \eqref{regLSpf} is bounded with constant $1$.
\end{enumerate}  
\end{proposition}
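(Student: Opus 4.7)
The plan is to mirror the proof of Theorem~\ref{t:nonexpansive} line for line, with the non-incremental bound from Lemma~\ref{l:boundHu}\ref{l:boundHub} replacing the incremental bound from Lemma~\ref{l:boundHuHv}. That lemma provides, for every $H \in \calH$ and every $u \in \calU$, the pointwise estimate $\norm{H(u)}_\calY \leq \norm{H}_\calH \norm{K(u,u)}_{\calB(\calY)}^{\frac{1}{2}}$, which is the one-variable counterpart of \eqref{ineqHuHv} (informally, one sets $v = 0$ and drops the cross terms). To prove statement (a), I would fix any $H \in \calH$ and $u \in \calU$, apply this bound, and then invoke the hypothesis that $K$ is bounded to replace $\norm{K(u,u)}_{\calB(\calY)}^{\frac{1}{2}}$ by $\norm{u}_\calU$. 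This immediately yields $\norm{H(u)}_\calY \leq \norm{H}_\calH \norm{u}_\calU$, so $H$ is bounded with constant $\norm{H}_\calH$.

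For statement (b), I would rely on the standard representer expansion $\hat{H} = \sum_{i=1}^n K(\cdot,u_i) c_i$ from \eqref{estimateregLS}, whose coefficients satisfy $\bar{c} = (G+\gamma I)\inv \bar{y}$ by \eqref{lineqregLS}. Repeating the calculation in the proof of Theorem~\ref{t:nonexpansive}\ref{t:nonexpansivenormHhat} verbatim (expand the inner product, apply the reproducing property term by term, and recognize the Gram operator) gives
\begin{equation*}
\norm{\hat{H}}_\calH^2 = \inner{\bar{c}}{G \bar{c}}_{\calY^n} = \norm{G\sq(G+\gamma I)\inv \bar{y}}_{\calY^n}^2.
\end{equation*}
Under condition \eqref{conditiongamma}, this norm is at most $1$, and then applying part (a) to $\hat{H}$ yields $\norm{\hat{H}(u)}_\calY \leq \norm{u}_\calU$ for every $u \in \calU$; that is, $\hat{H}$ is bounded with constant $1$, as required.

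I do not expect any genuine obstacle here: the entire argument is a clean non-incremental specialization of machinery already assembled in the paper. Lemma~\ref{l:boundHu} supplies the pointwise bound, the bounded-kernel hypothesis converts the kernel quantity into $\norm{u}_\calU$, and the Gram-operator identity for $\norm{\hat{H}}_\calH$ is literally the same computation that underlies Theorem~\ref{t:nonexpansive}\ref{t:nonexpansivenormHhat}. The only detail worth double-checking is that the appendix's statement of Lemma~\ref{l:boundHu}\ref{l:boundHub} indeed bounds $\norm{H(u)}_\calY$ by $\norm{H}_\calH \norm{K(u,u)}_{\calB(\calY)}^{\frac{1}{2}}$ in the operator-valued setting used here, but since that lemma is precisely the non-incremental analog invoked in the text, this should require no new work.
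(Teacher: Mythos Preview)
Your proposal is correct and matches the paper's approach exactly: the paper explicitly omits the proof, stating that it ``follows the same lines as the proof of Theorem~\ref{t:nonexpansive}, with the difference of invoking Lemma~\ref{l:boundHu}\ref{l:boundHub} instead of Lemma~\ref{l:boundHuHv},'' which is precisely what you do.
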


A drawback of Proposition~\ref{p:boundedoperator} when compared to Theorem~\ref{t:nonexpansive} is that the ``scattering operator" \eqref{operatorT} of a bounded $\hat{H}$ may not exist (even if \eqref{conditiongamma} holds strictly). Furthermore, even if this scattering operator exists, it is not straightforward to simulate it in general, as fixed point algorithms are not directly applicable.  

These intricacies perhaps provide further evidence of the non-trivial gap between non-incremental and incremental properties in nonlinear system theory. In the present framework, the focus on incremental properties appears to be  essential to the algorithmic tractability of the approach. This observation is aligned with recent developments in the analysis and simulation of large-scale nonlinear circuits \cite{Chaffey2021}. 
 
\section{Conclusions}
\label{s:conclusions}

In this paper, we have introduced a modeling framework that combines the fitting requirements of data-based learning with the input-output requirements of system analysis. The framework is grounded on regularized least squares in reproducing kernel Hilbert spaces of operators. As our main results, we have shown how the kernel and regularization parameter can be selected in order to ensure incremental integral quadratic constraints, dissipativity and causality properties of the identified operator. These properties are central to  the analysis of feedback systems \cite{Desoer1975,vanderSchaft2017}, the design of model predictive controllers \cite{Diehl2011,Muller2015}, and the convergence of optimization algorithms \cite{Lessard2016}. 
The limitation of state-space modeling to achieve those objectives was illustrated with the simplest models of nonlinear circuits, in particular the celebrated model of the potassium current of  Hodgkin-Huxley model \cite{Hodgkin1952}.

The results of this paper are relevant beyond identification and control. In a machine learning context, it is widely recognized that (deep) neural networks have impressive generalization performance but lack robustness. Therefore, Lipschitz bounds are computed as a proxy for robustness \cite{Tsuzuku2018,Fazlyab2019}, and algorithms have been developed to train neural networks with given Lipschitz constants \cite{Revay2020,Pauli2022}. Related to this, Theorem~\ref{t:nonexpansive} of the current paper is directly applicable to learn operators with given Lipschitz constants on general Hilbert spaces.

There are many questions to explore beyond the proposal of this paper. Data-driven control has been popular in the recent years but so far primarily restricted to linear models. The present paper opens an avenue for the design of data-driven control design including ``trouble-making" blocks that violate linearity or invariance assumptions. Also, 
 we have focused on identifying operators satisfying integral quadratic constraints with a \emph{given} supply rate $\Phi$. It would be of interest to consider the simultaneous design of $\Phi$ and the identified operator $\hat{H}$. This problem would be related to the question of learning dissipativity properties from data \cite{Montenbruck2016,Romer2017}. The additional access to a model $\hat{H}$ would, however, open up new data-driven simulation and predictive control strategies. 

\appendix

In this appendix, we provide the background material on reproducing kernel Hilbert spaces that we need for the paper. Thus far, the systems and control community has primarily focused on RKHSs of functions with scalar outputs, see e.g. \cite{Pillonetto2010,Pillonetto2011,Pillonetto2014,
Dinuzzo2015,Bottegal2016,Bouvrie2017,
Ramaswamy2018,Blanken2020,Hamzi2021}. Since we study operators mapping into a (possibly infinite-dimensional) Hilbert space $\calY$, we will provide a sufficiently detailed review of the general theory of operator-valued reproducing kernels. For this background material we are indebted to Micchelli and Pontil \cite{Micchelli2004,
Micchelli2005}, Carmeli \emph{et al.} \cite{Carmeli2006}, Caponnetto \emph{et al.}
\cite{Caponnetto2008} and Kadri \emph{et al.} \cite{Kadri2016}. We will refer to several results in this literature, and give our own proofs whenever appropriate.

\subsection{Basic definitions and properties}
\label{s:basicdefinitionsRKHS}
Throughout, we let $\calU$ and $\calY$ be real Hilbert spaces\footnote{We note that all results of this section remain true if $\calU$ is merely a \emph{subset} of a Hilbert space. In this case, the notation $\inner{\cdot}{\cdot}_\calU$ and $\norm{\cdot}_\calU$ should be understood as the \emph{restriction} of the inner product and norm to $\calU$.}. In addition, we consider a Hilbert space $\calH$ of operators from the set of inputs $\calU$ to the output space $\calY$. 

\begin{definition}
\label{d:reproducingkernel}
A mapping $K:\calU\times\calU \to \calB(\calY)$ is called a \emph{reproducing kernel} for $\calH$ if the following two properties hold:
\begin{itemize}
\item $K(\cdot,u)y : \calU \to \calY$ is a member of $\calH$ for all $u\in\calU$ and $y \in \calY$;
\item The \emph{reproducing property} holds: for all $u\in\calU$, $y \in \calY$ and $H\in\calH$:
\begin{equation}
\label{repprop}
\inner{y}{H(u)}_\calY = \inner{H}{K(\cdot,u)y}_\calH.
\end{equation}
\end{itemize}
We say that $\calH$ is a \emph{reproducing kernel Hilbert space} if it admits a reproducing kernel. 
\end{definition}

Definition~\ref{d:reproducingkernel} forms the basis for our discussion. In the literature, other definitions of RKHSs have been given \cite{Micchelli2005}, and we point out the connection to Definition~\ref{d:reproducingkernel} in Theorem~\ref{t:contlinfunc}.

\begin{remark}
In the case that $\calH$ consists of \emph{scalar-valued} functions from $\calU$ to $\calY = \mathbb{R}$, also the reproducing kernel $K$ is real-valued. In this case, by using the standard inner product $\inner{a}{b}_\mathbb{R} = ab$, \eqref{repprop} is reduced to $H(u) = \inner{H}{K(\cdot,u)}_\calH$ for all $u \in \calU$ and $H \in \calH$, which is also considered, e.g. in the classical work by Aronszajn \cite{Aronszajn1950}.
\end{remark}

Next, we state the following basic lemma whose proof essentially follows from \cite[Prop. 2.1]{Micchelli2005}. We will also provide a proof since the lemma plays an important role in the developments of this paper. 

\begin{lemma}
\label{l:boundHu}
Let $K$ be a reproducing kernel for $\calH$. The following properties hold:
\begin{enumerate}[label=(\alph*)]
\item For any $u \in \calU$ the linear operator $K_u \in \calL(\calY,\calH)$, defined by
$$
K_u : y \mapsto K(\cdot,u)y,
$$
is bounded, i.e., $K_u \in \calB(\calY,\calH)$.
\label{l:boundHua}
\item For all $u\in \calU$ and $H \in \calH$: 
$$
\norm{H(u)}_\calY \leq \norm{H}_\calH \norm{K(u,u)}_{\calB(\calY)}^{\frac{1}{2}}.
$$
\label{l:boundHub}
\end{enumerate}
\end{lemma}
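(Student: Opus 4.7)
The plan is to prove the two parts in order, using part (a) as a stepping stone for part (b), with the reproducing property \eqref{repprop} as the only real ingredient beyond Cauchy--Schwarz.

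For part \ref{l:boundHua}, I would first check linearity, which is immediate from the fact that $K(\cdot,u)$ takes values in $\calB(\calY)$, so $y \mapsto K(\cdot,u)y$ is linear. To get boundedness, the natural move is to compute $\norm{K_u y}_\calH^2 = \inner{K(\cdot,u)y}{K(\cdot,u)y}_\calH$ and then apply the reproducing property \eqref{repprop} with $H = K(\cdot,u)y$. This turns the inner product in $\calH$ into $\inner{y}{(K(\cdot,u)y)(u)}_\calY = \inner{y}{K(u,u)y}_\calY$. One application of Cauchy--Schwarz in $\calY$ and the definition of the operator norm $\norm{K(u,u)}_{\calB(\calY)}$ then gives
\begin{equation*}
\norm{K_u y}_\calH^2 \leq \norm{K(u,u)}_{\calB(\calY)} \norm{y}_\calY^2,
\end{equation*}
so $K_u \in \calB(\calY,\calH)$ with $\norm{K_u}_{\calB(\calY,\calH)} \leq \norm{K(u,u)}_{\calB(\calY)}^{\frac{1}{2}}$.

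For part \ref{l:boundHub}, I would apply the reproducing property \eqref{repprop} with the choice $y = H(u) \in \calY$, turning $\norm{H(u)}_\calY^2 = \inner{H(u)}{H(u)}_\calY$ into $\inner{H}{K(\cdot,u)H(u)}_\calH = \inner{H}{K_u H(u)}_\calH$. Then Cauchy--Schwarz in $\calH$ followed by the bound on $K_u$ from part \ref{l:boundHua} yields
\begin{equation*}
\norm{H(u)}_\calY^2 \leq \norm{H}_\calH \norm{K(u,u)}_{\calB(\calY)}^{\frac{1}{2}} \norm{H(u)}_\calY,
\end{equation*}
and dividing by $\norm{H(u)}_\calY$ gives the claim (the case $H(u)=0$ is trivial).

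I do not anticipate a real obstacle here: the argument is entirely a ``reproducing property + Cauchy--Schwarz'' bootstrap of the classical scalar RKHS estimate $|H(u)| \leq \norm{H}_\calH \sqrt{K(u,u)}$, adapted to the operator-valued setting by carefully tracking where the $\calB(\calY)$ norm enters via $\inner{y}{K(u,u)y}_\calY \leq \norm{K(u,u)}_{\calB(\calY)} \norm{y}_\calY^2$. The only mild care needed is to ensure the right ordering of the two Cauchy--Schwarz inequalities (one in $\calY$ to bound $\norm{K_u y}_\calH$, one in $\calH$ to bound $\norm{H(u)}_\calY$), which is what makes the two parts naturally fall in sequence.
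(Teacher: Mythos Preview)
Your proposal is correct and follows essentially the same approach as the paper's proof: both parts are handled by applying the reproducing property \eqref{repprop} to turn the relevant norm-squared into an inner product, then invoking Cauchy--Schwarz, with the bound $\norm{K_u}_{\calB(\calY,\calH)} \leq \norm{K(u,u)}_{\calB(\calY)}^{1/2}$ from part \ref{l:boundHua} feeding into part \ref{l:boundHub}. Your explicit handling of the case $H(u)=0$ is a small addition the paper leaves implicit.
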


\begin{proof}
We first prove statement \ref{l:boundHua}. Note that for any $u \in \calU$ and $y \in \calY$, the reproducing property and Cauchy-Schwartz inequality imply
\begin{align*}
\norm{K_u y}_\calH^2 = \inner{K_u y}{K_u y}_\calH &= \inner{y}{K(u,u)y}_\calY \\ &\leq \norm{y}_\calY^2 \norm{K(u,u)}_{\calB(\calY)},
\end{align*}
and thus $K_u$ is bounded and $\norm{K_u}_{\calB(\calY,\calH)} \leq \norm{K(u,u)}_{\calB(\calY)}^{\frac{1}{2}}$. Now, to prove \ref{l:boundHub}, note that
\begin{align*}
\norm{H(u)}_\calY^2 &= \inner{H(u)}{H(u)}_\calY = \inner{H}{K(\cdot,u)H(u)}_\calH \\ &\leq \norm{H}_\calH \norm{K_u}_{\calB(\calY,\calH)} \norm{H(u)}_\calY,
\end{align*}
so that
$$
\norm{H(u)}_\calY \leq \norm{H}_\calH \norm{K_u}_{\calB(\calY,\calH)} \leq \norm{H}_\calH \norm{K(u,u)}_{\calB(\calY)}^{\frac{1}{2}},
$$
which proves the lemma.
\end{proof}

One of the interesting consequences of Lemma~\ref{l:boundHu}\ref{l:boundHub} is that convergence \emph{in norm} implies \emph{pointwise} convergence in reproducing kernel Hilbert spaces. 

Subsequently, we note that $\calH$ has a reprocing kernel if and only if a suitable ``evaluation" functional is continuous. This equivalence is well-known for scalar-valued kernels \cite{Aronszajn1950}. Theorem~\ref{t:contlinfunc} articulates this fact for operator-valued kernels. The theorem shows the equivalence of the definition of RKHSs in \cite[Def. 2.1]{Micchelli2005} and Definition~\ref{d:reproducingkernel}.

\begin{theorem}
\label{t:contlinfunc}
$\calH$ is a reproducing kernel Hilbert space if and only if for every $u\in \calU$ and $y\in\calY$ the linear functional 
$$
L_{u,y} : H \mapsto \inner{y}{H(u)}_\calY
$$ 
is continuous, equivalently, $L_{u,y} \in \calB(\calH,\mathbb{R})$.
\end{theorem}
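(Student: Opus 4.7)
The statement is an ``if and only if'' so I would prove each direction separately, with the forward implication being a direct consequence of the reproducing property and Lemma~\ref{l:boundHu}, while the reverse direction requires constructing a kernel from the assumption.

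For the forward direction, assume $\calH$ admits a reproducing kernel $K$. Then for every $u\in\calU$, $y\in\calY$ and $H\in\calH$ the reproducing property \eqref{repprop} gives
\[
|L_{u,y}(H)| = |\inner{H}{K(\cdot,u)y}_\calH| \leq \norm{H}_\calH \norm{K_u y}_\calH,
\]
and by Lemma~\ref{l:boundHu}\ref{l:boundHua} we have $\norm{K_u y}_\calH \leq \norm{K(u,u)}_{\calB(\calY)}^{1/2}\norm{y}_\calY$. Hence $L_{u,y}$ is a bounded linear functional on $\calH$.

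For the reverse direction, the plan is to build the kernel from the evaluation operators. Fix $u \in \calU$ and define the linear \emph{evaluation operator} $E_u : \calH \to \calY$ by $E_u(H) := H(u)$. The first (and key) step is to show that $E_u \in \calB(\calH,\calY)$, which I would do via the closed graph theorem: if $H_n \to H$ in $\calH$ and $E_u(H_n) \to z$ in $\calY$, then for every $y \in \calY$ continuity of the inner product gives $\inner{y}{H_n(u)}_\calY \to \inner{y}{z}_\calY$, while the assumed continuity of $L_{u,y}$ gives $\inner{y}{H_n(u)}_\calY = L_{u,y}(H_n) \to L_{u,y}(H) = \inner{y}{H(u)}_\calY$. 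Since $y$ was arbitrary, $z = H(u) = E_u(H)$, so the graph of $E_u$ is closed and hence $E_u$ is bounded. Once $E_u$ is bounded, its adjoint $E_u^* \in \calB(\calY,\calH)$ exists.

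Given the bounded evaluation operators, define
\[
K : \calU \times \calU \to \calB(\calY), \qquad K(v,u) := E_v E_u^*.
\]
It remains to verify the two items of Definition~\ref{d:reproducingkernel}. For the first, note that for fixed $u,y$ the element $E_u^* y \in \calH$ is itself an operator, and evaluating it at $v$ gives $(E_u^* y)(v) = E_v(E_u^* y) = K(v,u)y$; thus $K(\cdot,u)y = E_u^* y \in \calH$, so the mapping $v \mapsto K(v,u)y$ lies in $\calH$. For the reproducing property, the adjoint identity yields
\[
\inner{H}{K(\cdot,u)y}_\calH = \inner{H}{E_u^* y}_\calH = \inner{E_u H}{y}_\calY = \inner{y}{H(u)}_\calY,
\]
which is \eqref{repprop}.

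\textbf{Main obstacle.} The only non-trivial point is upgrading the given scalar continuity of $L_{u,y}$ (for each fixed $y$) to boundedness of the vector-valued evaluation operator $E_u$. The closed graph theorem handles this cleanly by exploiting the fact that $\calY$ is Hilbert (so weak equality of two vectors implies equality). Everything else is bookkeeping with adjoints.
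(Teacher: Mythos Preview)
Your proof is correct. The forward direction is essentially identical to the paper's (you cite Lemma~\ref{l:boundHu}\ref{l:boundHua}, the paper cites \ref{l:boundHub}, but the bound is the same). The reverse direction, however, follows a genuinely different route. The paper applies the Riesz representation theorem directly to each bounded functional $L_{u,y}$, obtaining an element $K_{u,y}\in\calH$; it then observes that $K_{u,y}$ is linear in $y$, defines $K(u,v)y := K_{v,y}(u)$, and must still argue that $K(u,v)\in\calB(\calY)$, which it does by first establishing the symmetry relation $\inner{z}{K(v,u)y}_\calY = \inner{y}{K(u,v)z}_\calY$ and then invoking an external result (essentially Hellinger--Toeplitz). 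Your approach instead front-loads the work: you use the closed graph theorem to upgrade the scalar continuity of each $L_{u,y}$ to boundedness of the vector-valued evaluation $E_u$, after which $K(v,u) := E_v E_u^*$ is automatically in $\calB(\calY)$ and the two defining properties of a reproducing kernel drop out immediately from the adjoint identity. Your route is arguably more self-contained (no external citation needed for boundedness of $K(u,v)$), at the cost of invoking the closed graph theorem rather than only Riesz; the paper's route stays closer to the elementary Riesz construction but needs the extra symmetry-implies-boundedness step.
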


\begin{proof}
For linear operators, the equivalence of boundedness and continuity is well-known. As such, we only prove that $\calH$ is a reproducing kernel Hilbert space if and only if $L_{u,y} \in \calB(\calH,\mathbb{R})$. The necessity is straightforward to prove. Indeed, suppose that $\calH$ is a reproducing kernel Hilbert space and let $K$ be a reproducing kernel. Let $u\in\calU$ and $y\in\calY$. Then by Lemma~\ref{l:boundHu}\ref{l:boundHub} and Cauchy-Schwartz, 
$$
|L_{u,y}(H)| \leq \norm{K(u,u)}_{\calB(\calY)}^{\frac{1}{2}} \norm{y}_\calY \norm{H}_\calH,
$$
that is, $L_{u,y} \in \calB(\calH,\mathbb{R})$. 

Next, we prove sufficiency. Let $u\in\calU$ and $y\in\calY$. Suppose that $L_{u,y} \in \calB(\calH,\mathbb{R})$. By Riesz representation theorem \cite[Fact 2.17]{Bauschke2011}, there exists an operator $K_{u,y} \in \calH$ such that 
\begin{equation}
\label{Riesz}
\inner{y}{H(u)}_\calY = \inner{H}{K_{u,y}}_\calH.
\end{equation}
Since $K_{u,y}$ is linear in $y$, we can define $K_u \in \calL(\calY,\calH)$ as $K_u y := K_{u,y}$. Define the mapping $K :\calU\times \calU \to \calL(\calY)$ as $K : (u,v) \mapsto (K_v (\cdot))u$. Note that $K(\cdot,u)y = K_{u,y} \in \calH$ for all $u \in \calU$ and $y \in \calY$. Clearly,
$$
\inner{K(\cdot,u)y}{K(\cdot,v)z}_\calH = \inner{K(\cdot,v)z}{K(\cdot,u)y}_\calH
$$
for all $u,v\in\calU$ and $y,z\in \calY$. By exploiting the property~\eqref{Riesz} on both sides, we obtain
\begin{equation}
\label{adjoint}
\inner{z}{K(v,u)y}_\calY = \inner{y}{K(u,v)z}_\calY.
\end{equation}
By \cite[p. 48]{Akhiezer2013}, this implies that $K(u,v) \in \calB(\calY)$. We have thus shown that $K$ is a reproducing kernel for $\calH$, so $\calH$ is a reproducing kernel Hilbert space. This proves the theorem. 
\end{proof}

Lastly, it is also possible to prove that every reproducing kernel Hilbert space has \emph{exactly one} reproducing kernel (cf. \cite[Thm. 1]{Kadri2016}).

\begin{proposition}
\label{p:unique}
If $\calH$ admits a reproducing kernel then it is unique. 
\end{proposition}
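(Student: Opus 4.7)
The plan is to suppose, for contradiction or rather for direct comparison, that $\calH$ admits two reproducing kernels $K_1$ and $K_2$, and show that they must coincide as mappings $\calU \times \calU \to \calB(\calY)$. The key tool is the reproducing property \eqref{repprop}, which I will apply twice and then exploit non-degeneracy of the inner products on $\calH$ and on $\calY$.

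First I would fix arbitrary $u \in \calU$ and $y \in \calY$, and observe that, by Definition~\ref{d:reproducingkernel}, both $K_1(\cdot,u)y$ and $K_2(\cdot,u)y$ lie in $\calH$, so their difference $D_{u,y} := K_1(\cdot,u)y - K_2(\cdot,u)y$ is again an element of $\calH$. Applying \eqref{repprop} with $K_1$ and then with $K_2$, for every $H \in \calH$ I have
\begin{equation*}
\inner{H}{K_1(\cdot,u)y}_\calH = \inner{y}{H(u)}_\calY = \inner{H}{K_2(\cdot,u)y}_\calH,
\end{equation*}
hence $\inner{H}{D_{u,y}}_\calH = 0$ for all $H \in \calH$. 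Choosing $H = D_{u,y}$ yields $\norm{D_{u,y}}_\calH^2 = 0$, so $D_{u,y}$ is the zero element of $\calH$.

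From here I would pass from this norm statement to a pointwise statement using Lemma~\ref{l:boundHu}\ref{l:boundHub}: since $D_{u,y} = 0$ in $\calH$, it follows that $D_{u,y}(v) = 0$ in $\calY$ for every $v \in \calU$, i.e.\ $K_1(v,u)y = K_2(v,u)y$. As $y \in \calY$ was arbitrary, this shows $K_1(v,u) = K_2(v,u)$ as elements of $\calB(\calY)$ for every pair $(u,v) \in \calU \times \calU$, and therefore $K_1 = K_2$.

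There is essentially no serious obstacle here; the one subtlety is to be careful about the distinction between $D_{u,y}$ being zero as an element of $\calH$ (which is what the reproducing property gives us directly) and $D_{u,y}$ being the zero operator pointwise. This is automatic because $\calH$ is a space of operators rather than equivalence classes, and Lemma~\ref{l:boundHu}\ref{l:boundHub} guarantees $\norm{H(v)}_\calY \leq \norm{H}_\calH \norm{K(v,v)}_{\calB(\calY)}^{1/2}$ for every $H \in \calH$, so $\norm{H}_\calH = 0$ forces $H(v) = 0$ for all $v$. Everything else is a routine use of the reproducing identity and bilinearity.
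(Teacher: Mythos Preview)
Your argument is correct and is the standard proof of uniqueness of the reproducing kernel: use the reproducing property of both $K_1$ and $K_2$ to see that $K_1(\cdot,u)y - K_2(\cdot,u)y$ is orthogonal to all of $\calH$, hence zero. The paper itself does not supply a proof of this proposition but only cites \cite[Thm.~1]{Kadri2016}; your write-up matches that classical argument, so there is nothing to compare against. One minor remark: the appeal to Lemma~\ref{l:boundHu}\ref{l:boundHub} is not strictly needed, since once $\norm{D_{u,y}}_\calH = 0$ the element $D_{u,y}$ is the zero element of $\calH$, and the zero element of a Hilbert space of operators is by definition the zero operator pointwise; but invoking the lemma (with either $K_1$ or $K_2$ playing the role of $K$) does no harm.
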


\subsection{Positive semidefinite kernels and Moore-Aronszajn}
\label{s:Moore-Aronszajn}

Next, we investigate symmetry and positive semidefiniteness of mappings $K: \calU\times \calU \to \calB(\calY)$. These properties turn out to completely characterize the class of reproducing kernels. 

\begin{definition}
A mapping $K: \calU\times \calU \to \calB(\calY)$ is called \begin{itemize}
\item \emph{symmetric} if $K(u,v)^* = K(v,u)$.
\item \emph{positive semidefinite} if for all $n \in \mathbb{N}$, $u_1,u_2,\dots,u_n \in \calU$ and $y_1,y_2,\dots,y_n \in \calY$ we have that
\begin{equation}
\label{defpossemidef}
\sum_{i=1}^n \sum_{j=1}^n \inner{y_i}{K(u_i,u_j)y_j}_\calY \geq 0.
\end{equation} 
\end{itemize}
\end{definition}

Symmetry and positive semidefiniteness can also be expressed in terms of the so-called Gram operator associated with $K$. To do this, we define 
$$
\calY^n := \underbrace{\calY \times \calY \times \cdots \times \calY}_{n \text{ times}}.
$$
Since $\calY$ is a Hilbert space, $\calY^n$ is a Hilbert space with inner product 
$$
\inner{(y_1,y_2,\dots,y_n)}{(z_1,z_2,\dots,z_n)}_{\calY^n} := \sum_{i = 1}^n \inner{y_i}{z_i}_\calY.
$$
For $u_1,u_2,\dots,u_n \in \calU$ the \emph{Gram operator} $G: \calY^n \to \calY^n$ is defined as 
$$
G(y_1,\dots,y_n) = \left(\sum_{j=1}^n K(u_1,u_j)y_j,\dots,\sum_{j=1}^n K(u_n,u_j)y_j\right).
$$

We then have the following lemma.

\begin{lemma}
\label{l:spsd}
The mapping $K: \calU\times\calU \to \calB(\calY)$ is 
\begin{enumerate}[label=(\alph*)]
\item symmetric if and only if $G$ is self-adjoint for all $n \in \mathbb{N}$ and $u_1,u_2,\dots,u_n \in \calU$.
\label{l:s}
\item positive semidefinite if and only if $G$ is a positive operator for all $n \in \mathbb{N}$ and $u_1,u_2,\dots,u_n \in \calU$.
\label{l:psd}  
\end{enumerate}
\end{lemma}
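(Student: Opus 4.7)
The plan is to prove both parts by directly unwinding the definition of the Gram operator against the inner product on $\calY^n$, reducing everything to the bilinear/quadratic form $\sum_{i,j}\inner{y_i}{K(u_i,u_j)y_j}_\calY$.

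For part \ref{l:s}, I would first compute, for arbitrary $\bar y = (y_1,\dots,y_n)$ and $\bar z = (z_1,\dots,z_n)$ in $\calY^n$,
\begin{align*}
\inner{\bar y}{G\bar z}_{\calY^n} &= \sum_{i,j} \inner{y_i}{K(u_i,u_j)z_j}_\calY, \\
\inner{G\bar y}{\bar z}_{\calY^n} &= \sum_{i,j} \inner{K(u_i,u_j)y_j}{z_i}_\calY = \sum_{i,j} \inner{y_j}{K(u_i,u_j)^* z_i}_\calY.
\end{align*}
For the sufficiency direction, assuming $K(u_i,u_j)^* = K(u_j,u_i)$, relabel the dummy indices $(i,j)\mapsto(j,i)$ in the second expression to see the two sums agree, establishing $G = G^*$. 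For necessity, specialize to $n=2$ with $u_1=u$, $u_2=v$ arbitrary, and pick $\bar y = (0,y)$, $\bar z = (z,0)$; then self-adjointness of $G$ collapses to $\inner{z}{K(v,u)y}_\calY = \inner{K(u,v)z}{y}_\calY = \inner{z}{K(u,v)^*y}_\calY$ for all $y,z\in\calY$, which forces $K(v,u) = K(u,v)^*$.

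For part \ref{l:psd}, the calculation is even more direct: setting $\bar z = \bar y$ in the first identity above gives
\begin{equation*}
\inner{\bar y}{G\bar y}_{\calY^n} = \sum_{i=1}^n\sum_{j=1}^n \inner{y_i}{K(u_i,u_j)y_j}_\calY,
\end{equation*}
so $\inner{\bar y}{G\bar y}_{\calY^n} \geq 0$ for every choice of $\bar y\in\calY^n$ exactly when the defining sum \eqref{defpossemidef} is nonnegative. Quantifying over all $n$ and all tuples $u_1,\dots,u_n$ on both sides gives the equivalence. Note that positivity in the paper's sense is just nonnegativity of the quadratic form, so no extra self-adjointness claim needs to be handled here.

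There is essentially no obstacle: the lemma is a direct bookkeeping translation between the pointwise definitions of symmetry/positive semidefiniteness of $K$ and the operator-level notions for $G$. The only mild subtlety is in the necessity of part \ref{l:s}, where one must choose a specific $n$ and specific vectors to extract the adjoint identity $K(v,u) = K(u,v)^*$ for arbitrary $u,v\in\calU$ from the hypothesis that self-adjointness holds for \emph{all} $n$ and all selections of points; the choice $n=2$ with one-hot vectors suffices.
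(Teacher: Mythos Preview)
Your proposal is correct and follows essentially the same approach as the paper: both arguments compute the bilinear form $\inner{\bar y}{G\bar z}_{\calY^n}$ directly, reduce part \ref{l:psd} to a tautology, and for the necessity in part \ref{l:s} specialize to $n=2$ with one-hot vectors to isolate the adjoint identity $K(v,u)=K(u,v)^*$. The only cosmetic difference is that the paper orders the two parts in reverse and writes out $G(0,y)$ and $G(z,0)$ explicitly rather than substituting into the general bilinear formula.
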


\begin{proof}
First, we note that statement \ref{l:psd} follows directly from the definition of the Gram operator and the inner product on $\calY^n$. 

Thus, we focus on proving \ref{l:s}. Suppose that $G$ is self-adjoint for all $n \in \mathbb{N}$ and all $u_1,u_2,\dots,u_n \in \calU$. In particular, for $n = 2$ we can derive the following. Let $u_1,u_2 \in \calU$ be arbitrary. For any $y,z \in \calY$, we have that
\begin{align*}
G(0,y) &= (K(u_1,u_2)y,K(u_2,u_2)y), \: \text{ and} \\
G(z,0) &= (K(u_1,u_1)z,K(u_2,u_1)z).
\end{align*}
In addition, as $G = G^*$ we have
$$
\inner{(z,0)}{G(0,y)}_{\calY^2} = \inner{G(z,0)}{(0,y)}_{\calY^2}
$$
for all $y,z \in \calY$. This implies that
$$
\inner{z}{K(u_1,u_2)y}_\calY = \inner{y}{K(u_2,u_1)z}_\calY.
$$
That is, $K(u_1,u_2)^* = K(u_2,u_1)$ thus $K$ is symmetric. 

Conversely, let $K$ be symmetric. Let $u_1,u_2,\dots,u_n \in \calU$ and $(y_1,y_2,\dots,y_n),(z_1,z_2,\dots,z_n)\in\calY^n$, and note that
\begin{align*}
\inner{(z_1,z_2,\dots,z_n)}{G(y_1,y_2,\dots,y_n)}_{\calY^n} &= \\
\sum_{i = 1}^n \inner{z_i}{\sum_{j=1}^n K(u_i,u_j)y_j}_\calY 
&= \\ \sum_{i=1}^n \sum_{j=1}^n \inner{z_i}{K(u_i,u_j)y_j}_\calY &= \\ 
\sum_{i=1}^n \sum_{j=1}^n \inner{K(u_j,u_i)z_i}{y_j}_\calY &= \\
\sum_{j=1}^n \inner{\sum_{i=1}^n K(u_j,u_i)z_i}{y_j}_\calY &= \\
 \inner{G(z_1,z_2,\dots,z_n)}{(y_1,y_2,\dots,y_n)}_{\calY^n}.
\end{align*}
Therefore, $G = G^*$ which proves the lemma. 
\end{proof}

\begin{remark}
Clearly, if $K:\calU\times \calU \to \mathbb{R}$ is \emph{scalar-valued}, positive semidefiniteness of the kernel is equivalent to that of the \emph{Gram matrix}
\begin{equation}
\label{Gram}
\begin{bmatrix}
K(u_1,u_1) & K(u_1,u_2) & \cdots & K(u_1,u_n) \\ 
K(u_2,u_1) & K(u_2,u_2) & \cdots & K(u_2,u_n) \\ 
\vdots & \vdots & \ddots & \vdots \\
K(u_n,u_1) & K(u_n,u_2) & \cdots & K(u_n,u_n)
\end{bmatrix} 
\end{equation}
for all $n \in \mathbb{N}$ and $u_1,u_2,\dots,u_n \in \calU$. 
\end{remark}

The following elementary result, that follows directly from the reproducing property, highlights that reproducing kernels are both symmetric and positive semidefinite.  

\begin{proposition}
\label{p:spsdkernel}
If $K: \calU\times \calU \to \calB(\calY)$ is the reproducing kernel of some reproducing kernel Hilbert space, then it is symmetric and positive semidefinite. 
\end{proposition}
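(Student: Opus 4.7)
My plan is to derive both properties directly from the reproducing property \eqref{repprop}, the fact that the inner product on a real Hilbert space is symmetric, and the fact that $\norm{H}_\calH^2 \geq 0$ for every $H \in \calH$. The key observation is that elements of the form $K(\cdot,u)y$ lie in $\calH$ and the reproducing property converts inner products in $\calY$ (involving $K$) into inner products in $\calH$.

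For symmetry, I would fix $u,v \in \calU$ and $y,z \in \calY$ and apply the reproducing property twice. On one hand, $\inner{y}{K(u,v)z}_\calY = \inner{y}{(K(\cdot,v)z)(u)}_\calY = \inner{K(\cdot,v)z}{K(\cdot,u)y}_\calH$. On the other hand, $\inner{z}{K(v,u)y}_\calY = \inner{K(\cdot,u)y}{K(\cdot,v)z}_\calH$. Since $\calH$ is a real Hilbert space, the two right-hand sides coincide, so $\inner{y}{K(u,v)z}_\calY = \inner{K(v,u)y}{z}_\calY$ for every $y,z \in \calY$, which by definition of the adjoint yields $K(u,v)^* = K(v,u)$.

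For positive semidefiniteness, given $u_1,\dots,u_n \in \calU$ and $y_1,\dots,y_n \in \calY$, I would introduce the single element
\[
H := \sum_{j=1}^n K(\cdot,u_j)y_j \in \calH,
\]
which belongs to $\calH$ by the first clause of Definition~\ref{d:reproducingkernel}. Expanding $\norm{H}_\calH^2 = \inner{H}{H}_\calH$ by bilinearity and then applying the reproducing property to rewrite each cross term $\inner{K(\cdot,u_i)y_i}{K(\cdot,u_j)y_j}_\calH$ as $\inner{y_i}{K(u_i,u_j)y_j}_\calY$ gives
\[
\sum_{i=1}^n \sum_{j=1}^n \inner{y_i}{K(u_i,u_j)y_j}_\calY \;=\; \norm{H}_\calH^2 \geq 0,
\]
which is exactly \eqref{defpossemidef}.

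There is no real obstacle here: both properties are formal consequences of the reproducing property together with the symmetry and positivity of a real Hilbert space inner product. The only small care needed is to use the reproducing property in the correct direction (pulling the evaluation on the $\calY$ side into an inner product on the $\calH$ side) and to recognize that the ordering of arguments in $K(\cdot,u_j)y_j$ versus $K(u_i,u_j)$ is what produces the adjoint relation used implicitly in the symmetry argument.
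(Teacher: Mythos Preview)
Your proof is correct and follows exactly the approach the paper indicates: the paper does not spell out a proof for this proposition but simply notes that it ``follows directly from the reproducing property,'' and your argument does precisely that, mirroring the symmetry computation already appearing in \eqref{adjoint} within the proof of Theorem~\ref{t:contlinfunc}.
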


A far more profound result is the converse of Proposition~\ref{p:spsdkernel}: every symmetric positive semidefinite $K$ is the reproducing kernel of exactly one RKHS. This result, which is by no means trivial, was first proven in the case of scalar-valued kernels by Aronszajn \cite{Aronszajn1950}. He attributes the theorem ``essentially" to Moore, and the result is thus known as the Moore-Aronszajn theorem. For the operator-valued version, we refer to \cite{Micchelli2005}. We summarize the result as follows.

\begin{theorem}
A mapping $K:\calU\times\calU\to\calB(\calY)$ is the reproducing kernel for some reproducing kernel Hilbert space if and only if it is symmetric positive semidefinite. 

Moreover, if $K$ is symmetric positive semidefinite, then there exists a \emph{unique} reproducing kernel Hilbert space $\calH$ that admits $K$ as a reproducing kernel. 
\end{theorem}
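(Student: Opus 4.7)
The plan is to prove the nontrivial direction (symmetric positive semidefinite $\Rightarrow$ reproducing kernel of some unique RKHS) by the classical Moore-Aronszajn construction, adapted to the operator-valued setting, and then handle uniqueness via a density argument.

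First, I would construct a pre-Hilbert space $\calH_1$ consisting of all finite linear combinations of the form $H = \sum_{i=1}^{n} K(\cdot,u_i)\,y_i$ with $u_i \in \calU$, $y_i \in \calY$, $n \in \mathbb{N}$, and equip it with the candidate inner product
\begin{equation}
\label{H1}
\Big\langle \sum_i K(\cdot,u_i)y_i,\; \sum_j K(\cdot,v_j)z_j \Big\rangle_{\calH_1} := \sum_{i,j} \inner{y_i}{K(u_i,v_j)z_j}_{\calY}.
\end{equation}
I would check that \eqref{H1} is well-defined (independent of the chosen representation) by showing the right-hand side equals $\sum_j \inner{H(v_j)}{z_j}_\calY$ using symmetry $K(u_i,v_j)^*=K(v_j,u_i)$, which depends only on the operator $H$ and on $\{(v_j,z_j)\}$. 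Symmetry of the form is immediate from symmetry of $K$, and nonnegativity is exactly the positive semidefinite property of $K$. The reproducing identity $\inner{H}{K(\cdot,u)y}_{\calH_1} = \inner{H(u)}{y}_\calY$ then holds by construction on $\calH_1$, and via Cauchy--Schwartz it implies strict positive definiteness: if $\inner{H}{H}_{\calH_1}=0$, then $\inner{H(u)}{y}_\calY = 0$ for all $u,y$, so $H \equiv 0$.

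Next, I would form the abstract completion $\widetilde{\calH}$ of $\calH_1$ as a Hilbert space, and the key (and delicate) step is to realize $\widetilde{\calH}$ concretely as a space of operators $\calU \to \calY$. For any Cauchy sequence $\{H_n\}$ in $\calH_1$, Lemma~\ref{l:boundHu}\ref{l:boundHub} (which only uses the reproducing property already established on $\calH_1$) gives
\begin{equation*}
\norm{H_n(u) - H_m(u)}_\calY \leq \norm{H_n - H_m}_{\calH_1} \norm{K(u,u)}_{\calB(\calY)}^{\frac{1}{2}},
\end{equation*}
so $\{H_n(u)\}$ is Cauchy in $\calY$ for every $u\in\calU$. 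Define $H(u) := \lim_n H_n(u)$ and declare $\calH$ to be the set of all such pointwise limits, equipped with the norm inherited from $\widetilde{\calH}$. The main obstacle is showing that the assignment ``Cauchy sequence $\mapsto$ pointwise limit'' is well-defined on equivalence classes: I would show that if $\{H_n\}$ is Cauchy in $\calH_1$ with $H_n \to 0$ in $\widetilde{\calH}$, then $H_n(u) \to 0$ in $\calY$ for every $u$, using the same Lemma~\ref{l:boundHu}-type bound applied to $\inner{y}{H_n(u)}_\calY = \inner{H_n}{K(\cdot,u)y}_{\calH_1}$. This yields an isometric identification of $\widetilde\calH$ with a Hilbert space of operators, and the reproducing property extends to $\calH$ by taking limits in both sides of $\inner{H_n(u)}{y}_\calY = \inner{H_n}{K(\cdot,u)y}_{\calH_1}$.

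Finally, for uniqueness, suppose $\calH$ and $\calH'$ are two RKHSs admitting $K$ as reproducing kernel. Both contain $\calH_1$, and by the reproducing property the inner products of $\calH$ and $\calH'$ coincide on $\calH_1$ (they are both given by \eqref{H1}, since this expression is determined solely by $K$). I would then argue that $\calH_1$ is dense in any RKHS admitting $K$: if $H\in\calH$ is orthogonal to $\calH_1$, then $\inner{H(u)}{y}_\calY = \inner{H}{K(\cdot,u)y}_\calH = 0$ for all $u\in\calU$, $y\in\calY$, so $H=0$. Hence both $\calH$ and $\calH'$ arise as the unique Hilbert-space completion of the same pre-Hilbert space $\calH_1$, and pointwise realization (as above) identifies them as the same space of operators with the same inner product. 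This, together with Proposition~\ref{p:spsdkernel}, proves the theorem.
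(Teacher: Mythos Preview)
Your proposal is correct and follows exactly the classical Moore--Aronszajn construction that the paper itself sketches: the paper does not give a self-contained proof but outlines the same pre-Hilbert space $\calH_1$ of finite sums $\sum_i K(\cdot,u_i)y_i$ with the same inner product, describes $\calH$ as its completion realized via pointwise limits of Cauchy sequences, and refers to \cite{Aronszajn1950,Kalnishkan2009,Micchelli2005} for the details. Your write-up simply fills in those details (well-definedness of the inner product, strict positivity via Cauchy--Schwartz and the reproducing identity, pointwise realization of the completion using the bound from Lemma~\ref{l:boundHu}\ref{l:boundHub}, and uniqueness via density of $\calH_1$), so the approach is essentially identical to the one the paper points to.
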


The Moore-Aronszajn theorem is important because it provides a complete classification of reproducing kernel Hilbert spaces: every symmetric positive semidefinite $K$ defines a unique reproducing kernel Hilbert space and vice versa.

It also sheds light on the structure of the Hilbert space associated with a reproducing kernel. In fact, if $K$ is symmetric positive semidefinite, then 
\begin{equation}
\label{H1}
\calH_1 = \left\{ \sum_{i = 1}^n K(\cdot, u_i)y_i \mid n \in \mathbb{N}, u_i\in\calU, y_i\in\calY \right\}
\end{equation} 
can be shown to be an inner product space with inner product\footnote{Note that without loss of generality we can choose an equal number of $n$ terms in both functions (with the same $u_i\in\calU$) because some of the $y_i$'s or $z_i$'s can be selected to be zero.}
$$
\inner{\sum_{i = 1}^n K(\cdot, u_i)y_i}{\sum_{i = 1}^n K(\cdot, u_i)z_i\!\!}_{\calH_1} \!\!\!\!\!\!\!:=\! \sum_{i=1}^n\sum_{j=1}^n \inner{y_i}{K(u_i,u_j)z_j}_\calY
$$
and induced norm $\norm{\cdot}_{\calH_1} := \sqrt{\inner{\cdot}{\cdot}_{\calH_1}}$. Then $\calH$ is the completion of $\calH_1$ in the following sense: every $H\in\calH$ is the pointwise limit of a Cauchy sequence in $\calH_1$, i.e., $H(u) = \lim_{i \to \infty} H_i(u)$ for all $u\in\calU$, where $H_1,H_2,\dots$ is a Cauchy sequence in $\calH_1$. For a proof we refer to \cite{Aronszajn1950} and the accessible technical report \cite{Kalnishkan2009}. As pointed out in \cite{Micchelli2005}, these proofs are for scalar kernels but also extend to the operator-valued case considered here. 

\subsection{Feature maps}
\label{s:featuremaps}

Let $\calW$ be a Hilbert space and consider a mapping
$$
\phi : \calU \to \calB(\calY,\calW).
$$
We are interested in exploiting the mapping $\phi$ to define kernels $K : \calU \times \calU \to \calB(\calY)$ of the form
$$
K(u,v) = \phi(u)^* \phi(v).
$$
The mapping $\phi$ is often referred to as a \emph{feature map}. As also pointed out in \cite{Micchelli2005}, there is a close relationship between symmetric positive semidefinite kernels and their associated feature maps. 

\begin{theorem}
\label{t:featuremap}
Let $K : \calU \times \calU \to \calB(\calY)$. Then $K$ is symmetric positive semidefinite if and only if there exists a Hilbert space $\calW$ and a feature map $\phi: \calU \to \calB(\calY,\calW)$ such that for all $u,v \in \calU$:
\begin{equation}
\label{featurerep}
K(u,v) = \phi(u)^*\phi(v).
\end{equation}
\end{theorem}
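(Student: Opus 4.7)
The plan is to handle the two implications separately. The ``if'' direction is a direct computation, and the ``only if'' direction will use the canonical RKHS from the Moore--Aronszajn theorem as the feature space.

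For sufficiency, assume $K(u,v) = \phi(u)^*\phi(v)$ for some $\phi : \calU \to \calB(\calY,\calW)$. Symmetry is immediate since taking adjoints gives
\[
K(u,v)^* = (\phi(u)^*\phi(v))^* = \phi(v)^*\phi(u) = K(v,u).
\]
For positive semidefiniteness, given $u_1,\dots,u_n \in \calU$ and $y_1,\dots,y_n \in \calY$, I would rewrite the double sum via the defining property of the adjoint as
\[
\sum_{i,j=1}^n \inner{y_i}{K(u_i,u_j)y_j}_\calY = \sum_{i,j=1}^n \inner{\phi(u_i)y_i}{\phi(u_j)y_j}_\calW = \left\lVert \sum_{i=1}^n \phi(u_i)y_i \right\rVert_\calW^2 \geq 0,
\]
giving \eqref{defpossemidef}.

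For necessity, suppose $K$ is symmetric positive semidefinite. By the Moore--Aronszajn theorem (stated in Section~\ref{s:Moore-Aronszajn}), there exists a unique RKHS $\calH$ admitting $K$ as its reproducing kernel. The natural choice is to take $\calW := \calH$ and the feature map $\phi(u) := K_u$, where $K_u : y \mapsto K(\cdot,u)y$ is the linear operator introduced in Lemma~\ref{l:boundHu}. That lemma already guarantees $K_u \in \calB(\calY,\calH)$, so $\phi$ indeed maps into $\calB(\calY,\calW)$, and the adjoint $K_u^* \in \calB(\calH,\calY)$ is well-defined.

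It remains to verify that $\phi(u)^*\phi(v) = K(u,v)$ for all $u,v \in \calU$. Fix $y,z \in \calY$. Using the adjoint relation together with the reproducing property \eqref{repprop} applied to $H = K_u z \in \calH$, I obtain
\[
\inner{z}{K_u^* K_v y}_\calY = \inner{K_u z}{K_v y}_\calH = \inner{y}{(K_u z)(v)}_\calY = \inner{y}{K(v,u)z}_\calY.
\]
Symmetry of $K$ (Proposition~\ref{p:spsdkernel}) yields $K(v,u) = K(u,v)^*$, so $\inner{y}{K(v,u)z}_\calY = \inner{z}{K(u,v)y}_\calY$. Since $y,z$ are arbitrary, this gives $K_u^* K_v = K(u,v)$, completing the proof.

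The only subtle step is the bookkeeping with the adjoint in this last computation; everything else reduces to the reproducing property and Moore--Aronszajn, which do the heavy lifting.
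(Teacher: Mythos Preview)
Your proof is correct and follows essentially the same route as the paper: both directions are handled identically, with the necessity argument taking $\calW = \calH$ and $\phi(u) = K_u$. The only cosmetic difference is that the paper first identifies $\phi(v)^*$ explicitly as the evaluation operator $H \mapsto H(v)$ and then reads off $\phi(u)^*\phi(v)y = K(u,v)y$, whereas you verify $K_u^*K_v = K(u,v)$ by pairing against arbitrary $y,z$; also, your appeal to Proposition~\ref{p:spsdkernel} for symmetry is unnecessary since symmetry of $K$ is already part of the hypothesis.
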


\begin{proof}
We first prove sufficiency. Thus, we assume that a feature map exists and want to prove that $K$ in \eqref{featurerep} is symmetric positive semidefinite. Note that $K(u,v)^* = (\phi(u)^* \phi(v))^* = \phi(v)^*\phi(u) = K(v,u)$, thus $K$ is symmetric. In addition, 
$$
\inner{y}{K(u,v)z}_\calY = \inner{\phi(u)y}{\phi(v) z}_\calW.
$$
Hence for all $n \in \mathbb{N}$ and $(u_i,y_i) \in \calU \times \calY$ ($i = 1,2,\dots,n$) we have 
\begin{align*}
\sum_{i=1}^n\sum_{j=1}^n \inner{y_i}{K(u_i,u_j)y_j}_\calY &= \sum_{i=1}^n\sum_{j=1}^n \inner{\phi(u_i)y_i}{\phi(u_j) y_j}_\calW \\
&= \inner{\sum_{i=1}^n \phi(u_i)y_i}{\sum_{j=1}^n \phi(u_j)y_j}_\calW,
\end{align*}
and thus $K$ is positive semidefinite. 

To prove necessity, suppose that $K$ is symmetric positive semidefinite. Define the Hilbert space $\calW := \calH$ and the mapping $\phi : \calU \to \calB(\calY,\calH)$ as $\phi : v \mapsto K(\cdot,v)$. By the reproducing property \eqref{repprop} we have that
$$
\inner{H}{\phi(v)y}_\calH = \inner{H}{K(\cdot,v)y}_\calH = \inner{H(v)}{y}_\calY.
$$
This implies that the adjoint of $\phi(v)$ is just the evaluation operator, that is, $\phi(v)^* \in \calB(\calH,\calY)$ is given by $\phi(v)^* H = H(v)$. This means that $\phi(u)^* \phi(v)y = \phi(u)^* K(\cdot,v)y = K(u,v)y$ and therefore we have shown that $K(u,v) = \phi(u)^* \phi(v)$. We have thus shown the existence of a Hilbert space $\calW$ and mapping $\phi$ such that \eqref{featurerep} holds. This proves the theorem.
\end{proof}

\subsection{Regularized least squares}
\label{s:regLS}

An attractive feature of RKHSs is that several function estimation problems have an elegant and tractable solution if the underlying space has a reproducing kernel. In this section we focus on the regularized least squares problem
\begin{equation}
\label{regLS}
\min_{H \in \calH} \left( \sum_{i=1}^n \norm{y_i - H(u_i)}^2_\calY + \gamma \norm{H}_\calH^2 \right),
\end{equation}
where $y_i \in \calY$, $u_i \in \calU$ for $i = 1,2,\dots,n$ and $\gamma > 0$ is a scalar. It turns out that the solution to \eqref{regLS} is unique for any RKHS, see \cite[Thm. 4.1]{Micchelli2005}. 

\begin{theorem}[RegLS representer theorem]
\label{t:regLS}
Suppose that $\calH$ is a reproducing kernel Hilbert space of operators from $\calU$ to $\calY$ and let $K : \calU \times \calU \to \calB(\calY)$ be its reproducing kernel. There exists a unique solution $\hat{H}$ to \eqref{regLS}, which is given by 
\begin{equation}
\label{estimateregLS}
\hat{H}(\cdot) = \sum_{j=1}^n K(\cdot,u_j) c_j,
\end{equation}
where the coefficients $c_j \in \calY$ ($j = 1,2,\dots,n$) are the unique solution to the system of linear equations
\begin{equation}
\label{lineqregLS}
\sum_{j=1}^n (K(u_i,u_j) + \gamma \delta_{i,j}) c_j = y_i, \:\:i = 1,2,\dots,n,
\end{equation}
with $\delta_{i,j} \in \calB(\calY)$ equal to the identity operator if $i = j$ and the zero operator if $i\neq j$. 
\end{theorem}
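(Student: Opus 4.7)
The plan is to recast \eqref{regLS} as a standard regularized least squares problem on the Hilbert space $\calH$ by introducing the joint evaluation operator at the data inputs, and then to derive the representer formula \eqref{estimateregLS} via the operator push-through identity. The only non-trivial input is the reproducing property, which lets the adjoint of evaluation be read off in closed form and makes the Gram operator appear as $EE^*$.

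\textbf{Main steps.} First I would define $E: \calH \to \calY^n$ by $EH := (H(u_1), \ldots, H(u_n))$, which is bounded by Lemma~\ref{l:boundHu}\ref{l:boundHub}. For $\bar{y} = (y_1,\ldots,y_n) \in \calY^n$, the reproducing property \eqref{repprop} gives
\begin{equation*}
\inner{\bar{y}}{EH}_{\calY^n} = \sum_{i=1}^n \inner{y_i}{H(u_i)}_\calY = \inner{H}{\sum_{i=1}^n K(\cdot,u_i)y_i}_\calH,
\end{equation*}
so $E^*\bar{y} = \sum_i K(\cdot,u_i)y_i$, and in particular $EE^* = G$. The objective equals $J(H) = \norm{\bar{y} - EH}_{\calY^n}^2 + \gamma \norm{H}_\calH^2$, which, since $\gamma > 0$, is a strictly convex, coercive, Fr\'echet-differentiable quadratic on the Hilbert space $\calH$. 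It therefore admits a unique minimizer $\hat{H}$, characterized by the vanishing of its derivative, i.e., the normal equation $(E^*E + \gamma I)\hat{H} = E^*\bar{y}$. Because $E^*E + \gamma I$ is strictly positive and hence invertible, $\hat{H} = (E^*E + \gamma I)^{-1} E^*\bar{y}$.

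\textbf{Finishing.} Applying the push-through identity $(E^*E + \gamma I)^{-1}E^* = E^*(EE^* + \gamma I)^{-1}$, which is immediate by left-multiplying by $E^*E + \gamma I$ and using $EE^* = G$, I obtain $\hat{H} = E^*(G + \gamma I)^{-1}\bar{y}$. Setting $\bar{c} := (G + \gamma I)^{-1}\bar{y}$ yields $\hat{H} = E^*\bar{c} = \sum_j K(\cdot,u_j)c_j$, which is \eqref{estimateregLS}, while the defining identity $(G + \gamma I)\bar{c} = \bar{y}$ unpacks componentwise to \eqref{lineqregLS}. Uniqueness of $\bar{c}$ follows because $G$ is a positive operator by Lemma~\ref{l:spsd}\ref{l:psd}, so $G + \gamma I$ is invertible. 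The main point of care is the identification of the adjoint $E^*$ through the reproducing property: once this is in hand, the remainder is standard Hilbert-space least squares algebra and carries over verbatim to the operator-valued setting.
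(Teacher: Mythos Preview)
Your argument is correct. The paper does not actually supply a proof of this theorem: it simply cites \cite[Thm.~4.1]{Micchelli2005} for existence, uniqueness, and the representer formula. Your write-up therefore goes beyond what the paper provides, and each step checks out. The identification of the adjoint $E^*\bar{y}=\sum_i K(\cdot,u_i)y_i$ via the reproducing property is exactly right, the factorization $EE^*=G$ matches the paper's definition of the Gram operator, and the push-through identity together with positivity of $G$ (Lemma~\ref{l:spsd}\ref{l:psd}) and self-adjointness (Lemma~\ref{l:spsd}\ref{l:s}) gives invertibility of $G+\gamma I$ and hence uniqueness of $\bar{c}$.

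One small point worth making explicit: to conclude that $G+\gamma I$ is invertible on the (possibly infinite-dimensional) Hilbert space $\calY^n$, you are implicitly using that a bounded self-adjoint operator $A$ with $A\geq \gamma I$ has a bounded inverse; this is standard but could be stated. Likewise, strict convexity and coercivity of $J$ on a Hilbert space guarantee a unique minimizer via the direct method or simply by completing the square in the quadratic, so the Fr\'echet-differentiability remark is not essential. These are cosmetic; the substance of your proof is complete and is a clean, self-contained alternative to the external reference the paper relies on.
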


We remark that the system of linear equations  \eqref{lineqregLS} can be written compactly using the Gram operator. Indeed, \eqref{lineqregLS} is equivalent to 
$$
(G+\gamma I)(c_1,\dots,c_n) = (y_1,\dots,y_n),
$$
where $G: \calY^n \to \calY^n$ is the Gram operator associated with $K$ and $u_1,u_2,\dots,u_n$.

\subsection{How to construct kernels?} 

We will conclude this section on background material by giving some concrete guidelines on how to construct symmetric positive semidefinite kernels. 
First off, as noted in \cite[Thm. 3]{Kadri2016}, it is possible to construct new kernels, given two symmetric positive semidefinite ones.  
\begin{proposition}
\label{p:combiningkernels}
Let $L,M : \calU \times \calU \to \calB(\calY)$ be symmetric positive semidefinite. 
\begin{itemize}
\item $K:= L+M$ is a symmetric positive semidefinite kernel. 
\item If $L(u,v)M(u,v) = M(u,v)L(u,v)$ $\forall u,v \in \calU$ then $K:=LM$ is symmetric positive semidefinite. 
\item $K := RLR^*$ is a symmetric positive semidefinite kernel for any $R \in \calB(\calY)$.
\end{itemize}
\end{proposition}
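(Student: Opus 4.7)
The plan is to treat the three bullets separately, handling the sum and the operator-congruence claims by direct verification of the two defining properties (symmetry and the positivity inequality \eqref{defpossemidef}), and reserving the main effort for the product claim, which is the operator-valued analog of the Schur product theorem and is the only nontrivial item.

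For the sum $K := L+M$, symmetry is immediate from $K(u,v)^* = L(u,v)^* + M(u,v)^* = L(v,u) + M(v,u) = K(v,u)$, and positive semidefiniteness follows because the quadratic form in \eqref{defpossemidef} splits as a sum of two nonnegative terms coming from $L$ and $M$ separately. For the congruence $K := RLR^*$, symmetry is $(RL(u,v)R^*)^* = RL(u,v)^*R^* = RL(v,u)R^*$, and positive semidefiniteness follows by the substitution $z_i := R^* y_i$, which turns $\sum_{i,j}\inner{y_i}{RL(u_i,u_j)R^* y_j}_\calY$ into $\sum_{i,j}\inner{z_i}{L(u_i,u_j)z_j}_\calY \geq 0$. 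Both arguments are short and parallel each other.

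The product claim $K := LM$ under pointwise commutativity is more delicate. Symmetry uses the commutativity hypothesis directly: $(L(u,v)M(u,v))^* = M(u,v)^* L(u,v)^* = M(v,u)L(v,u) = L(v,u)M(v,u)$, where the last equality is the commutativity assumption applied at the argument $(v,u)$. For positive semidefiniteness, my plan is to invoke the feature-map factorization of Theorem~\ref{t:featuremap} to write $L(u,v) = \phi_L(u)^*\phi_L(v)$ and $M(u,v)=\phi_M(u)^*\phi_M(v)$ for some $\phi_L : \calU \to \calB(\calY,\calW_L)$ and $\phi_M : \calU \to \calB(\calY,\calW_M)$. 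The auxiliary tensor-product kernel $\tilde K(u,v) := L(u,v)\otimes M(u,v)$, valued in $\calB(\calY\otimes\calY)$, is manifestly symmetric positive semidefinite via the feature map $u\mapsto \phi_L(u)\otimes \phi_M(u)$. The remaining step is to exploit the pointwise commutativity of $L(u,v)$ and $M(u,v)$ to realize $LM$ as a compression of $\tilde K$ to a suitable "diagonal" subspace of $\calY\otimes\calY$ arising from a joint spectral decomposition of the commuting self-adjoint operators, which transfers positivity of $\tilde K$ to positivity of $K$ by the same substitution trick used in the congruence case.

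The hard part is precisely this compression step: one must verify that the commutativity assumption provides enough structure to obtain $LM = V^*\tilde K V$ for a bounded $V$ that is independent of $(u,v)$, so that positivity passes through uniformly. Should the tensor-product compression prove technically awkward, I would fall back on an approximation route: approximate $L$ and $M$ by finite-rank operator-valued kernels (obtained from truncating feature expansions), reduce to the finite-dimensional Schur product theorem applied to the commuting Gram operators associated with $L$ and $M$ on $\calY^n$ via Lemma~\ref{l:spsd}\ref{l:psd}, and then pass to the limit using continuity of the inner product on $\calY$.
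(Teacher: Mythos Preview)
The paper does not prove this proposition; it merely records it with a citation to \cite[Thm.~3]{Kadri2016}. So there is no argument in the paper to compare against, and your proposal must stand on its own. Your treatment of $K=L+M$ and of $K=RLR^*$ is correct and complete: symmetry and the inequality \eqref{defpossemidef} follow by exactly the substitutions you describe.

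The genuine gap is in the product item. Two concrete problems. First, your phrase ``joint spectral decomposition of the commuting self-adjoint operators'' does not apply: for $u\neq v$ the operators $L(u,v)$ and $M(u,v)$ are \emph{not} self-adjoint (symmetry of the kernel gives $L(u,v)^*=L(v,u)$, not $L(u,v)^*=L(u,v)$), so there is no spectral theorem to invoke, and even for commuting normal operators the joint eigenbasis would depend on $(u,v)$. That kills any hope of a single bounded $V:\calY\to\calY\otimes\calY$, independent of $(u,v)$, satisfying $L(u,v)M(u,v)=V^*\bigl(L(u,v)\otimes M(u,v)\bigr)V$; without a fixed $V$ the congruence trick does not transfer positivity from $\tilde K$ to $K$. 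Second, your fallback is circular. Pointwise commutativity $L(u,v)M(u,v)=M(u,v)L(u,v)$ does \emph{not} make the Gram operators $G_L$ and $G_M$ on $\calY^n$ commute (the $(i,k)$ block of $G_LG_M$ is $\sum_j L(u_i,u_j)M(u_j,u_k)$, which involves different argument pairs). And the Gram operator of $K=LM$ is neither $G_LG_M$ nor the entrywise Hadamard product of $G_L$ and $G_M$; it is the \emph{block}-Hadamard product, whose $(i,j)$ block is the operator product $L(u_i,u_j)M(u_i,u_j)$. Positivity of a block-Hadamard product of positive block operators is precisely the nontrivial claim you are trying to establish, so invoking ``the finite-dimensional Schur product theorem applied to the commuting Gram operators'' simply restates the problem rather than solving it.
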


In addition, we can construct operator-valued kernels from scalar-valued ones \cite{Micchelli2004,Micchelli2005}.

\begin{proposition}
\label{p:separablekernels}
Let $k : \calU \times \calU \to \mathbb{R}$ be a symmetric positive semidefinite kernel and consider a self-adjoint positive operator $R \in \calB(\calY)$. Then $K: \calU \times \calU \to \calB(\calY)$, defined by $K(u,v) := k(u,v) R$, is symmetric positive semidefinite.
\end{proposition}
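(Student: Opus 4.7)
The plan is to verify the two defining properties of a symmetric positive semidefinite kernel separately, exploiting the factorization $R = R^{\frac{1}{2}} R^{\frac{1}{2}}$ guaranteed by the preliminaries (since $R$ is self-adjoint and positive in $\calB(\calY)$).

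First I would dispatch symmetry by direct computation: for any $u,v \in \calU$,
\[
K(u,v)^* = (k(u,v) R)^* = k(u,v) R^* = k(v,u) R = K(v,u),
\]
using scalar symmetry of $k$ and self-adjointness of $R$.

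For positive semidefiniteness, the goal is to show that for any $n \in \mathbb{N}$, $u_1,\dots,u_n \in \calU$ and $y_1,\dots,y_n \in \calY$, the quantity $Q := \sum_{i,j=1}^n \inner{y_i}{K(u_i,u_j) y_j}_\calY$ is nonnegative. Using self-adjointness of $R^{\frac{1}{2}}$, I would rewrite
\[
Q = \sum_{i,j=1}^n k(u_i,u_j) \inner{R^{\frac{1}{2}} y_i}{R^{\frac{1}{2}} y_j}_\calY .
\]
The main obstacle here is that the inner products $\inner{R^{\frac{1}{2}} y_i}{R^{\frac{1}{2}} y_j}_\calY$ do not obviously factor as a product of scalars $\alpha_i \alpha_j$, so scalar positive semidefiniteness of $k$ cannot be applied in one shot. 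To overcome this, I would use that the vectors $R^{\frac{1}{2}} y_1,\dots, R^{\frac{1}{2}} y_n$ span a finite-dimensional subspace of $\calY$; choosing an orthonormal basis $e_1,\dots,e_r$ of this subspace and writing $R^{\frac{1}{2}} y_i = \sum_{\alpha=1}^r c_{i,\alpha} e_\alpha$ yields $\inner{R^{\frac{1}{2}} y_i}{R^{\frac{1}{2}} y_j}_\calY = \sum_{\alpha=1}^r c_{i,\alpha} c_{j,\alpha}$.

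Substituting into $Q$ and swapping the order of summation gives
\[
Q = \sum_{\alpha=1}^r \sum_{i,j=1}^n k(u_i,u_j) c_{i,\alpha} c_{j,\alpha},
\]
and each inner double sum is nonnegative because $k$ is a scalar-valued symmetric positive semidefinite kernel evaluated against the scalars $\{c_{i,\alpha}\}_{i=1}^n$ (apply the Gram matrix characterization \eqref{Gram}). Hence $Q \geq 0$, establishing positive semidefiniteness of $K$ and completing the proof. As an alternative I would mention in passing, one could also invoke Theorem~\ref{t:featuremap}: if $\phi_0 : \calU \to \calB(\mathbb{R},\calW_0)$ is a feature map for $k$, then $\Phi(u) y := \phi_0(u) \otimes R^{\frac{1}{2}} y$ provides a feature map for $K$ into $\calW_0 \otimes \calY$, yielding $K(u,v) = \Phi(u)^* \Phi(v)$ and hence symmetry and positive semidefiniteness simultaneously.
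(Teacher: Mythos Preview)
Your argument is correct. Note, however, that the paper does not actually supply its own proof of this proposition: it is stated with a citation to \cite{Micchelli2004,Micchelli2005} and then used as a black box (for instance inside the proof of Proposition~\ref{p:nonexpansivekernels}). So there is nothing in the paper to compare your route against.

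On the substance: the symmetry check is immediate, and your reduction to the scalar case via an orthonormal basis of the finite-dimensional span of the $R^{\frac{1}{2}} y_i$ is clean and fully rigorous. A slightly slicker packaging of the same idea, which you may wish to record, is to observe that with $A_{ij} := k(u_i,u_j)$ and $B_{ij} := \inner{R^{\frac{1}{2}} y_i}{R^{\frac{1}{2}} y_j}_\calY$ both $A$ and $B$ are real symmetric positive semidefinite $n\times n$ matrices, whence $Q = \sum_{i,j} A_{ij} B_{ij} = \operatorname{tr}(AB) = \operatorname{tr}(A^{\frac{1}{2}} B A^{\frac{1}{2}}) \geq 0$; this is exactly your orthonormal-basis computation rephrased. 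The feature-map alternative you sketch at the end is also valid, but introducing tensor products of Hilbert spaces is more machinery than the statement warrants.
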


Kernels of the form described in Proposition~\ref{p:separablekernels} are often called \emph{separable} \cite{Alvarez2012,Kadri2016}. One of their advantages is fast computation of the solution to linear equations of the form \eqref{lineqregLS}. This is because the Gram operator of a separable kernel can be decomposed as a tensor product. Indeed, if we define the tensor product $A \otimes B : \calX^n \to \calY^m$ of the linear operators
\begin{align*}
&A: \!\mathbb{R}^n \!\to\! \mathbb{R}^m, A: (x_1,\dots,x_n) \mapsto (\sum_{j = 1}^n a_{1j}x_j,\dots,\sum_{j = 1}^n a_{mj}x_j),
\end{align*}
and $B: \calX \to \calY$ ($\calX$ and $\calY$ are Hilbert spaces) as
$$
A \otimes B : (z_1,\dots,z_n) \mapsto (\sum_{j = 1}^n a_{1j} B z_j,\dots,\sum_{j = 1}^n a_{mj} B z_j),
$$
then the Gram operator can be decomposed as
$$
G = \begin{bmatrix}
k(u_1,u_1) & \cdots & k(u_1,u_n) \\  
\vdots & \ddots & \vdots \\
k(u_n,u_1) & \cdots & k(u_n,u_n)
\end{bmatrix} \otimes R.
$$

In the next proposition we discuss some concrete examples of scalar-valued kernels. For this, we need the definition of \emph{complete monotonicity}.

\begin{definition}
A function $f:[0,\infty) \to \mathbb{R}$ is called \emph{completely monotone} if it is continuous on $[0,\infty)$, infinitely differentiable on $(0,\infty)$ and satisfies
$$
(-1)^n \frac{d^n}{dx^n} f(x) \geq 0
$$
for all $n \in \mathbb{N}$ and $x \in (0,\infty)$. 
\end{definition}

\begin{proposition}
\label{p:scalarkernels}
The following scalar-valued $k: \calU \times \calU \to \mathbb{R}$ are symmetric positive semidefinite.
\begin{enumerate}[label=(\alph*)]
\item The \emph{polynomial kernel}: 
$$
k(u,v) = (c + \inner{u}{v}_\calU)^d,
$$
with $c \geq 0$ and $d \in \mathbb{N}$.
\item As a special case, the \emph{bilinear kernel}: 
\begin{equation}
\label{bilinearkernel}
k(u,v) = \inner{u}{v}_\calU.
\end{equation} 
\item \label{itemradial} The \emph{radial basis function kernel}:
\begin{equation}
\label{radialkernel}
k(u,v) = f(\norm{u-v}_\calU^2),
\end{equation} 
where $f: [0,\infty) \to \mathbb{R}$ is completely monotone. 
\item In particular, the \emph{Gaussian kernel}: 
\begin{equation} 
\label{Gaussiankernel}
k(u,v) = e^{-\frac{\norm{u-v}^2_\calU}{\sigma^2}},
\end{equation} 
where $\sigma > 0$. 
\item And the \emph{Laplacian kernel}: 
\begin{equation}
\label{Laplaciankernel}
k(u,v) = e^{-\frac{\norm{u-v}_\calU}{\sigma}},
\end{equation} 
where $\sigma > 0$. 
\end{enumerate}
\end{proposition}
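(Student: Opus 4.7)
The plan is to build these kernels up from the bilinear kernel as the base case, invoke the closure properties of Proposition~\ref{p:combiningkernels} (applied to scalar-valued kernels, viewed as operator-valued on $\calY = \mathbb{R}$), and handle the radial cases via a Laplace-transform / Bernstein representation.

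First I would dispatch the bilinear kernel (b) directly: for any $u_1,\dots,u_n \in \calU$ and $y_1,\dots,y_n \in \mathbb{R}$, the Gram sum $\sum_{i,j} y_i y_j \inner{u_i}{u_j}_\calU$ equals $\norm{\sum_i y_i u_i}_\calU^2 \geq 0$, which also shows symmetry. The polynomial kernel (a) then follows because the constant kernel $(u,v)\mapsto c$ is psd (its Gram matrix is $c\,\mathbf{1}\mathbf{1}^\top$), sums of scalar psd kernels are psd, and products of scalar psd kernels are psd (the commutativity hypothesis in Proposition~\ref{p:combiningkernels} is automatic in the scalar case), so $(c+\inner{u}{v}_\calU)^d$ is psd by induction on $d$.

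Next I would attack the Gaussian kernel (d), which also underpins (c) and (e). Writing
\[
e^{-\norm{u-v}_\calU^2/\sigma^2} = e^{-\norm{u}_\calU^2/\sigma^2}\,e^{-\norm{v}_\calU^2/\sigma^2}\,e^{2\inner{u}{v}_\calU/\sigma^2},
\]
the exponential factor $e^{2\inner{u}{v}_\calU/\sigma^2}$ is a uniformly convergent series of nonnegative multiples of the polynomial kernels $\inner{u}{v}_\calU^k$, hence psd by (a) and closure under sums (and pointwise limits, which is a simple $n\times n$ Gram-matrix argument). Multiplication by the ``outer product'' $\phi(u)\phi(v)$ with $\phi(w)=e^{-\norm{w}_\calU^2/\sigma^2}$ preserves positive semidefiniteness, since the Gram matrix gets conjugated by $\operatorname{diag}(\phi(u_i))$.

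For the radial basis family (c) I would invoke Bernstein's theorem, which represents any completely monotone $f$ as a Laplace transform $f(x)=\int_0^\infty e^{-tx}\,d\mu(t)$ for some positive Borel measure $\mu$ on $[0,\infty)$. Then
\[
k(u,v) = \int_0^\infty e^{-t\norm{u-v}_\calU^2}\,d\mu(t),
\]
and for each fixed $t\geq 0$ the integrand is a Gaussian-type kernel (taking $t=1/\sigma^2$), which is psd by (d); the Gram sum is then an integral of nonnegative quantities against $\mu$, hence nonnegative. The Laplacian kernel (e) follows as the special case $f(x)=e^{-\sqrt{x}/\sigma}$, which is completely monotone because its derivatives alternate in sign (an elementary induction, or it can be read off from the standard identity $e^{-\sqrt{x}}=\int_0^\infty e^{-xt}\,\frac{1}{2\sqrt{\pi}}t^{-3/2}e^{-1/(4t)}\,dt$).

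The main obstacle is the invocation of Bernstein's theorem in (c): it is a genuine external fact rather than a manipulation of the definitions, and one must be careful about measurability and Fubini when swapping the sum $\sum_{i,j}y_iy_j$ with $\int_0^\infty\cdot\,d\mu(t)$. Everything else is routine manipulation of psd kernels and scalar series.
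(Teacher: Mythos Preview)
Your proposal is correct. The paper does not actually prove this proposition: it only cites \cite{Scholkopf2001} for the polynomial kernel, cites Schoenberg \cite{Schoenberg1938} for the radial basis result~\ref{itemradial}, and then observes that the Gaussian and Laplacian kernels follow from Schoenberg's theorem because $e^{-x/\sigma^2}$ and $e^{-\sqrt{x}/\sigma}$ are completely monotone.

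Your route is more explicit and, interestingly, reverses the logical order of (c) and (d): you establish the Gaussian kernel directly via the factorization $e^{-\norm{u-v}^2/\sigma^2} = \phi(u)\phi(v)\,e^{2\inner{u}{v}/\sigma^2}$ and a power-series argument, and then obtain the general radial family from it via Bernstein's Laplace-transform representation. The paper instead invokes Schoenberg's theorem for (c) as a black box and gets (d) and (e) as corollaries. The two are essentially equivalent in depth, since Schoenberg's proof itself rests on Bernstein's theorem; what you gain is a self-contained argument for the Gaussian case that does not require any measure-theoretic machinery, at the cost of still needing Bernstein for the full radial class. Your caveat about Fubini in the Bernstein step is well placed but harmless here, since the Gram sum is finite.
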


The fact that polynomial kernels are symmetric positive semidefinite is well-known, and can be found e.g. in the textbook \cite[Sec. 2.3]{Scholkopf2001}. The result \ref{itemradial} is due to Schoenberg \cite[Thm. 3]{Schoenberg1938}. Note that the positive semidefiniteness of Gaussian and Laplacian kernels follows from Schoenberg's result, noting that $e^{-\frac{x}{\sigma^2}}$ and $e^{-\frac{\sqrt{x}}{\sigma}}$ are completely monotone functions in $x$.

\bibliographystyle{IEEEtran}
\bibliography{references}

% Generated by IEEEtran.bst, version: 1.14 (2015/08/26)
\begin{thebibliography}{10}
\providecommand{\url}[1]{#1}
\csname url@samestyle\endcsname
\providecommand{\newblock}{\relax}
\providecommand{\bibinfo}[2]{#2}
\providecommand{\BIBentrySTDinterwordspacing}{\spaceskip=0pt\relax}
\providecommand{\BIBentryALTinterwordstretchfactor}{4}
\providecommand{\BIBentryALTinterwordspacing}{\spaceskip=\fontdimen2\font plus
\BIBentryALTinterwordstretchfactor\fontdimen3\font minus
  \fontdimen4\font\relax}
\providecommand{\BIBforeignlanguage}[2]{{%
\expandafter\ifx\csname l@#1\endcsname\relax
\typeout{** WARNING: IEEEtran.bst: No hyphenation pattern has been}%
\typeout{** loaded for the language `#1'. Using the pattern for}%
\typeout{** the default language instead.}%
\else
\language=\csname l@#1\endcsname
\fi
#2}}
\providecommand{\BIBdecl}{\relax}
\BIBdecl

\bibitem{Erichson2021}
N.~B. Erichson, O.~Azencot, A.~Queiruga, L.~Hodgkinson, and M.~W. Mahoney,
  ``Lipschitz recurrent neural networks,'' in \emph{International Conference on
  Learning Representations}, 2021.

\bibitem{Eykholt2018}
K.~Eykholt, I.~Evtimov, E.~Fernandes, B.~Li, A.~Rahmati, C.~Xiao, A.~Prakash,
  T.~Kohno, and D.~Song, ``Robust physical-world attacks on deep learning
  visual classification,'' in \emph{Proceedings of the IEEE/CVF Conference on
  Computer Vision and Pattern Recognition}, 2018, pp. 1625--1634.

\bibitem{Scholkopf2001}
B.~Sch\"olkopf and A.~J. Smola, \emph{Learning with Kernels: Support Vector
  Machines, Regularization, Optimization, and Beyond}.\hskip 1em plus 0.5em
  minus 0.4em\relax Cambridge, MA, USA: MIT Press, 2001.

\bibitem{Sandberg1964}
I.~W. Sandberg, ``On the {L2}-boundedness of solutions of nonlinear functional
  equations,'' \emph{The Bell System Technical Journal}, vol.~43, no.~4, pp.
  1581--1599, 1964.

\bibitem{Zames1966}
G.~Zames, ``On the input-output stability of time-varying nonlinear feedback
  systems part {I}: Conditions derived using concepts of loop gain, conicity,
  and positivity,'' \emph{IEEE Transactions on Automatic Control}, vol.~11,
  no.~2, pp. 228--238, 1966.

\bibitem{Zames1966b}
------, ``On the input-output stability of time-varying nonlinear feedback
  systems part {II}: Conditions involving circles in the frequency plane and
  sector nonlinearities,'' \emph{IEEE Transactions on Automatic Control},
  vol.~11, no.~3, pp. 465--476, 1966.

\bibitem{Willems1969}
J.~C. Willems, ``Stability, instability, invertibility and causality,''
  \emph{SIAM Journal on Control}, vol.~7, no.~4, pp. 645--671, 1969.

\bibitem{Desoer1975}
C.~A. Desoer and M.~Vidyasagar, \emph{Feedback Systems: Input-output
  Properties}, ser. Electrical science series.\hskip 1em plus 0.5em minus
  0.4em\relax Academic Press, 1975.

\bibitem{Bauschke2011}
H.~H. Bauschke and P.~L. Combettes, \emph{Convex Analysis and Monotone Operator
  Theory in Hilbert Spaces}, 1st~ed.\hskip 1em plus 0.5em minus 0.4em\relax
  Springer Publishing Company, Incorporated, 2011.

\bibitem{Ryu2022}
E.~K. Ryu and W.~Yin, \emph{Large-Scale Convex Optimization via Monotone
  Operators}.\hskip 1em plus 0.5em minus 0.4em\relax Cambridge University Press
  (to be published), 2022.

\bibitem{Megretski1997}
A.~Megretski and A.~Rantzer, ``System analysis via integral quadratic
  constraints,'' \emph{IEEE Transactions on Automatic Control}, vol.~42, no.~6,
  pp. 819--830, 1997.

\bibitem{Kulkarni2002}
V.~V. Kulkarni and M.~G. Safonov, ``Incremental positivity nonpreservation by
  stability multipliers,'' \emph{IEEE Transactions on Automatic Control},
  vol.~47, no.~1, pp. 173--177, 2002.

\bibitem{Aronszajn1950}
N.~Aronszajn, ``Theory of reproducing kernels,'' \emph{Transactions of the
  American Mathematical Society}, vol.~68, no.~3, pp. 337--404, 1950.

\bibitem{Micchelli2005}
C.~A. Micchelli and M.~Pontil, ``On learning vector-valued functions,''
  \emph{Neural Computation}, vol.~17, no.~1, pp. 177--204, 2005.

\bibitem{Pillonetto2010}
G.~Pillonetto and G.~{De Nicolao}, ``A new kernel-based approach for linear
  system identification,'' \emph{Automatica}, vol.~46, no.~1, pp. 81--93, 2010.

\bibitem{Pillonetto2014}
G.~Pillonetto, F.~Dinuzzo, T.~Chen, G.~{De Nicolao}, and L.~Ljung, ``Kernel
  methods in system identification, machine learning and function estimation: A
  survey,'' \emph{Automatica}, vol.~50, no.~3, pp. 657--682, 2014.

\bibitem{Pillonetto2011}
G.~Pillonetto, M.~Quang, and A.~Chiuso, ``A new kernel-based approach for
  nonlinear system identification,'' \emph{IEEE Transactions on Automatic
  Control}, vol.~56, no.~12, pp. 2825--2840, 2011.

\bibitem{Dinuzzo2015}
F.~Dinuzzo, ``Kernels for linear time invariant system identification,''
  \emph{SIAM Journal on Control and Optimization}, vol.~53, no.~5, pp.
  3299--3317, 2015.

\bibitem{Bottegal2016}
G.~Bottegal, A.~Y. Aravkin, H.~Hjalmarsson, and G.~Pillonetto, ``Robust em
  kernel-based methods for linear system identification,'' \emph{Automatica},
  vol.~67, pp. 114--126, 2016.

\bibitem{Bouvrie2017}
J.~Bouvrie and B.~Hamzi, ``Kernel methods for the approximation of nonlinear
  systems,'' \emph{SIAM Journal on Control and Optimization}, vol.~55, no.~4,
  pp. 2460--2492, 2017.

\bibitem{Bouvrie2017b}
------, ``Kernel methods for the approximation of some key quantities of
  nonlinear systems,'' \emph{Journal of Computational Dynamics}, vol.~4, no.
  1\&2, pp. 1--19, 2017.

\bibitem{Ramaswamy2018}
K.~R. Ramaswamy, G.~Bottegal, and P.~M.~J. {Van den Hof}, ``Local module
  identification in dynamic networks using regularized kernel-based methods,''
  in \emph{Proceedings of the IEEE Conference on Decision and Control}, 2018,
  pp. 4713--4718.

\bibitem{Khosravi2019}
M.~Khosravi and R.~S. Smith, ``Kernel-based identification of positive
  systems,'' in \emph{Proceedings of the IEEE Conference on Decision and
  Control}, 2019, pp. 1740--1745.

\bibitem{Dallalibera2021}
A.~{Dalla Libera}, R.~Carli, and G.~Pillonetto, ``Kernel-based methods for
  {Volterra} series identification,'' \emph{Automatica}, vol. 129, p. 109686,
  2021.

\bibitem{Ljung2020}
L.~Ljung, T.~Chen, and B.~Mu, ``A shift in paradigm for system
  identification,'' \emph{International Journal of Control}, vol.~93, no.~2,
  pp. 173--180, 2020.

\bibitem{Gevers1995}
M.~Gevers, ``Identification for control,'' \emph{IFAC Proceedings Volumes},
  vol.~28, no.~13, pp. 1--12, 1995.

\bibitem{Lindqvist2001}
K.~Lindqvist and H.~Hjalmarsson, ``Identification for control: adaptive input
  design using convex optimization,'' in \emph{Proceedings of the IEEE
  Conference on Decision and Control}, vol.~5, 2001, pp. 4326--4331.

\bibitem{Bombois2004}
X.~Bombois, G.~Scorletti, M.~Gevers, R.~Hildebrand, and P.~{Van den Hof},
  ``Cheapest open-loop identification for control,'' in \emph{Proceedings of
  the IEEE Conference on Decision and Control}, vol.~1, 2004, pp. 382--387.

\bibitem{Gevers2005}
M.~Gevers, ``Identification for control: From the early achievements to the
  revival of experiment design,'' \emph{European Journal of Control}, vol.~11,
  no.~4, pp. 335--352, 2005.

\bibitem{Lohmiller1998}
W.~Lohmiller and J.-J.~E. Slotine, ``On contraction analysis for nonlinear
  systems,'' \emph{Automatica}, vol.~34, no.~6, pp. 683--696, 1998.

\bibitem{vanderSchaft2017}
A.~van~der Schaft, \emph{L2-Gain and Passivity Techniques in Nonlinear
  Control}, 3rd~ed.\hskip 1em plus 0.5em minus 0.4em\relax Springer, 2017.

\bibitem{Riesz1956}
F.~Riesz and B.~{Sz.-Nazy}, \emph{Functional analysis}, {E}nglish translation
  of 2nd {F}rench~ed.\hskip 1em plus 0.5em minus 0.4em\relax Blackie and Son,
  1956.

\bibitem{Veenman2013}
J.~Veenman and C.~W. Scherer, ``Stability analysis with integral quadratic
  constraints: A dissipativity based proof,'' in \emph{Proceedings of the IEEE
  Conference on Decision and Control}, 2013, pp. 3770--3775.

\bibitem{Lessard2016}
L.~Lessard, B.~Recht, and A.~Packard, ``Analysis and design of optimization
  algorithms via integral quadratic constraints,'' \emph{SIAM Journal on
  Optimization}, vol.~26, no.~1, pp. 57--95, 2016.

\bibitem{Rockafellar2009}
R.~Rockafellar and R.~J.-B. Wets, \emph{Variational Analysis}, 3rd~ed.\hskip
  1em plus 0.5em minus 0.4em\relax Heidelberg, Berlin, New York: Springer
  Verlag, 2009.

\bibitem{Kawano2020}
Y.~Kawano, B.~Besselink, J.~M.~A. Scherpen, and M.~Cao, ``Data-driven model
  reduction of monotone systems by nonlinear {DC} gains,'' \emph{IEEE
  Transactions on Automatic Control}, vol.~65, no.~5, pp. 2094--2106, 2020.

\bibitem{Revay2021}
M.~Revay, J.~Umenberger, and I.~R. Manchester, ``Distributed identification of
  contracting and/or monotone network dynamics,''
  \emph{https://arxiv.org/abs/2107.14309}, 2021.

\bibitem{Hodgkin1952}
A.~L. Hodgkin and A.~F. Huxley, ``A quantitative description of membrane
  current and its application to conduction and excitation in nerve,''
  \emph{The Journal of Physiology}, vol. 117, no.~4, pp. 500--544, 1952.

\bibitem{Diehl2011}
M.~Diehl, R.~Amrit, and J.~B. Rawlings, ``A {L}yapunov function for economic
  optimizing model predictive control,'' \emph{IEEE Transactions on Automatic
  Control}, vol.~56, no.~3, pp. 703--707, 2011.

\bibitem{Muller2015}
M.~A. M\"uller, L.~Gr\"une, and F.~Allg\"ower, ``On the role of dissipativity
  in economic model predictive control,'' \emph{IFAC-PapersOnLine}, vol.~48,
  no.~23, pp. 110--116, 2015.

\bibitem{Blanken2020}
L.~Blanken and T.~Oomen, ``Kernel-based identification of non-causal systems
  with application to inverse model control,'' \emph{Automatica}, vol. 114, p.
  108830, 2020.

\bibitem{Hamzi2021}
B.~Hamzi and H.~Owhadi, ``Learning dynamical systems from data: A simple
  cross-validation perspective, part {I}: Parametric kernel flows,''
  \emph{Physica D: Nonlinear Phenomena}, vol. 421, p. 132817, 2021.

\bibitem{Boser1992}
B.~E. Boser, I.~M. Guyon, and V.~N. Vapnik, ``A training algorithm for optimal
  margin classifiers,'' in \emph{Proceedings of the Annual Workshop on
  Computational Learning Theory}, 1992, pp. 144--152.

\bibitem{Suykens1999}
J.~A.~K. Suykens and J.~Vandewalle, ``Least squares support vector machine
  classifiers,'' \emph{Neural Processing Letters}, vol.~9, no.~3, pp. 293--300,
  Jun. 1999.

\bibitem{Bollobas1999}
B.~Bollob\'as, \emph{Linear Analysis: An Introductory Course}, 2nd~ed.\hskip
  1em plus 0.5em minus 0.4em\relax Cambridge University Press, 1999.

\bibitem{Saeks1970}
R.~Saeks, ``Causality in {H}ilbert space,'' \emph{SIAM Review}, vol.~12, no.~3,
  pp. 357--383, 1970.

\bibitem{Willems1972b}
J.~C. Willems, ``Dissipative dynamical systems part {II}: Linear systems with
  quadratic supply rates,'' \emph{Archive for Rational Mechanics and Analysis},
  vol.~45, pp. 352--393, 1972.

\bibitem{Boyd1994}
S.~Boyd, L.~E. Ghaoui, E.~Feron, and V.~Balakrishnan, \emph{Linear Matrix
  Inequalities in System and Control Theory}, ser. Studies in Applied
  Mathematics.\hskip 1em plus 0.5em minus 0.4em\relax Society for Industrial
  and Applied Mathematics, 1994.

\bibitem{Willems1972}
J.~C. Willems, ``Dissipative dynamical systems part {I}: General theory,''
  \emph{Archive for Rational Mechanics and Analysis}, vol.~45, pp. 321--351,
  1972.

\bibitem{Nijmeijer1990}
H.~Nijmeijer and A.~{van der Schaft}, \emph{Nonlinear Dynamical Control
  Systems}.\hskip 1em plus 0.5em minus 0.4em\relax Berlin, Heidelberg:
  Springer-Verlag, 1990.

\bibitem{Sepulchre1997}
R.~Sepulchre, M.~Jankovic, and P.~V. Kokotovic, \emph{Constructive Nonlinear
  Control}, ser. Communications and Control Engineering.\hskip 1em plus 0.5em
  minus 0.4em\relax Springer London, 1997.

\bibitem{Isidori1985}
A.~Isidori, \emph{Nonlinear Control Systems: An Introduction}, ser. Lecture
  notes in control and information sciences.\hskip 1em plus 0.5em minus
  0.4em\relax Springer-Verlag, 1985.

\bibitem{Khalil2002}
H.~K. Khalil, \emph{Nonlinear Systems}, ser. Pearson Education.\hskip 1em plus
  0.5em minus 0.4em\relax Prentice Hall, 2002.

\bibitem{Ljung1978}
L.~Ljung, ``Convergence analysis of parametric identification methods,''
  \emph{IEEE Transactions on Automatic Control}, vol.~23, no.~5, pp. 770--783,
  1978.

\bibitem{Westwick1996}
D.~Westwick and M.~Verhaegen, ``Identifying {MIMO} wiener systems using
  subspace model identification methods,'' \emph{Signal Processing}, vol.~52,
  no.~2, pp. 235--258, 1996.

\bibitem{Ljung1999}
L.~Ljung, \emph{System Identification: Theory for the User}, ser. Prentice Hall
  information and system sciences series.\hskip 1em plus 0.5em minus
  0.4em\relax Prentice Hall PTR, 1999.

\bibitem{Favoreel1999}
W.~Favoreel, B.~{De Moor}, and P.~{Van Overschee}, ``Subspace identification of
  bilinear systems subject to white inputs,'' \emph{IEEE Transactions on
  Automatic Control}, vol.~44, no.~6, pp. 1157--1165, 1999.

\bibitem{Palanthandalam-Madapusi2005}
H.~J. Palanthandalam-Madapusi, S.~Lacy, J.~B. Hoagg, and D.~S. Bernstein,
  ``Subspace-based identification for linear and nonlinear systems,'' in
  \emph{Proceedings of the American Control Conference}, 2005, pp. 2320--2334.

\bibitem{Padoan2015}
A.~Padoan and A.~Astolfi, ``Towards deterministic subspace identification for
  autonomous nonlinear systems,'' in \emph{Proceedings of the IEEE Conference
  on Decision and Control}, 2015, pp. 127--132.

\bibitem{Maciejowski1995}
J.~M. Maciejowski, ``Guaranteed stability with subspace methods,''
  \emph{Systems \& Control Letters}, vol.~26, no.~2, pp. 153--156, 1995.

\bibitem{Rapisarda2011}
P.~Rapisarda and H.~L. Trentelman, ``Identification and data-driven model
  reduction of state-space representations of lossless and dissipative systems
  from noise-free data,'' \emph{Automatica}, vol.~47, no.~8, pp. 1721--1728,
  2011.

\bibitem{Guta2016}
M.~Guta and N.~Yamamoto, ``System identification for passive linear quantum
  systems,'' \emph{IEEE Transactions on Automatic Control}, vol.~61, no.~4, pp.
  921--936, 2016.

\bibitem{Sivaranjan2019}
E.~A. S.~Sivaranjan~and and V.~Gupta, ``Data-driven identification of
  dissipative linear models for nonlinear systems,''
  \emph{https://arxiv.org/abs/1907.12640}, 2019.

\bibitem{Chaffey2021}
T.~Chaffey and R.~Sepulchre, ``Monotone {RLC} circuits,''
  \emph{https://arxiv.org/abs/2012.11533}, 2021.

\bibitem{Tsuzuku2018}
Y.~Tsuzuku, I.~Sato, and M.~Sugiyama, ``Lipschitz-margin training: Scalable
  certification of perturbation invariance for deep neural networks,'' in
  \emph{Advances in Neural Information Processing Systems}, vol.~31, 2018.

\bibitem{Fazlyab2019}
M.~Fazlyab, A.~Robey, H.~Hassani, M.~Morari, and G.~Pappas, ``Efficient and
  accurate estimation of {L}ipschitz constants for deep neural networks,'' in
  \emph{Advances in Neural Information Processing Systems}, vol.~32, 2019.

\bibitem{Revay2020}
M.~Revay, R.~Wang, and I.~R. Manchester, ``Lipschitz bounded equilibrium
  networks,'' \emph{https://arxiv.org/abs/2010.01732}, 2020.

\bibitem{Pauli2022}
P.~Pauli, A.~Koch, J.~Berberich, P.~Kohler, and F.~Allg\"ower, ``Training
  robust neural networks using {L}ipschitz bounds,'' \emph{IEEE Control Systems
  Letters}, vol.~6, pp. 121--126, 2022.

\bibitem{Montenbruck2016}
J.~M. Montenbruck and F.~Allg\"ower, ``Some problems arising in controller
  design from big data via input-output methods,'' in \emph{Proceedings of the
  IEEE Conference on Decision and Control}, 2016, pp. 6525--6530.

\bibitem{Romer2017}
A.~Romer, J.~M. Montenbruck, and F.~Allg\"ower, ``Determining dissipation
  inequalities from input-output samples,'' \emph{IFAC-PapersOnLine}, vol.~50,
  no.~1, pp. 7789--7794, 2017.

\bibitem{Micchelli2004}
C.~A. Micchelli and M.~Pontil, ``Kernels for multi-task learning,'' ser.
  Proceedings of Advances in Neural Information Processing Systems, 2004, pp.
  921--928.

\bibitem{Carmeli2006}
C.~Carmeli, E.~{De Vito}, and A.~Toigo, ``Vector valued reproducing kernel
  {H}ilbert spaces of integrable functions and {M}ercer theorem,''
  \emph{Analysis and Applications}, vol.~4, no.~4, pp. 377--408, 2006.

\bibitem{Caponnetto2008}
A.~Caponnetto, C.~A. Micchelli, M.~Pontil, and Y.~Ying, ``Universal multi-task
  kernels,'' \emph{Journal of Machine Learning Research}, vol.~9, no.~52, pp.
  1615--1646, 2008.

\bibitem{Kadri2016}
H.~Kadri, E.~Duflos, P.~Preux, S.~Canu, A.~Rakotomamonjy, and J.~Audiffren,
  ``Operator-valued kernels for learning from functional response data,''
  \emph{Journal of Machine Learning Research}, vol.~17, no.~20, pp. 1--54,
  2016.

\bibitem{Akhiezer2013}
N.~I. Akhiezer and I.~M. Glazman, \emph{Theory of Linear Operators in Hilbert
  Spaces, Volume I}, ser. Dover Books on Mathematics.\hskip 1em plus 0.5em
  minus 0.4em\relax Dover Publications (reprint), 2013.

\bibitem{Kalnishkan2009}
Y.~Kalnishkan, ``An introduction to kernel methods,'' \emph{technical report,
  available at https://cml.rhul.ac.uk/publications/files/tr0901.pdf}, May 2009.

\bibitem{Alvarez2012}
M.~A. \'{A}lvarez, L.~Rosasco, and N.~D. Lawrence, ``Kernels for vector-valued
  functions: A review,'' \emph{Foundations and Trends in Machine Learning},
  vol.~4, no.~3, pp. 195--266, Mar. 2012.

\bibitem{Schoenberg1938}
I.~J. Schoenberg, ``Metric spaces and completely monotone functions,''
  \emph{Annals of Mathematics}, vol.~39, no.~4, pp. 811--841, 1938.

\end{thebibliography}

%\begin{IEEEbiography}[{\includegraphics[width=1in,height=1.25in,clip,keepaspectratio]{a1.png}}]{First A. Author} (M'76--SM'81--F'87)
%\end{IEEEbiography}

\end{document}